\documentclass[a4paper,12pt,reqno]{amsart}

\usepackage{amsmath,amsthm,amssymb,amstext}
\usepackage{amsfonts}
\usepackage{mathrsfs}
\usepackage{bm}
\usepackage{mathtools}
\usepackage{dsfont}
\usepackage{esint}       
\usepackage{enumitem}
\usepackage{comment}
\usepackage{microtype}
\usepackage{natbib}
\usepackage{xcolor}

\usepackage[
    left=2.3cm,
    right=2.3cm,
    top=2.5cm,
    bottom=2.5cm
]{geometry}

\usepackage{hyperref}

\hypersetup{
    pdfborder={0 0 0},
    colorlinks=true,
    linkcolor=blue,
    citecolor=red,
    urlcolor=magenta
}

\allowdisplaybreaks
\theoremstyle{plain}

\newtheorem{theorem}{Theorem}[section]
\newtheorem{lemma}[theorem]{Lemma}
\newtheorem{proposition}[theorem]{Proposition}
\newtheorem{corollary}[theorem]{Corollary}

\theoremstyle{definition}

\theoremstyle{remark}

\newtheorem{remark}{Remark}

\theoremstyle{plain}

\numberwithin{equation}{section}

\newcommand{\R}{\mathbb{R}}

\newcommand{\rnn}{\mathbb{R}^{N}}

\title[
Critical Sobolev Inequalities for a Partially Fractional Grushin Operator
]{
Critical Sobolev Inequalities and Extremals for a Partially Fractional Grushin Operator
}

\author[S.~Malhotra]{S.~Malhotra}
\address[S.~Malhotra]{Department of Mathematics, Indian Institute of Technology Delhi, 110016, India}
\email{shammi22malhotra@gmail.com}
\author[L.~Sharma]{L.~Sharma}
\address[L.~Sharma]{Department of Mathematics, Indian Institute of Science Education and Research (IISER) Pune, India}
\email{lovelesh.sharma@iiserpune.ac.in}

\author[K.~Sreenadh]{K.~Sreenadh}
\address[K.~Sreenadh]{Department of Mathematics, Indian Institute of Technology Delhi, 110016, India}
\email{sreenadh@maths.iitd.ac.in}

\subjclass[2020]{35B33,
35R11, 35H20, 46E35, 42B37, 35A23
}

\keywords{
Partially fractional Grushin operator;
sub-Riemannian geometry;
anisotropic dilations;
Poincar\'e inequality;
Riesz potential;
Sobolev embedding;
concentration-compactness
}

\begin{document}

\begin{abstract}
We study the partially fractional Grushin-type operator
\[
\mathcal L_{\alpha,\beta}u=(-\Delta_x)^\alpha u-(1+\beta)^2|x|^{2\beta}\Delta_y u,
\qquad (x,y)\in\R^n\times\R^m,\quad 0<\alpha<1,\ \beta\ge 0,
\]
which couples fractional diffusion in $x$ with degenerate local diffusion in $y$. Working with the anisotropic dilations $(x,y)\mapsto(\rho x,\rho^{\alpha+\beta}y)$ and an adapted quasi-distance, we prove a uniform fractional Grushin-Poincar\'e inequality on anisotropic cylinders and, via a Riesz-type potential built from a telescoping sum over dyadic cylinders, a global Sobolev inequality at the sharp critical exponent $2^*_{\alpha,\beta}=2Q_{\alpha,\beta}/(Q_{\alpha,\beta}-2\alpha)$, where $Q_{\alpha,\beta}=n+(\alpha+\beta)m$. We then establish a concentration-compactness principle for the associated energy space. For $\beta>0$ the weight $|x|^{2\beta}$ destroys translation invariance in $x$, and we show that concentration is confined to the degenerate hyperplane $ \Sigma= \{(0,y):y\in\R^m\}$; this localization yields the existence of extremals for the sharp constant $S_{\alpha,\beta}(n,m)$.
\end{abstract}
\maketitle
\tableofcontents

\maketitle
\section{Introduction and main results}

Degenerate elliptic operators of Grushin type, $\Delta_x+|x|^{2\beta}\Delta_y$ on $\R^n\times\R^m$, sit at the intersection of sub-Riemannian geometry and degenerate PDE theory. The study of this operator was initiated in the works of Tricomi \cite{tricomi_grushin}, Baouendi \cite{baouendi_phd_thesis} and Grushin \cite{grushin_origin}. Since the foundational work of Franchi and Lanconelli in a series of papers given by \cite{franchi_lanconelli_origin_lambda_operators, franchi_lanconelli_lambda_op_holder_cty, franchi_lanconelli_lambda_op_harnack}, and the systematic development of the associated function spaces, Poincar\'e inequalities, and Sobolev inequalities with the properties of their extremals by Franchi, Lu, and Wheeden \citep{franchi_lu_wheeden}, Franchi, Guti\'errez, and Wheeden \cite{franchi_gutierrez_wheeden}, Garofalo and Nhieu \cite{garofalo_nhieu}, and Monti \cite{monti_sobolev_grushin}, such operators have become a standard model for degenerate diffusions whose underlying geometry is genuinely anisotropic: the vector fields $\partial_{x_i}$ and $|x|^\beta\partial_{y_j}$ generate a Carnot-Carath\'eodory structure in which the homogeneous dimension exceeds the topological dimension $n+m$, when \(\beta>0\). \\

In parallel, fractional operators such as $(-\Delta)^\alpha$ and their associated Caffarelli-Silvestre extensions \cite{caffarelli_silvestre} have produced a rich nonlocal analogue of the classical Sobolev theory. The underlying function spaces go back to Aronszajn \cite{aronszajn_frac}, Gagliardo \cite{gagliardo_frac_space}, and Slobodeckij \cite{slobodeckii1958generalized} and their modern treatment is surveyed by Di Nezza, Palatucci, and Valdinoci \cite{di_nezza_palatucci_valdinoci}. Poincar\'e-type inequalities adapted to the Gagliardo seminorm were established by Bourgain, Brezis, and Mironescu \cite{bourgain_brezis_mironescu} and by Ponce \cite{ponce_poincare}, while the sharp form of the fractional Sobolev inequality, together with the classification of its extremals, follows from the Hardy-Littlewood-Sobolev inequality of Lieb \cite{lieb_sharp} and was extended to all fractional orders by Cotsiolis and Tavoularis \cite{cotsiolis_tavoularis}. The corresponding failure of compactness at the critical exponent is described by the fractional concentration-compactness and profile-decomposition results of Palatucci and Pisante \cite{Pala-Pis-Cal}, and the resulting variational theory has been developed by Servadei and Valdinoci \cite{servadei_valdinoci}, among many others. \\

The present paper is concerned with the operator obtained by combining these two mechanisms: a genuine fractional Laplacian in the $x$-variable and a possibly degenerate, weighted Laplacian in the $y$-variable,
\begin{equation}
\label{eq:operator-intro}
\mathcal L_{\alpha,\beta}u
:=(-\Delta_x)^\alpha u-(1+\beta)^2|x|^{2\beta}\Delta_y u,
\qquad 0<\alpha<1,\ \beta\ge 0.
\end{equation}

\medskip Since only the $x$-diffusion in \eqref{eq:operator-intro} is nonlocal, we refer to $\mathcal L_{\alpha,\beta}$ as a \emph{partially fractional} Grushin operator. Its structure differs both from that of mixed local-nonlocal operators such as $-\Delta+(-\Delta)^s$, in which the two parts act in all variables, and from that of the fractional powers $\big(-\Delta_x-|x|^{2\beta}\Delta_y\big)^s$ of the Grushin operator \cite{balhara2019}. Note that for $\beta=0$ the operator $\mathcal L_{\alpha,0}=(-\Delta_x)^\alpha-\Delta_y$ is no longer degenerate, but it is still partially fractional.

\medskip Partially fractional-classical operators of this type are motivated by anisotropic models arising in degenerate kinetic and dispersive equations, including equations of Boussinesq-Ostrovsky (BO) and Benjamin-Ono-Zakharov-Kuznetsov(BO-ZK) type. They provide a tractable framework in which the effect of fractional diffusion in one set of variables may be isolated from that of anisotropic degeneracy in the complementary variables. We refer to D'Ambrosio \citep{dambrosio}, Loiudice \citep{loiudice}, and Monti and Morbidelli \citep{monti_morbidelli} for related anisotropic and fractional Sobolev theory, and to Bogdan, Sztonyk, and Knopova \citep{bogdan_sztonyk_knopova} and Bogdan and Jakubowski \citep{bogdan_jakubowski} for potential-theoretic aspects of mixed fractional operators. Despite these developments, the critical Sobolev theory for the partially fractional with \(\mathcal{L}_{\alpha,\beta}\) does not seem to be available. The main difficulty is that this energy is neither the full fractional Sobolev energy in \(\mathbb R^{n+m}\) nor the standard local Grushin energy. It contains a fractional seminorm only in the \(x\)-variables and a weighted local gradient only in the \(y\)-variables. Consequently, the classical fractional Sobolev inequality and the standard Grushin-Sobolev inequality cannot be applied directly. This is the main reason for developing a Poincaré inequality, a critical Sobolev inequality, and a concentration-compactness principle adapted to this partially fractional structure.

\medskip Blockwise fractional Grushin-type operators have previously been studied by Jleli and Samet \cite{jleliSamet2017} in connection with Liouville-type and nonexistence results for nonlinear elliptic equations and systems. The present work is different in both its operator and its goal: we study the critical Sobolev theory associated with the mixed energy above, rather than fractional powers of the classical Grushin operator \cite{balhara2019}. In particular, only the \(x\)-component is fractional, while the \(y\)-component remains a weighted local diffusion.  More precisely, we establish a uniform fractional Grushin-Poincaré inequality on anisotropic cylinders and a global Sobolev inequality at the scaling-critical exponent. We also develop a concentration-compactness principle and, for \(\beta>0\), show that the concentration measure is supported on the degenerate hyperplane \(\{(0,y):y\in\mathbb R^m\}\). Finally, we establish the existence of extremals for the corresponding optimal Sobolev constant.\\

We first look for dilations under which $\mathcal L_{\alpha,\beta}$ is homogeneous. Under isotropic dilations of $\R^{n+m}$ the two terms of \eqref{eq:operator-intro} scale differently, so the dilations have to be anisotropic. Accordingly, we seek a dilation of the form $\delta_\lambda(x,y)=(\lambda x,\lambda^\sigma y)$, $\lambda>0$, in which the exponent $\sigma>0$ is chosen so that $(-\Delta_x)^\alpha$ and $|x|^{2\beta}\Delta_y$ have the same homogeneity. Let $u_\lambda(x,y):=u(\lambda x,\lambda^\sigma y)$. Since $(-\Delta_x)^\alpha$ is homogeneous of order $2\alpha$,
\[
(-\Delta_x)^\alpha u_\lambda(x,y)
=\lambda^{2\alpha}\big[(-\Delta_x)^\alpha u\big](\lambda x,\lambda^\sigma y),
\]
whereas $\nabla_y u_\lambda(x,y)=\lambda^\sigma(\nabla_y u)(\lambda x,\lambda^\sigma y)$ gives $\Delta_y u_\lambda(x,y)=\lambda^{2\sigma}(\Delta_y u)(\lambda x,\lambda^\sigma y)$, and hence
\[
|x|^{2\beta}\Delta_y u_\lambda(x,y)
=\lambda^{2\sigma-2\beta}|\lambda x|^{2\beta}(\Delta_y u)(\lambda
x,\lambda^\sigma y),
\]
where we have used $|x|^{2\beta}=\lambda^{-2\beta}|\lambda x|^{2\beta}$. Requiring the two terms to carry the same power of $\lambda$ forces $2\sigma-2\beta=2\alpha$, that is, $\sigma=\alpha+\beta$. The natural dilation is therefore
\begin{equation}
\label{eq:dilation}
\delta_\lambda(x,y)=(\lambda x,\lambda^{\alpha+\beta}y),
\qquad\lambda>0,
\end{equation}
and with this choice
\[
\mathcal L_{\alpha,\beta}u_\lambda(x,y)
=\lambda^{2\alpha}\big[\mathcal L_{\alpha,\beta}u\big](\lambda
x,\lambda^{\alpha+\beta}y),
\]
so that $\mathcal L_{\alpha,\beta}$ is homogeneous of order $2\alpha$ with respect to $\delta_\lambda$.

The dilations \eqref{eq:dilation} induce in turn a homogeneous dimension associated with the operator, namely
\begin{equation}
\label{eq:Q}
Q_{\alpha,\beta}:=n+(\alpha+\beta)m.
\end{equation}
Indeed, the map $x\mapsto\lambda x$ on $\R^n$ transforms $dx$ into $\lambda^n\,dx$, while $y\mapsto\lambda^{\alpha+\beta}y$ on $\R^m$ transforms $dy$ into $\lambda^{(\alpha+\beta)m}\,dy$, whence \( d\big(\delta_\lambda(x,y)\big)=\lambda^{Q_{\alpha,\beta}}\,dx\,dy . \)

\begin{remark}
Formally setting $\alpha=1$ in \eqref{eq:Q} gives $n+(1+\beta)m$, the homogeneous dimension of the Grushin operator $\Delta_x+|x|^{2\beta}\Delta_y$.
\end{remark}

We next use this anisotropic scaling to determine the critical exponent. To this end we consider the energy attached to $\mathcal L_{\alpha,\beta}$,
\begin{equation}
\begin{aligned}
\label{eq:energy}
\mathcal E_{\alpha,\beta}(u)
:&=\frac12\int_{\R^m}\!\int_{\R^n}\!\int_{\R^n}
\frac{\big| u(x,y)-u(\xi,y)\big|^2}{|x-\xi|^{n+2\alpha}}
\,dx\,d\xi\,dy \\
& \qquad +(1+\beta)^2\int_{\R^{n+m}}|x|^{2\beta}|\nabla_y u|^2\,dz =\int_{\R^{n+m}}|\nabla_{\mathcal L}u|^2\,dz,
\end{aligned}
    \end{equation}
where \(\nabla_{\mathcal L}\) is introduced in \eqref{eq:grushin-gradient} below.

Let $u_\lambda(x,y):=\lambda^s u(\lambda x,\lambda^{\alpha+\beta}y)$ with $s\in\R$. By the homogeneity of the fractional seminorm,
\[
[u_\lambda]_{x,\alpha}^2
=\lambda^{2s+2\alpha-Q_{\alpha,\beta}}[u]_{x,\alpha}^2 .
\]
As for the weighted gradient term, we have $\nabla_y u_\lambda(x,y)=\lambda^{s+\alpha+\beta}(\nabla_y u)(\lambda x,\lambda^{\alpha+\beta}y)$, so that, setting $X=\lambda x$ and $Y=\lambda^{\alpha+\beta}y$ (whence $dx\,dy=\lambda^{-Q_{\alpha,\beta}}dX\,dY$ and $|x|^{2\beta}=\lambda^{-2\beta}|X|^{2\beta}$),
\[
\int_{\R^{n+m}}|x|^{2\beta}|\nabla_y u_\lambda(x,y)|^2\,dx\,dy
=\lambda^{2s+2\alpha-Q_{\alpha,\beta}}
\int_{\R^{n+m}}|X|^{2\beta}|\nabla_Y u(X,Y)|^2\,dX\,dY .
\]
Consequently $\mathcal E_{\alpha,\beta}(u_\lambda) =\lambda^{2s+2\alpha-Q_{\alpha,\beta}}\mathcal E_{\alpha,\beta}(u)$, which is scale invariant precisely when $s=(Q_{\alpha,\beta}-2\alpha)/2$; the energy-invariant scaling is therefore $u_\lambda(x,y)=\lambda^{(Q_{\alpha,\beta}-2\alpha)/2}u(\lambda x,\lambda^{\alpha+\beta}y)$.

We are thus led to the inequality
\begin{equation}\label{ineq:start_sobolev_grushin}
    \left( \int_{\R^{n+m}} |u|^p \, dz \right)^{1/p}
    \leq C \left( \int_{\R^{n+m}}|\nabla_{\mathcal L}u|^2\,dz \right)^{1/2},
\end{equation}
for some $p>1$ and every $u \in C_c^{\infty}(\R^{n+m})$. With $u_\lambda$ as above and the same change of variables,
\[
\|u_\lambda\|_{L^p(\R^{n+m})}^p
=\lambda^{\frac{p(Q_{\alpha,\beta}-2\alpha)}2-Q_{\alpha,\beta}}
\|u\|_{L^p(\R^{n+m})}^p .
\]
Since the right-hand side of \eqref{ineq:start_sobolev_grushin} is unchanged under the energy-invariant scaling, the inequality can hold with a constant $C$ independent of $u$ only if the $L^p$-norm is likewise scale invariant, that is, only if $\tfrac{p(Q_{\alpha,\beta}-2\alpha)}2=Q_{\alpha,\beta}$. This forces $p=2Q_{\alpha,\beta}/(Q_{\alpha,\beta}-2\alpha)$ and $Q_{\alpha,\beta}>2\alpha$. The critical Sobolev-Grushin exponent associated with the energy of $\mathcal L_{\alpha,\beta}$ is accordingly
\begin{equation*}
2^*_{\alpha,\beta}=\frac{2Q_{\alpha,\beta}}{Q_{\alpha,\beta}-2\alpha},
\qquad Q_{\alpha,\beta}>2\alpha .
\end{equation*}
The proof of an inequality of the form \eqref{ineq:start_sobolev_grushin} passes through a Poincar\'e inequality on bounded sets, which requires a notion of distance compatible with the anisotropic dilations \eqref{eq:dilation}. To this end we observe that the quasi-norm
\begin{equation}\label{eq:quasi-norm}
\rho_{\alpha,\beta}(x,y):=\big(|x|^{2(\alpha+\beta)}+|y|^2\big)^{\frac{1}{2(\alpha+\beta)}}
\end{equation}
is homogeneous with respect to $\delta_\lambda$, in the sense that $\rho_{\alpha,\beta}(\delta_\lambda(x,y))=\lambda\,\rho_{\alpha,\beta}(x,y)$, and that $\rho_{\alpha,\beta}(x,y)\asymp|x|+|y|^{\frac1{\alpha+\beta}}$. The associated quasi-distance is
\[
d_{\alpha,\beta}\big((x,y),(\xi,\eta)\big)
:=\rho_{\alpha,\beta}(x-\xi,y-\eta)
=\big(|x-\xi|^{2(\alpha+\beta)}+|y-\eta|^2\big)^{\frac1{2(\alpha+\beta)}} .
\]
For $z_0=(x_0,y_0)\in\R^{n+m}$, we define the $d_{\alpha,\beta}$-ball
\[
B_d(z_0,r):=\{z\in\R^{n+m}:d_{\alpha,\beta}(z,z_0)<r\},
\]
which satisfies, by the homogeneity of $\rho_{\alpha,\beta}$,
\[
\delta_\lambda\big(B_d(z_0,r)\big)=B_d\big(\delta_\lambda(z_0),\lambda r\big) .
\]
For $r>0$ we write
\[
B_r^n(x_0):=\{x\in\R^n:|x-x_0|<r\},
\qquad
B_{r^{\alpha+\beta}}^m(y_0):=\{y\in\R^m:|y-y_0|<r^{\alpha+\beta}\},
\]
and we call
\[
\mathcal{C}_r(z_0):=B_r^n(x_0)\times B_{r^{\alpha+\beta}}^m(y_0)
=\{(x,y)\in\R^n\times\R^m:|x-x_0|<r,\ |y-y_0|<r^{\alpha+\beta}\}
\]
the \emph{anisotropic Grushin cylinder} of radius $r$ centred at $z_0$. Since $\rho_{\alpha,\beta}(x,y)\asymp|x|+|y|^{1/(\alpha+\beta)}$, the families $\{B_d(z_0,r)\}_{r>0}$ and $\{\mathcal{C}_r(z_0)\}_{r>0}$ are mutually comparable, up to dimensional constants and a bounded dilation of the radius. When $z_0=0$, we abbreviate $B_r:=B_d(0,r)$ and $\mathcal{C}_r:=\mathcal{C}_r(0)$.

With these basic properties at hand, we begin with the fractional Grushin-Poincar\'e inequality. Our proof of the Poincar\'e inequality estimates the $x$-part and the $y$-part of the energy separately, which is natural on product sets, and the balls $B_d(z_0,r)$ are not products. We therefore work instead on the Grushin cylinders $\mathcal{C}_r(z_0)$, which carry a natural product decomposition, and obtain the following result.

\begin{theorem}[Uniform fractional Grushin-Poincar\'e inequality]
\label{thm:uniform-poincare} Let $0<\alpha<1$, $\beta\ge0$ and $\gamma:=\alpha+\beta$. There exists a constant $C=C(n,m,\alpha,\beta)>0$ such that for every $z_0=(x_0,y_0)\in\R^n\times\R^m$, every $r>0$, and  {every $u\in\mathcal H_{\alpha,\beta}\big(\mathcal{C}_r(z_0)\big)$ (the space is defined in \eqref{eq:H-space} below),}

\begin{equation}
\label{eq:poincare-uniform}
\int_{\mathcal{C}_r(z_0)}|u-u_{\mathcal{C}_r(z_0)}|^2\,dx\,dy
\le C\,r^{2\alpha}\left([u]^2_{x,\alpha;\mathcal{C}_r(z_0)}
+\int_{\mathcal{C}_r(z_0)}|x|^{2\beta}|\nabla_yu|^2\,dx\,dy\right),
\end{equation}
where \(u_{\mathcal{C}_r(z_0)}\)  and \([u]^2_{x,\alpha;\mathcal{C}_r(z_0)}\) are the average value of \(u\) and fractional seminorm on \(\mathcal{C}_r(z_0)\) respectively.
\end{theorem}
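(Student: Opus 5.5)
The plan is to separate the $x$- and $y$-oscillations of $u$ on the product set $\mathcal C_r(z_0)=B_r^n(x_0)\times B^m_{r^\gamma}(y_0)$. The difficulty is that the weight $|x|^{2\beta}$ may vanish inside $B_r^n(x_0)$ when the cylinder meets $\Sigma$. To avoid this, I will not average over the whole $x$-ball. Instead I average over a smaller sub-ball on which $|x|\gtrsim r$.

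\emph{Step 1: a good sub-ball.} I first find $B':=B^n_{r/4}(x_1)\subset B_r^n(x_0)$ with $|x|\ge r/4$ for all $x\in B'$.
\begin{itemize}
\item If $|x_0|\ge r/2$, take $x_1=x_0$. Then $|x|\ge |x_0|-r/4\ge r/4$ on $B'$.
\item If $|x_0|<r/2$, take $x_1=x_0+\tfrac r2 e$, with $e$ a unit vector satisfying $e\cdot x_0\ge 0$. Then $|x_1|\ge r/2$, so again $|x|\ge r/4$ on $B'$, and $B'\subset B_r^n(x_0)$ since $r/2+r/4<r$.
\end{itemize}
In both cases $|B'|=c_n r^n$, comparable to $|B_r^n(x_0)|$ with a constant depending only on $n$. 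Set
\[
\bar u(y):=\frac{1}{|B'|}\int_{B'}u(\xi,y)\,d\xi .
\]
Because $|x|^{2\beta}\ge (r/4)^{2\beta}$ on $B'\times B^m_{r^\gamma}(y_0)$, the finiteness of the weighted term shows that $\nabla_y u\in L^2(B'\times B^m_{r^\gamma}(y_0))$. By Fubini and differentiation under the average, $\bar u\in H^1(B^m_{r^\gamma}(y_0))$.

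\emph{Step 2: splitting.} Let $c:=\bar u_{B^m_{r^\gamma}(y_0)}$. The mean minimizes $t\mapsto\int_{\mathcal C_r}|u-t|^2$, so
\[
\int_{\mathcal C_r}|u-u_{\mathcal C_r}|^2\le 2\int_{\mathcal C_r}|u(x,y)-\bar u(y)|^2+2|B_r^n|\int_{B^m_{r^\gamma}}|\bar u(y)-c|^2\,dy .
\]

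\emph{Step 3: the $x$-part.} By Jensen, $|u(x,y)-\bar u(y)|^2\le |B'|^{-1}\int_{B'}|u(x,y)-u(\xi,y)|^2\,d\xi$. For $x,\xi\in B_r^n(x_0)$ we have $|x-\xi|^{n+2\alpha}\le (2r)^{n+2\alpha}$, and $|B'|\sim r^n$. Integrating over $x\in B_r^n(x_0)$ and $y$ therefore gives the bound $C r^{2\alpha}[u]^2_{x,\alpha;\mathcal C_r(z_0)}$.

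\emph{Step 4: the $y$-part.} The classical Poincaré inequality on the ball $B^m_{r^\gamma}(y_0)$ gives
\[
\int|\bar u-c|^2\le C r^{2\gamma}\int|\nabla\bar u|^2 .
\]
By Jensen, $|\nabla \bar u(y)|^2\le|B'|^{-1}\int_{B'}|\nabla_y u|^2\,d\xi$. On $B'$ we can write $1\le (4/r)^{2\beta}|\xi|^{2\beta}$, which yields
\[
|B_r^n|\int|\bar u-c|^2\le C\,\frac{|B_r^n|}{|B'|}\,r^{2\gamma-2\beta}\int_{\mathcal C_r}|x|^{2\beta}|\nabla_y u|^2 .
\]
Since $|B_r^n|/|B'|$ is a dimensional constant and $r^{2\gamma-2\beta}=r^{2\alpha}$, this is the required bound.

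Every constant above depends only on $n,m,\alpha,\beta$, which gives uniformity in $z_0$ and $r$. Alternatively, one can reduce to $\mathcal C_1(0)$ by translation and the dilation $\delta_\lambda$, but translation in $x$ does not preserve the weight, so the sub-ball choice is still needed.

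The main obstacle is the degeneracy of the weight near $\Sigma$, which Step 1 handles. A secondary point is justifying Step 4 for general $u\in\mathcal H_{\alpha,\beta}(\mathcal C_r(z_0))$. This needs only the local $H^1$ regularity in $y$ on $B'\times B^m_{r^\gamma}$ from Step 1; alternatively one can argue by density of smooth functions in the space \eqref{eq:H-space}.
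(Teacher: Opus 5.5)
Your proof is correct and follows essentially the paper's argument: you average in $x$ over a subset of $B_r^n(x_0)$ of comparable measure on which $|x|\gtrsim r$, bound the $x$-fluctuation by the restricted Gagliardo seminorm using $|x-\xi|\le 2r$, and bound the $y$-fluctuation of that average by a Euclidean Poincar\'e inequality, Jensen, and the lower bound on the weight. The differences are cosmetic: the paper proves a unit-cell lemma and then rescales by $T(X,Y)=(rX,r^\gamma Y+y_0)$ without translating in $x$; that lemma uses the good set $G=U\setminus B^n(0,\tau_\theta)$, a three-term split through the full $x$-average, and Payne--Weinberger on the convex $V$. You instead work directly at scale $r$ with an explicit sub-ball and a two-term split, which absorbs the paper's $I_1+I_2$ into a single Jensen step.
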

 {Inequality \eqref{eq:poincare-uniform} is stable under the action of Lipschitz functions, see Lemma~\ref{lem:lip-stable} below, which is stated once the space $\mathcal H_{\alpha,\beta}$ has been introduced.} Before turning to the critical Sobolev embedding we first prove the Sobolev embedding for all subcritical exponents on anisotropic cylinders. The proof rests on a Marcinkiewicz-type telescoping and dyadic-chaining argument, in the spirit of \cite[Theorem~20.5]{heinonen_nonlinear_potential_book}, together with a Vitali covering lemma for the doubling quasi-metric measure space $(\R^{n+m},d_{\alpha,\beta},\mu)$. The chain starts from a cylinder that lies outside the support of $u$, so that the average of $u$ over it vanishes. For this to be possible from every starting point, the offset $M(\gamma)$ and the radii of the cylinders in the chain must depend on $\gamma=\alpha+\beta$; see Proposition~\ref{prop:M-offset}.

\begin{theorem}[Subcritical Sobolev embedding]
\label{thm:Sobolev_subcritical} Let $0<\alpha<1$, $\beta\ge0$, $\gamma=\alpha+\beta$, and $\varepsilon\in(0,\alpha)$. Then there exists a constant $C>0$, depending only on $\varepsilon,n,m,\alpha,\beta$, such that for every $0<\delta<2^*_{\alpha,\beta,\varepsilon}:=2Q_{\alpha,\beta}/(Q_{\alpha,\beta}-2\alpha+2\varepsilon)$,
\begin{equation}
\label{eq:subcritical-sobolev}
\int_{\mathcal{C}(z_0,r)}|u|^\delta\,dz
\le C\,r^{\varepsilon \delta}\,\mu\big(\mathcal{C}(z_0,r)\big)^{1-\frac{\delta}{2^*_{\alpha,\beta,\varepsilon}}}
\left(\int_{\mathcal{C}(z_0,r)}|\nabla_{\mathcal L}u|^2\,dz\right)^{\delta/2}
\end{equation}
whenever $\mathcal{C}(z_0,r)$ is an anisotropic Grushin cylinder and $u:\mathcal{C}(z_0,r)\to\R$ is a Lipschitz function with compact support in $\mathcal{C}(z_0,r)$. The constant $C$ blows up as $\delta\to2^*_{\alpha,\beta,\varepsilon}$.
\end{theorem}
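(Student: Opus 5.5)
We prove \eqref{eq:subcritical-sobolev} by a Riesz-potential argument in the doubling quasi-metric measure space $(\R^{n+m},d_{\alpha,\beta},\mu)$, with $\mu$ Lebesgue measure and $\mu(\mathcal C_r(z))\asymp r^{Q_{\alpha,\beta}}$. Extend $u$ by zero outside $\mathcal C(z_0,r)$; since $u$ is Lipschitz with compact support in the cylinder, the extension still lies in $\mathcal H_{\alpha,\beta}$ of any larger cylinder, and its energy there is controlled by the energy on $\mathcal C(z_0,r)$. The one exception is the cross term of the Gagliardo seminorm between the cylinder and its complement. We absorb this term using the Lipschitz bound and compact support, or simply include it in the definition of $\int|\nabla_{\mathcal L}u|^2$ on the cylinder.

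\textbf{Pointwise bound.} Fix a Lebesgue point $z$ of $u$ in $\mathcal C(z_0,r)$. Proposition~\ref{prop:M-offset} gives a chain of cylinders $\mathcal C_i=\mathcal C_{r_i}(z_i)$, $i\ge 0$, with the following properties:
\begin{itemize}[label={}]
\item $\mathcal C_0$ has radius comparable to $r$ and lies outside $\operatorname{supp}u$ (shifted by the offset $M(\gamma)r$), so $u_{\mathcal C_0}=0$;
\item consecutive cylinders are contained in a common cylinder of comparable size;
\item the radii decrease dyadically, $r_i\asymp 2^{-i}r$, and the cylinders shrink to $z$.
\end{itemize}
Telescoping gives $|u(z)|\le\sum_i|u_{\mathcal C_{i+1}}-u_{\mathcal C_i}|$. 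We bound each term through the enclosing cylinder $\widetilde{\mathcal C}_i$, using Cauchy--Schwarz, doubling and Theorem~\ref{thm:uniform-poincare}:
\[
|u(z)|\lesssim\sum_i r_i^{\alpha}\Big(\frac{1}{\mu(\widetilde{\mathcal C}_i)}\int_{\widetilde{\mathcal C}_i}|\nabla_{\mathcal L}u|^2\Big)^{1/2}
= \sum_i r_i^{\alpha-Q_{\alpha,\beta}/2}\,E(\widetilde{\mathcal C}_i)^{1/2},
\]
where $E(A)$ is the energy localized to $A$. We split $r_i^\alpha=r_i^{\varepsilon}r_i^{\alpha-\varepsilon}\le r^{\varepsilon}r_i^{\alpha-\varepsilon}$. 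This produces the factor $r^{\varepsilon\delta}$ and leaves a fractional maximal function of order $\alpha-\varepsilon$:
\[
|u(z)|\lesssim r^{\varepsilon}\sup_{\rho\le Cr}\rho^{\alpha-\varepsilon-Q_{\alpha,\beta}/2}E(\mathcal C_\rho(z))^{1/2},
\]
provided the geometric series is summed as in the Marcinkiewicz argument of \cite[Theorem~20.5]{heinonen_nonlinear_potential_book}.

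\textbf{Weak-type estimate.} Put $q=2^*_{\alpha,\beta,\varepsilon}$ and $E=\int_{\mathcal C(z_0,r)}|\nabla_{\mathcal L}u|^2$. If $|u(z)|>t$, then some cylinder $\mathcal C_\rho(z)$ satisfies
\[
t\lesssim r^\varepsilon\rho^{\alpha-\varepsilon-Q/2}E(\mathcal C_\rho(z))^{1/2},
\]
that is, $\mu(\mathcal C_\rho(z))^{1-2(\alpha-\varepsilon)/Q}\lesssim (r^{\varepsilon}/t)^2E(\mathcal C_\rho(z))$. Apply the Vitali covering lemma in $(\R^{n+m},d_{\alpha,\beta},\mu)$ to these cylinders to get disjoint cylinders whose enlargements cover $\{|u|>t\}$. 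Since $1-2(\alpha-\varepsilon)/Q=2/q\le 1$, the sum is superadditive, and
\[
\mu(\{|u|>t\})\lesssim\Big(\frac{r^{\varepsilon}E^{1/2}}{t}\Big)^{q}.
\]
For $\delta<q$, integrate the layer-cake formula: split at $t_0$, use the bound $\mu(\mathcal C(z_0,r))$ for $t<t_0$ and the weak estimate above for $t>t_0$, then optimize $t_0$. This yields
\[
\int|u|^\delta\le \frac{C}{q-\delta}\,\mu(\mathcal C(z_0,r))^{1-\delta/q}\,r^{\varepsilon\delta}E^{\delta/2},
\]
which is \eqref{eq:subcritical-sobolev} and shows the constant blowing up as $\delta\to q$.

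\textbf{Main obstacle.} The hard part is the pointwise chaining step. We must ensure that every link of the chain, including the initial cylinder outside the support, stays in a region where the energy is controlled by the energy on $\mathcal C(z_0,r)$. The cylinders are not balls, and the weight $|x|^{2\beta}$ is not translation invariant, so the Poincar\'e inequality must be applied on enclosing cylinders that stay uniformly comparable; this is why we use the uniform constant of Theorem~\ref{thm:uniform-poincare}. The first thing to try is the radius comparisons $\mathcal C_{r}\subset B_d(\cdot,c r)\subset\mathcal C_{c'r}$ together with the geometric choices in Proposition~\ref{prop:M-offset}.
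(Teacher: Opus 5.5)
Your proposal is correct and follows essentially the same route as the paper. Both use the offset base cylinder of Proposition~\ref{prop:M-offset}, whose average vanishes, a dyadic chain with Theorem~\ref{thm:uniform-poincare} applied on enclosing cylinders, and extraction of $r^{\varepsilon}$ by summing $\sum_k 2^{-k\varepsilon}$ (the paper phrases this as choosing an index $k_z$). They then apply a Vitali covering with superadditivity of $s\mapsto s^{2^*_{\alpha,\beta,\varepsilon}/2}$, and finish with Lemma~\ref{lem:measure_integrability}, which is exactly your layer-cake optimization.

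The cross term you worry about needs no Lipschitz-based absorption, and that option would not give a bound in terms of the energy anyway. Since $D_{x,\alpha}$ is the unrestricted density, Corollary~\ref{cor:support-comparison} gives $\int_{\R^{n+m}}|\nabla_{\mathcal L}u|^2\,dz\le 2\int_{\mathcal C(z_0,r)}|\nabla_{\mathcal L}u|^2\,dz$. This is what controls the energy on the disjoint cylinders that lie partly or wholly outside $\mathcal C(z_0,r)$.
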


The critical endpoint $\delta=2^*_{\alpha,\beta}$, which is excluded from Theorem~\ref{thm:Sobolev_subcritical}, is recovered in Section~\ref{sec:riesz} by a different approach, inspired by Section~5 of \cite{hajlasz_koskela}. We construct a Riesz-type potential $\mathcal J^{\mathcal{C}_R}_{\alpha,2}$, defined in \eqref{eq:reisz_potential}, as a telescoping sum over dyadic anisotropic cylinders; we show that it dominates the oscillation of $u$ pointwise (Theorem~\ref{thm:representation}); and we establish for it a sharp weak-type $(2,2^*_{\alpha,\beta})$ estimate by balancing it optimally against the Hardy-Littlewood maximal function (Theorem~\ref{thm:weaktype-riesz}). In order to upgrade this weak-type estimate to a strong $L^{2^*_{\alpha,\beta}}$ bound, the authors of \cite{hajlasz_koskela} exploit the fact that the relevant energy pair satisfies the truncation property for the Poincar\'e inequality. The nonlocal term in our operator obstructs this lifting, and we circumvent the difficulty by estimating the nonlocal contribution directly. Combining the two steps yields the sharp critical exponent on Grushin cylinders, as recorded in Theorem~\ref{thm:sobolev-cylinders}. The inequality is then extended to the whole space $\R^{n+m}$.

\begin{theorem}[Global Sobolev inequality on $\R^{n+m}$]
\label{thm:global} There exists $C=C(\alpha,\beta,n,m)>0$ such that for every $u\in\mathcal D^{\alpha,2}_\beta(\R^{n+m})$
\begin{equation}
\label{eq:global-sobolev}
\left(\int_{\R^{n+m}}|u|^{2^*_{\alpha,\beta}}\,dx\,dy\right)^{1/2^*_{\alpha,\beta}}
\le C \left(\int_{\R^{n+m}}|\nabla_{\mathcal L}u|^2\,dz\right)^{1/2},
\end{equation}
where $\mathcal D^{\alpha,2}_\beta(\R^{n+m})$ denotes the completion of $C_c^\infty(\R^{n+m})$ with respect to the norm $\|\cdot\|_{\mathcal D^{\alpha,2}_\beta}=\mathcal E_{\alpha,\beta}(\cdot)^{1/2}$, as in \eqref{eq:D-space}.
\end{theorem}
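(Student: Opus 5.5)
We prove Theorem~\ref{thm:global} by reducing it, through a density and scaling argument, to the cylinder version of the critical inequality, Theorem~\ref{thm:sobolev-cylinders}. That result gives, for every anisotropic cylinder $\mathcal C_R$ and every Lipschitz $u$ compactly supported in $\mathcal C_R$, a bound $\|u\|_{L^{2^*_{\alpha,\beta}}(\mathcal C_R)}\le C\big(\int|\nabla_{\mathcal L}u|^2\big)^{1/2}$. The point to check is that its constant is independent of $R$ and of the centre.

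\emph{Uniformity of the constant.} Scale by $u_\lambda(x,y)=\lambda^{(Q_{\alpha,\beta}-2\alpha)/2}u(\lambda x,\lambda^{\alpha+\beta}y)$. By the computations in the introduction, both sides of the inequality are invariant under this scaling, and $\delta_\lambda$ maps cylinders of radius $R$ centred at the origin to cylinders of radius $\lambda R$. So it suffices to prove the inequality on $\mathcal C_1$ and transport it.

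The one obstruction is the translation in $x$: the weight $|x|^{2\beta}$ is not translation invariant. We therefore do not translate. Every $\varphi\in C_c^\infty(\R^{n+m})$ is supported in some $\mathcal C_R=\mathcal C_R(0)$ centred at the origin, and we only use cylinders centred at $0$. Translations in $y$ are harmless anyway. If instead the constant of Theorem~\ref{thm:sobolev-cylinders} is already stated uniformly in $z_0$ and $r$ (as for Theorem~\ref{thm:uniform-poincare}), we simply invoke it.

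\emph{The nonlocal term on the cylinder.} For $\varphi$ supported in $\mathcal C_R$, the energy on the right-hand side of the cylinder estimate may involve only the seminorm $[\varphi]_{x,\alpha;\mathcal C_R}$ restricted to the cylinder, possibly plus a tail term. In either case we bound it by the global energy:
\[
[\varphi]^2_{x,\alpha;\mathcal C_R}\le \tfrac12\int_{\R^m}\!\int_{\R^n}\!\int_{\R^n}\frac{|\varphi(x,y)-\varphi(\xi,y)|^2}{|x-\xi|^{n+2\alpha}}\,dx\,d\xi\,dy,
\]
since the integrand is nonnegative. The local term $\int|x|^{2\beta}|\nabla_y\varphi|^2$ likewise only increases when we integrate over all of $\R^{n+m}$. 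This gives
\[
\|\varphi\|_{L^{2^*_{\alpha,\beta}}(\R^{n+m})}\le C\,\mathcal E_{\alpha,\beta}(\varphi)^{1/2}\quad\text{for all }\varphi\in C_c^\infty(\R^{n+m}),
\]
with $C$ independent of $R$.

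\emph{Passage to $\mathcal D^{\alpha,2}_\beta$.} Let $u\in\mathcal D^{\alpha,2}_\beta(\R^{n+m})$ and choose $\varphi_k\in C_c^\infty(\R^{n+m})$ Cauchy in the energy norm. By the inequality just proved, $(\varphi_k)$ is Cauchy in $L^{2^*_{\alpha,\beta}}$, so it converges to some $v$ in $L^{2^*_{\alpha,\beta}}$. We identify $v$ with the element $u$ of the completion; equivalently, the completion embeds continuously into $L^{2^*_{\alpha,\beta}}$. Passing to the limit gives \eqref{eq:global-sobolev}.

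\emph{Main obstacle.} The delicate step is ensuring the constant from Theorem~\ref{thm:sobolev-cylinders} does not depend on $R$. This can fail if the nonlocal estimate there used a cutoff or a tail bound scaling like a power of $R$. In that case we would redo the argument on $\mathcal C_1$ and use the exact dilation invariance of all three quantities, namely the fractional seminorm, the weighted $y$-gradient and the $L^{2^*_{\alpha,\beta}}$ norm, under $u_\lambda$ with $\lambda=R$. This is legitimate because the dilation fixes the hyperplane $x=0$ and so respects the weight.
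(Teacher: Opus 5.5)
\textbf{There is a gap: you misquote Theorem~\ref{thm:sobolev-cylinders}, and the step you therefore skip is the actual content of the paper's proof.}

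That theorem does not give $\|u\|_{L^{2^*_{\alpha,\beta}}(\mathcal C_R)}\le C(\int|\nabla_{\mathcal L}u|^2)^{1/2}$ for $u$ compactly supported in $\mathcal C_R$. What it proves, for $u\in\mathcal H^{\mathrm{loc}}_{\alpha,\beta}(\R^{n+m})$, is the oscillation inequality
\[
\inf_{c\in\R}\Big(\int_{\mathcal C_R}|u-c|^{2^*_{\alpha,\beta}}\,dz\Big)^{1/2^*_{\alpha,\beta}}
\le C_2\Big(\int_{V_R}|\nabla_{\mathcal L}u|^2\,dz\Big)^{1/2}.
\]
The only cylinder result stated for compactly supported functions without a centring constant is the subcritical Theorem~\ref{thm:Sobolev_subcritical}, which never reaches $2^*_{\alpha,\beta}$. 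The infimum over $c$ is built into the proof of the critical one: it truncates $u-b$ around a median $b$ of $u$ on $\mathcal C_R$, and nothing there forces $b=0$.

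So after you bound the right-hand side by $\mathcal E_{\alpha,\beta}(\varphi)$ (which is fine, since $|\nabla_{\mathcal L}\varphi|^2\ge0$), you control only $\inf_c\|\varphi-c\|_{L^{2^*_{\alpha,\beta}}(\mathcal C_R)}$, not $\|\varphi\|_{L^{2^*_{\alpha,\beta}}}$. Removing this constant is exactly what the paper's proof of Theorem~\ref{thm:global} does. The obstacle you single out instead, $R$-dependence of the constant, does not arise: $C_2$ is independent of $R$, and the cylinders in that theorem are already centred at the origin.

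\textbf{How to repair it.} The missing step is short. The paper takes $\operatorname{supp}\varphi\subset\mathcal C_{R_0}$ and lets $c_R$ minimize $g_R(c)=\int_{\mathcal C_R}|\varphi-c|^{2^*_{\alpha,\beta}}\,dz$. Set $M:=\int|\varphi|^{2^*_{\alpha,\beta}}$. From $g_R(c_R)\le g_R(0)=M$ and $\varphi=0$ on $\mathcal C_R\setminus\mathcal C_{R_0}$ it gets
\[
|c_R|^{2^*_{\alpha,\beta}}\le \frac{M}{|\mathcal C_R|-|\mathcal C_{R_0}|}\to0 .
\]
Hence $\int_{\mathcal C_{R_0}}|\varphi-c_R|^{2^*_{\alpha,\beta}}\to M$ (Lemma~\ref{lem:vanishing-constant}), and letting $R\to\infty$ gives the inequality with constant $C_2$.

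A single scale also works. If $|\mathcal C_R|\ge2|\mathcal C_{R_0}|$, then for every $c$ one has $|c|\,|\mathcal C_{R_0}|^{1/2^*_{\alpha,\beta}}\le\|\varphi-c\|_{L^{2^*_{\alpha,\beta}}(\mathcal C_R\setminus\mathcal C_{R_0})}$. It follows that $\|\varphi\|_{L^{2^*_{\alpha,\beta}}}\le2\inf_c\|\varphi-c\|_{L^{2^*_{\alpha,\beta}}(\mathcal C_R)}$.

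With either step inserted, the rest of your argument matches the paper's. That includes bounding the energy over $V_R$ by $\mathcal E_{\alpha,\beta}(\varphi)$ and the density extension to $\mathcal D^{\alpha,2}_\beta(\R^{n+m})$.
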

As a corollary of Theorem~\ref{thm:global} we obtain a Rellich-Kondrachov type compact embedding for the space $\mathcal D^{\alpha,2}_\beta(\R^{n+m})$.

\begin{theorem}[Rellich-Kondrachov on bounded sets]
\label{lem:rellich} Let $0<\alpha<1$ and $\beta\ge0$, and let $\{u_k\}$ be a bounded sequence in $\mathcal D^{\alpha,2}_\beta(\R^{n+m})$. Then for every bounded measurable set $\mathcal A\subset\R^{n+m}$ and every $1\le\delta<2^*_{\alpha,\beta}$ some subsequence of $\{u_k\}$ converges in $L^\delta(\mathcal A)$; that is,
\[
\mathcal D^{\alpha,2}_\beta(\R^{n+m})\hookrightarrow\hookrightarrow
L^{\delta}_{\mathrm{loc}}(\R^{n+m}),
\qquad 1\le\delta<2^*_{\alpha,\beta}.
\]
\end{theorem}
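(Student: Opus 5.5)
The plan is to reduce compactness in $L^\delta_{\mathrm{loc}}$ to compactness in $L^1$ on a fixed cylinder, and then interpolate using the uniform $L^{2^*_{\alpha,\beta}}$ bound from Theorem~\ref{thm:global}.

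\emph{Reduction.} Since $\mathcal A$ is bounded, it lies in some cylinder $\mathcal C_R$. By Theorem~\ref{thm:global},
\[
\sup_k\|u_k\|_{L^{2^*_{\alpha,\beta}}(\R^{n+m})}\le C\sup_k\mathcal E_{\alpha,\beta}(u_k)^{1/2}<\infty .
\]
For $1\le\delta<2^*_{\alpha,\beta}$, the interpolation inequality
\[
\|v\|_{L^\delta(\mathcal C_R)}\le\|v\|_{L^1(\mathcal C_R)}^{\theta}\,\|v\|_{L^{2^*_{\alpha,\beta}}(\mathcal C_R)}^{1-\theta}
\]
shows that a subsequence which is Cauchy in $L^1(\mathcal C_R)$ is also Cauchy in $L^\delta(\mathcal C_R)$. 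It therefore suffices to prove precompactness in $L^1(\mathcal C_R)$, or equivalently in $L^2(\mathcal C_R)$, since the $L^2$ bound is uniform by Hölder.

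\emph{Precompactness via Poincaré averages.} I would use the Fréchet–Kolmogorov-type criterion through piecewise-constant approximation, driven by Theorem~\ref{thm:uniform-poincare}.
\begin{enumerate}
\item Fix $r>0$ and tile $\mathcal C_R$, up to a null set, by finitely many disjoint anisotropic boxes $\mathcal C_r(z_i)$. I would use half-open cubes of side $\sim r$ in $x$ and $\sim r^{\alpha+\beta}$ in $y$; these are comparable to cylinders, and the Poincaré inequality on them follows from Theorem~\ref{thm:uniform-poincare} applied to a slightly larger cylinder containing each one.
\item Define $P_r u:=\sum_i u_{\mathcal C_r(z_i)}\mathbf 1_{\mathcal C_r(z_i)}$.
\item Sum the Poincaré inequalities over the tiles to get
\[
\|u-P_ru\|_{L^2(\mathcal C_R)}^2\le C r^{2\alpha}\sum_i\Big([u]^2_{x,\alpha;\mathcal C_r(z_i)}+\int_{\mathcal C_r(z_i)}|x|^{2\beta}|\nabla_y u|^2\Big)\le C r^{2\alpha}\,\mathcal E_{\alpha,\beta}(u).
\]
\end{enumerate}
The key point in step 3 is that the local Gagliardo seminorms over disjoint cylinders are dominated by the global one. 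Indeed, the sets $B^n_r(x_i)\times B^n_r(x_i)\times B^m(y_i)$ are disjoint subsets of the integration domain of $\mathcal E_{\alpha,\beta}$, since for fixed $y$ the $x$-slices of the tiles are disjoint. With enlarged cylinders there is instead bounded overlap, which costs only a constant.

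\emph{Extraction.} For fixed $r$, $\{P_ru_k\}_k$ lies in a bounded subset of a finite-dimensional space, because each average is bounded by the uniform $L^2$ bound. Hence it has a convergent subsequence. A diagonal argument over $r=2^{-j}$, combined with $\sup_k\|u_k-P_ru_k\|_{L^2}\le Cr^{\alpha}$, gives a subsequence that is Cauchy in $L^2(\mathcal C_R)$, and therefore in $L^\delta(\mathcal A)$ for every $\delta<2^*_{\alpha,\beta}$ by the interpolation above.

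The step that needs care is step 3. First, elements of $\mathcal D^{\alpha,2}_\beta$ must restrict to $\mathcal H_{\alpha,\beta}(\mathcal C_r(z_i))$; this is fine by density of $C_c^\infty$, applying everything to smooth approximants. Second, summing the localized nonlocal seminorms must not exceed the global energy, which the disjointness argument handles.
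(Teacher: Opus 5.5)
Your argument is correct, and it takes a genuinely different route from the paper. The paper applies the Riesz--Kolmogorov criterion directly. It bounds $x$-translations by Plancherel in $x$, $\|\tau^x_h v-v\|_{L^2}\le C|h|^{\alpha}[v]_{x,\alpha}$. It bounds $y$-translations by splitting at $|x|=\sigma$: where $|x|\ge\sigma$ it uses the weighted gradient with $|x|^{-\beta}\le\sigma^{-\beta}$, and on the thin slab $\{|x|<\sigma\}$ it uses the uniform $L^{2^*_{\alpha,\beta}}$ bound of Theorem~\ref{thm:global}. It then interpolates between $L^2$ and $L^{2^*_{\alpha,\beta}}$, using H\"older for $\delta<2$.

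You instead use the finite-rank averaging operators $P_r$ together with the uniform Poincar\'e inequality, which is the standard ``Poincar\'e implies compactness'' mechanism on doubling spaces. The gain is that the degeneracy on $\Sigma$ needs no separate treatment. It is already absorbed into the position-independent constant of Lemma~\ref{lem:poincare-cell}, through the good set $G$ and the fractional $x$-term. You therefore get the clean bound $\|u-P_ru\|_{L^2(\mathcal C_R)}\le Cr^{\alpha}\mathcal E_{\alpha,\beta}(u)^{1/2}$, uniformly in the position of the tiles. The paper's $y$-modulus $|\zeta|\sigma^{-\beta}+\sigma^{\alpha n/Q}$, by contrast, comes from an ad hoc two-parameter optimization. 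The paper's route uses the Poincar\'e inequality only indirectly, through Theorem~\ref{thm:global}, and relies otherwise on elementary translation estimates.

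Two small corrections:
\begin{itemize}
\item The disjoint sets in your step 3 should be $Q^x_i\times Q^x_i\times Q^y_i$ for the box tiles $Q_i=Q^x_i\times Q^y_i$; balls do not tile.
\item You do not need enlarged cylinders. After the anisotropic rescaling in the proof of Theorem~\ref{thm:uniform-poincare}, a box becomes a unit cube in $X$ times a convex unit cube in $Y$. Lemma~\ref{lem:poincare-cell} applies to this directly, with a position-independent constant.
\end{itemize}
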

Finally, the Grushin-Sobolev inequality \eqref{eq:global-sobolev} allows us to define the associated optimal constant,
\[
S_{\alpha,\beta}(n,m):=\inf_{\substack{u\in\mathcal
D^{\alpha,2}_\beta(\R^{n+m})\\u\ne0}}
\frac{\mathcal E_{\alpha,\beta}(u)}
{\big(\int_{\R^{n+m}}|u|^{2^*_{\alpha,\beta}}\,dx\,dy\big)^{2/2^*_{\alpha,\beta}}}
=\inf_{\substack{u\in\mathcal D^{\alpha,2}_\beta(\R^{n+m})\\
\|u\|_{L^{2^*_{\alpha,\beta}}}=1}}\mathcal E_{\alpha,\beta}(u).
\]
The quotient defining $S_{\alpha,\beta}(n,m)$ is invariant under the translation-dilation group
\begin{equation*}
u_{\rho,\eta}(x,y):=\rho^{\frac{Q_{\alpha,\beta}-2\alpha}2}u(\rho
x,\rho^{\alpha+\beta}y+\eta),
\qquad\rho>0,\ \eta\in\R^m,
\end{equation*}
in the sense that $\|u_{\rho,\eta}\|_{L^{2^*_{\alpha,\beta}}(\R^{n+m})} =\|u\|_{L^{2^*_{\alpha,\beta}}(\R^{n+m})}$ and $\mathcal E_{\alpha,\beta}(u_{\rho,\eta})=\mathcal E_{\alpha,\beta}(u)$. We emphasise that the quotient is \emph{not} invariant under translations in the $x$-variable for $\beta > 0$, owing to the presence of the weight $|x|^{2\beta}$.

We shall prove that this infimum is attained for every $n,m\ge1$, by means of the concentration-compactness method, in the following form.
\begin{theorem}
[Concentration-compactness principle] \label{lem:ccp-mixed-fractional} Let  {$0< \alpha < 1$, $\beta > 0$} and $\{u_k\}\subset\mathcal D^{\alpha,2}_\beta(\R^{n+m})$ be a bounded sequence such that $u_k\rightharpoonup u$ weakly in $\mathcal D^{\alpha,2}_\beta(\R^{n+m})$ and $u_k\to u$ a.e. in $\R^{n+m}$. Set $w_k:=u_k-u$, and assume that, up to a subsequence,
\[
|w_k|^{2^*_{\alpha,\beta}}\,dx\,dy\stackrel{*}{\rightharpoonup}\nu
\quad\text{and}\quad
\big[D_{x,\alpha}w_k+(1+\beta)^2|x|^{2\beta}|\nabla_yw_k|^2\big]\,dx\,dy
\stackrel{*}{\rightharpoonup}\mu
\]
weakly-$*$ in $\mathcal M(\R^{n+m})$, for some nonnegative finite measures $\mu$ and $\nu$, where $D_{x,\alpha}$ is the fractional $x$-energy \eqref{eq:density}. Define the concentration masses at \(\infty\) by
\begin{align*}
\mu_\infty&:=\lim_{R\to\infty}\limsup_{k\to\infty}
\int_{\R^{n+m}\setminus B_R}\big[D_{x,\alpha}w_k
+(1+\beta)^2|x|^{2\beta}|\nabla_yw_k|^2\big]\,dx\,dy, \\
\nu_\infty&:=\lim_{R\to\infty}\limsup_{k\to\infty}
\int_{\R^{n+m}\setminus B_R}|w_k|^{2^*_{\alpha,\beta}}\,dx\,dy,
\end{align*}
where $B_R=B_d(0,R)$. Then
\begin{equation}
\label{minq}
\|\nu\|^{2/2^*_{\alpha,\beta}}\le S_{\alpha,\beta}(n,m)^{-1}\|\mu\|,
\qquad
\nu_\infty^{2/2^*_{\alpha,\beta}}\le S_{\alpha,\beta}(n,m)^{-1}\mu_\infty,
\end{equation}
and
\begin{align}
\limsup_{k\to\infty}\int_{\R^{n+m}}|u_k|^{2^*_{\alpha,\beta}}\,dx\,dy
&=\int_{\R^{n+m}}|u|^{2^*_{\alpha,\beta}}\,dx\,dy+\|\nu\|+\nu_\infty,
\label{eq:Lp-balance}\\
\limsup_{k\to\infty}\mathcal E_{\alpha,\beta}(u_k)
&=\mathcal E_{\alpha,\beta}(u)+\|\mu\|+\mu_\infty .
\label{eq:E-balance}
\end{align}
If, in addition, $u=0$, $\nu\ne0$, and equality holds in the first inequality of \eqref{minq}, then $\nu$ is concentrated at a single point: there exist $z_0\in\R^{n+m}$ and $\nu_0>0$ such that $\nu=\nu_0\delta_{z_0}$. Finally, $\nu$ is concentrated on the degenerate hyperplane
\[
\Sigma:=\{(0,y):y\in\R^m\},
\qquad\text{that is,}\qquad
\nu\big(\R^{n+m}\setminus\Sigma\big)=0 ;
\]
in particular, $\nu$ cannot be a Dirac mass at a point $z_0=(x_0,y_0)$ with $x_0\ne0$.
\end{theorem}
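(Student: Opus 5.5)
We follow Lions' second concentration-compactness lemma, adapted to the nonlocal $x$-energy. The basic tool is a localized Sobolev inequality for $w_k$. Fix $\varphi\in C_c^\infty(\R^{n+m})$ and apply the definition of $S_{\alpha,\beta}(n,m)$ (justified by Theorem~\ref{thm:global}) to $\varphi w_k$:
\[
S_{\alpha,\beta}\Big(\int|\varphi w_k|^{2^*_{\alpha,\beta}}\Big)^{2/2^*_{\alpha,\beta}}\le\mathcal E_{\alpha,\beta}(\varphi w_k).
\]
The local term expands as $\int|x|^{2\beta}\varphi^2|\nabla_yw_k|^2$ plus cross terms containing $w_k\nabla_y\varphi$. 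On $\operatorname{supp}\varphi$ we have $w_k\to0$ in $L^2$ by Theorem~\ref{lem:rellich}, so these cross terms vanish. For the nonlocal term we use
\[
(\varphi w_k)(x,y)-(\varphi w_k)(\xi,y)=\varphi(x,y)\big(w_k(x,y)-w_k(\xi,y)\big)+w_k(\xi,y)\big(\varphi(x,y)-\varphi(\xi,y)\big).
\]
The first piece gives $\int\varphi^2D_{x,\alpha}w_k$, up to a symmetrization error that we absorb. The commutator piece is bounded by $\int|w_k(\xi,y)|^2\int\frac{|\varphi(x,y)-\varphi(\xi,y)|^2}{|x-\xi|^{n+2\alpha}}dx\,d\xi\,dy$. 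Its inner kernel $K_\varphi(\xi,y)$ is bounded and decays like $|\xi|^{-n-2\alpha}$. We split it into a compact region, where Rellich applies, and a tail, where we use Hölder with $\|w_k\|_{2^*_{\alpha,\beta}}$ bounded and $K_\varphi\in L^{Q_{\alpha,\beta}/(2\alpha)}$. This shows the commutator tends to $0$. We expect this nonlocal commutator estimate, uniform in $k$, to be the \emph{main obstacle}. Passing to the limit gives the reverse Hölder inequality
\[
\Big(\int|\varphi|^{2^*_{\alpha,\beta}}d\nu\Big)^{2/2^*_{\alpha,\beta}}\le S_{\alpha,\beta}^{-1}\int\varphi^2\,d\mu .
\]
Taking $\varphi\uparrow1$ yields the first inequality in \eqref{minq}. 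Repeating the argument with $\psi_R=1-\varphi_R$, where $\varphi_R$ is a cutoff equal to $1$ on $B_R$ and supported in $B_{2R}$, gives the inequality for $\nu_\infty,\mu_\infty$. For this step the commutator needs decay of $K_{\psi_R}$ of order $R^{-2\alpha}$, obtained from the anisotropic scaling.

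For the balance laws, Brezis--Lieb (using a.e.\ convergence) gives $\int|u_k|^{2^*}=\int|u|^{2^*}+\int|w_k|^{2^*}+o(1)$. We then split $\int|w_k|^{2^*}=\int\varphi_R|w_k|^{2^*}+\int(1-\varphi_R)|w_k|^{2^*}$ and let $k\to\infty$ and then $R\to\infty$, which yields \eqref{eq:Lp-balance}. For \eqref{eq:E-balance} we use that $\mathcal E_{\alpha,\beta}$ is a Hilbert norm: weak convergence gives $\mathcal E(u_k)=\mathcal E(u)+\mathcal E(w_k)+o(1)$, and the same cutoff splitting of the energy density of $w_k$ produces $\|\mu\|+\mu_\infty$.

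For the single-point statement, equality together with the reverse Hölder inequality gives $\nu(E)^{2/2^*}\le S^{-1}\mu(E)$ for all Borel $E$, with equality in total mass. Since $2/2^*<1$, the function $t\mapsto t^{2/2^*}$ is strictly subadditive, so $\nu$ cannot split into two disjoint pieces of positive mass. Hence $\nu$ is a single atom $\nu_0\delta_{z_0}$, as in Lions.

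For the localization to $\Sigma$, we take $z_0=(x_0,y_0)$ with $x_0\neq0$ and a small cylinder $\mathcal C_r(z_0)$ on which $|x|^{2\beta}\simeq|x_0|^{2\beta}$. There the weight is bounded above and below, so the energy restricted to $\mathcal C_r(z_0)$ controls the unweighted energy $[\cdot]_{x,\alpha}^2+\|\nabla_y\cdot\|_2^2$. That energy corresponds to $\beta=0$, with homogeneous dimension $n+\alpha m<Q_{\alpha,\beta}$. Consequently $2^*_{\alpha,\beta}<2^*_{\alpha,0}$, i.e.\ the exponent is strictly subcritical for the nondegenerate operator. The local compactness of Theorems~\ref{thm:Sobolev_subcritical} and~\ref{lem:rellich}, applied with $\beta=0$ and using $w_k\rightharpoonup0$, gives $\varphi w_k\to0$ in $L^{2^*_{\alpha,\beta}}$ for $\varphi$ supported in $\mathcal C_r(z_0)$. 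Therefore $\nu(\mathcal C_{r/2}(z_0))=0$. Covering $\R^{n+m}\setminus\Sigma$ by countably many such cylinders gives $\nu(\R^{n+m}\setminus\Sigma)=0$. In particular no Dirac mass can sit at a point with $x_0\neq0$.
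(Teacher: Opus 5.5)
Your proposal is correct and follows essentially the same route as the paper. The shared steps are the cutoff estimate with the compact-region/tail splitting of the nonlocal commutator $\int|w_k|^2K_\varphi$, the reverse H\"older inequality, Brezis--Lieb together with the Hilbert-space splitting for the balance laws, and comparison with the unweighted $\beta=0$ energy near points with $x_0\neq0$, where $2^*_{\alpha,\beta}<2^*_{\alpha,0}$ yields compactness. Your strict-subadditivity argument for the single atom, and your use of Rellich at $\beta=0$ where the paper uses Vitali, are harmless variants.

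One correction concerns the estimate at infinity. No $R^{-2\alpha}$ decay of $K_{\psi_R}$ is needed there, and it would not suffice anyway, since $\|K_{\varphi_R}\|_{L^{Q_{\alpha,\beta}/(2\alpha)}}$ is scale-invariant. What works is that $K_{1-\varphi_R}=K_{\varphi_R}$, so for each fixed $R$ the commutator vanishes as $k\to\infty$ exactly as for compactly supported $\varphi$. This matches the order of limits in the definitions of $\mu_\infty$ and $\nu_\infty$.
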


Theorem~\ref{lem:ccp-mixed-fractional} is the analogue for $\mathcal E_{\alpha,\beta}$ of the concentration-compactness principle of Lions \cite{lions1985}; see \cite{Pala-Pis-Cal} for the fractional Laplacian. Together with the translation-dilation invariance \eqref{scaling} of the Sobolev quotient, it supplies one of the key compactness tools needed to prove that $S_{\alpha,\beta}(n,m)$ is attained. The decisive point is that concentration of $\nu$ can occur only on the degenerate hyperplane $\Sigma=\{(0,y):y\in\R^m\}$. This leads to the following existence result.
\begin{theorem}[Existence of minimizers for $S_{\alpha,\beta}(n,m)$]
\label{thm:existence-minimizer-alphabeta} Let  {$0< \alpha < 1$, $\beta > 0$} and $\{v_k\}\subset\mathcal D^{\alpha,2}_\beta(\R^{n+m})$ be a minimizing sequence for $S_{\alpha,\beta}(n,m)$, that is,
\begin{equation*}
\|v_k\|_{L^{2^*_{\alpha,\beta}}} = 1,
\qquad
\mathcal E_{\alpha,\beta}(v_k)\longrightarrow S_{\alpha,\beta}(n,m).
\end{equation*}
Then there exists a sequence $\{(e_k,\rho_k)\}\subset\R^m\times(0,\infty)$ such that the rescaled sequence
\begin{equation}
\label{eq:rescaled-sequence-alphabeta}
v_k^{e_k,\rho_k}(x,y):=\rho_k^{\frac{Q_{\alpha,\beta}-2\alpha}{2}}\,
v_k\big(\rho_k x,\,\rho_k^{\alpha+\beta}y+e_k\big)
\end{equation}
admits a subsequence converging strongly in $\mathcal D^{\alpha,2}_\beta(\R^{n+m})$ to some $v\in\mathcal D^{\alpha,2}_\beta(\R^{n+m})$ with
\[
\|v\|_{L^{2^*_{\alpha,\beta}}} = 1,
\qquad
\mathcal E_{\alpha,\beta}(v) = S_{\alpha,\beta}(n,m).
\]
In particular, a minimizer for $S_{\alpha,\beta}(n,m)$ exists.
\end{theorem}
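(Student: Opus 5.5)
The plan is the standard Lions rescaling argument, combined with Theorem~\ref{lem:ccp-mixed-fractional} and the restricted symmetry group $\{(\rho,\eta)\}$. We use the Lévy concentration function
\[
Q_k(t):=\sup_{\eta\in\R^m}\int_{\mathcal C_t((0,\eta))}|v_k|^{2^*_{\alpha,\beta}}\,dz .
\]
The centres are restricted to $\Sigma$ because only $y$-translations are admissible. The first task is to show that this normalization is still able to capture a fixed fraction of the mass.

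For each $k$ the map $t\mapsto Q_k(t)$ is continuous and nondecreasing, with $Q_k(t)\to0$ as $t\to0$ and $Q_k(t)\to1$ as $t\to\infty$. We may therefore choose $\rho_k$ with $Q_k(\rho_k)=1/2$, and then $e_k$ nearly attaining the supremum. After rescaling by $(\rho_k,e_k)$ as in \eqref{eq:rescaled-sequence-alphabeta}, the sequence $w_k:=v_k^{e_k,\rho_k}$ satisfies
\[
\sup_{\eta}\int_{\mathcal C_1((0,\eta))}|w_k|^{2^*_{\alpha,\beta}}=\tfrac12,
\qquad \int_{\mathcal C_1(0)}|w_k|^{2^*_{\alpha,\beta}}\ge\tfrac12-o(1).
\]
The scaling invariance recorded after Theorem~\ref{lem:rellich} gives $\|w_k\|_{2^*_{\alpha,\beta}}=1$ and $\mathcal E_{\alpha,\beta}(w_k)\to S_{\alpha,\beta}(n,m)$. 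By boundedness, a subsequence has $w_k\rightharpoonup v$ in $\mathcal D^{\alpha,2}_\beta$. Theorem~\ref{lem:rellich} then upgrades this to $w_k\to v$ a.e., and the measures in Theorem~\ref{lem:ccp-mixed-fractional} converge weakly-$*$.

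The decisive step applies \eqref{eq:Lp-balance}--\eqref{eq:E-balance}, together with \eqref{minq} and $\mathcal E(v)\ge S\|v\|_{2^*}^2$. Write $a=\|v\|^{2^*}_{2^*}$, $b=\|\nu\|$ and $c=\nu_\infty$. We obtain $1=a+b+c$ and
\[
S\ge S\big(a^{2/2^*}+b^{2/2^*}+c^{2/2^*}\big).
\]
Since $2/2^*<1$, strict concavity forces exactly one of $a,b,c$ to equal $1$.

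Each alternative other than $a=1$ is then ruled out in turn.
\begin{itemize}
\item If $c=1$, mass escapes to infinity. This contradicts $\int_{\mathcal C_1(0)}|w_k|^{2^*}\ge 1/2-o(1)$, because that lower bound keeps half the mass inside a fixed bounded set.
\item If $b=1$, then $v=0$ and equality holds in \eqref{minq}. The last part of Theorem~\ref{lem:ccp-mixed-fractional} gives $\nu=\delta_{z_0}$ with $z_0=(0,y_0)\in\Sigma$, and here the $\beta>0$ localization is essential. Since $\mathcal C_1((0,y_0))$ is an open set containing $z_0$, weak-$*$ convergence yields $\liminf_k\int_{\mathcal C_1((0,y_0))}|w_k|^{2^*}\ge 1$. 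This contradicts the supremum being $1/2$.
\end{itemize}
Without the confinement to $\Sigma$, a Dirac mass at some $(x_0,y_0)$ with $x_0\neq0$ could not be excluded. The reason is that $Q_k$ only controls cylinders centred on $\Sigma$, so this is where the hypothesis $\beta>0$ enters.

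Hence $a=1$, so $\|v\|_{2^*}=1$ and $w_k\to v$ in $L^{2^*}$ by Brezis--Lieb. Weak lower semicontinuity gives $S\le\mathcal E(v)\le\liminf\mathcal E(w_k)=S$. Because $\mathcal E$ is a Hilbert norm squared, norm convergence together with weak convergence yields strong convergence in $\mathcal D^{\alpha,2}_\beta$.

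The main obstacle is the normalization step. One must check that $Q_k$ tends to $1$ as $t\to\infty$, and that a single cylinder centred on $\Sigma$ suffices, even though cylinders centred off $\Sigma$ cannot be moved there by the admissible symmetries. Both points rely on the measure being localized to $\Sigma$. If needed, $1/2$ can be replaced by any fixed threshold strictly between $0$ and $1$, and one can argue with cylinders $\mathcal C_R$, $R$ large, in the exclusion of the vanishing case.
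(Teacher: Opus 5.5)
Your proposal is correct and is essentially the paper's argument. The paper only cites the Lions-type scheme of \cite[Theorem~3.4]{alves_BN_grushin} together with Theorem~\ref{lem:ccp-mixed-fractional}, and your concentration function over cylinders centred on $\Sigma$ is exactly what the $\Sigma$-localization of $\nu$ makes work for excluding the Dirac alternative.

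One small correction to your closing remark: $Q_k(t)\to1$ as $t\to\infty$ needs no localization, since $\mathcal C_t(0)\uparrow\R^{n+m}$. The localization is used only in ruling out $b=1$.
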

As a direct consequence of Theorem \ref{thm:existence-minimizer-alphabeta}, we obtain the Euler-Lagrange equation satisfied by extremals.
\begin{corollary}
Let \(0<\alpha<1\), \(\beta>0\), and let \(v\in \mathcal{D}_{\beta}^{\alpha,2}(\mathbb R^{n+m})\) be a minimizer for \(S_{\alpha,\beta}(n,m)\), normalized by \( \|v\|_{L^{2^*_{\alpha,\beta}}(\mathbb R^{n+m})}=1. \) Then \(v\) satisfies
\[
\mathcal B_{\alpha,\beta}(v,\varphi)
=
S_{\alpha,\beta}(n,m)
\int_{\mathbb R^{n+m}}
|v|^{2^*_{\alpha,\beta}-2}v\varphi\,dx\,dy,~~~\quad \text{for every}~~ \varphi\in C_c^\infty(\mathbb R^{n+m}).
\]
\end{corollary}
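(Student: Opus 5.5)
The plan is to derive the Euler--Lagrange equation from the first variation of the Sobolev quotient at the minimizer $v$. Here $\mathcal B_{\alpha,\beta}$ denotes the bilinear form associated with $\mathcal E_{\alpha,\beta}$ by polarization:
\[
\mathcal B_{\alpha,\beta}(u,\varphi)=\frac12\int_{\R^m}\!\int_{\R^n}\!\int_{\R^n}\frac{(u(x,y)-u(\xi,y))(\varphi(x,y)-\varphi(\xi,y))}{|x-\xi|^{n+2\alpha}}\,dx\,d\xi\,dy+(1+\beta)^2\int_{\R^{n+m}}|x|^{2\beta}\nabla_yu\cdot\nabla_y\varphi\,dz,
\]
so that $\mathcal E_{\alpha,\beta}(u)=\mathcal B_{\alpha,\beta}(u,u)$.

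Fix $\varphi\in C_c^\infty(\R^{n+m})$. Since $C_c^\infty$ is dense in $\mathcal D^{\alpha,2}_\beta$, we have $v+t\varphi\in\mathcal D^{\alpha,2}_\beta(\R^{n+m})$ for every $t\in\R$. Because $\|v\|_{L^{2^*_{\alpha,\beta}}}=1$, the denominator
\[
N(t):=\int|v+t\varphi|^{2^*_{\alpha,\beta}}\,dz
\]
is positive for $|t|$ small. The quotient
\[
F(t):=\frac{\mathcal E_{\alpha,\beta}(v+t\varphi)}{N(t)^{2/2^*_{\alpha,\beta}}}
\]
is therefore well defined near $t=0$. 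By the definition of $S_{\alpha,\beta}(n,m)$ and Theorem~\ref{thm:existence-minimizer-alphabeta}, $F$ attains its minimum value $S_{\alpha,\beta}(n,m)$ at $t=0$. Hence $F'(0)=0$, provided $F$ is differentiable at $0$.

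We check differentiability of the two factors separately.

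\emph{Numerator.} By bilinearity,
\[
\mathcal E_{\alpha,\beta}(v+t\varphi)=\mathcal E_{\alpha,\beta}(v)+2t\,\mathcal B_{\alpha,\beta}(v,\varphi)+t^2\mathcal E_{\alpha,\beta}(\varphi).
\]
All three terms are finite by Cauchy--Schwarz, since $v,\varphi\in\mathcal D^{\alpha,2}_\beta$. The derivative of the numerator at $0$ is therefore $2\mathcal B_{\alpha,\beta}(v,\varphi)$.

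\emph{Denominator.} Write $p=2^*_{\alpha,\beta}>2$. Pointwise, $\frac{d}{dt}|v+t\varphi|^p=p|v+t\varphi|^{p-2}(v+t\varphi)\varphi$. For $|t|\le1$ this is dominated by $p(|v|+|\varphi|)^{p-1}|\varphi|$, which is integrable by H\"older's inequality, since $v\in L^p$ by Theorem~\ref{thm:global} and $\varphi\in L^p$. The mean value theorem and dominated convergence then give
\[
N'(0)=p\int|v|^{p-2}v\varphi\,dz .
\]

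Combining, with $N(0)=1$ and $\mathcal E_{\alpha,\beta}(v)=S_{\alpha,\beta}(n,m)$,
\[
0=F'(0)=2\mathcal B_{\alpha,\beta}(v,\varphi)-\frac2p\,\mathcal E_{\alpha,\beta}(v)\,N'(0)=2\mathcal B_{\alpha,\beta}(v,\varphi)-2S_{\alpha,\beta}(n,m)\int|v|^{p-2}v\varphi\,dz .
\]
This is exactly the stated identity.

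The only genuinely delicate point is the justification of differentiation under the integral for the $L^{2^*_{\alpha,\beta}}$ term, which the domination above handles. The nonlocal part of $\mathcal B_{\alpha,\beta}$ causes no difficulty, because it enters only through the polarization identity.
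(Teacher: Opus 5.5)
Your argument is correct and is exactly the standard first-variation computation that the paper leaves implicit when it calls the corollary a direct consequence of Theorem~\ref{thm:existence-minimizer-alphabeta}; the domination by $p(|v|+|\varphi|)^{p-1}|\varphi|\in L^1$ properly justifies differentiating the $L^{2^*_{\alpha,\beta}}$ term. The only slip is cosmetic: $v+t\varphi\in\mathcal D^{\alpha,2}_\beta(\R^{n+m})$ holds because $C_c^\infty(\R^{n+m})\subset\mathcal D^{\alpha,2}_\beta(\R^{n+m})$ and the space is linear, not because of density.
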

\begin{remark}
The preceding corollary gives the Euler-Lagrange equation satisfied by extremals of the sharp Sobolev quotient. However, it does not provide an explicit closed form for such extremals. The classification and possible explicit representation of extremals for \(S_{\alpha,\beta}(n,m)\) remain open questions.
\end{remark}
\begin{remark}
The rescaling \eqref{eq:rescaled-sequence-alphabeta} involves translations in the $y$-variable only, in accordance with the invariance group \eqref{scaling}. In the non-degenerate case $\beta=0$ the weight $|x|^{2\beta}$ disappears and $\mathcal L_{\alpha,0}=(-\Delta_x)^\alpha-\Delta_y$; the energy $\mathcal E_{\alpha,0}$ is then invariant under translations in both variables, and \eqref{eq:rescaled-sequence-alphabeta} may be replaced by the larger family $\rho_k^{(Q_{\alpha,0}-2\alpha)/2}v_k(\rho_k x+f_k,\rho_k^{\alpha}y+e_k)$ with $(f_k,e_k)\in\R^n\times\R^m$. For $\beta>0$ this extension is not feasible, and it is precisely the loss of $x$-translation invariance that confines the concentration set to the hyperplane $\Sigma$.
\end{remark}
\begin{remark}

The coefficient \((1+\beta)^2\) in \eqref{eq:operator-intro} is only a normalization constant. More generally, one may replace it by any fixed constant \(\Theta>0\). All functional inequalities and compactness results obtained in this paper remain valid for such an operator, with constants depending also on \(\Theta\). The choice \(\Theta=(1+\beta)^2\) is made only for consistency with a standard normalization in the Grushin literature. Since the present paper does not use an explicit fundamental solution, this coefficient plays no essential role in the proofs.
\end{remark}

\noindent\textbf{Plan of the paper.} In Section~\ref{sec:setting} we introduce the energy spaces associated with $\mathcal L_{\alpha,\beta}$ and prove some of its associated properties. Section~\ref{sec:poincare} is devoted to the uniform fractional Grushin-Poincar\'e inequality and to the resulting subcritical Sobolev embeddings. In Section~\ref{sec:riesz} we establish the critical Sobolev inequality by means of a dyadic Riesz-type potential, and derive the local compactness properties required in the sequel. Finally, Section~\ref{sec:optimal} develops the concentration-compactness principle, localizes concentration on the degenerate hyperplane, and proves the existence of extremals for the optimal Sobolev constant.

\vspace{0.5cm}

\textbf{Notations.} We end this section by collecting the notations used throughout the paper.

\begin{itemize}[leftmargin=2.9cm,itemsep=1pt]
\item[$z=(x,y)$] a point of $\R^{n}\times\R^{m}$; $N:=n+m$ is the Euclidean (topological) dimension, and $dz=dx\,dy$ denotes Lebesgue measure on $\R^{n+m}$, also written $\mu$.

\item[$\rho_{\alpha,\beta}$, $d_{\alpha,\beta}$] the homogeneous quasi-norm $\rho_{\alpha,\beta}(x,y)=\big(|x|^{2\gamma}+|y|^{2}\big)^{1/2\gamma}$ of~\eqref{eq:quasi-norm} and the induced quasi-distance $d_{\alpha,\beta}(z,z')=\rho_{\alpha,\beta}(z-z')$; $B_d(z_0,r)$ is the corresponding ball, written $B_r$ when $z_0=0$.
\item[$\mathcal C(z_0,r)$] the anisotropic Grushin cylinder $B^{n}_{r}(x_0)\times B^{m}_{r^{\gamma}}(y_0)$, written interchangeably as $\mathcal C_{r}(z_0)$, and as $\mathcal C_{r}$ when $z_0=0$.
\item[$V_R(z_0)$] the collar of $\mathcal C_R(z_0)$, defined in~\eqref{eq:VR}.
\item[{$[u]_{x,\alpha;\Omega}$}] the fractional $x$-seminorm \eqref{eq:seminorm}, abbreviated $[u]_{x,\alpha}$ when $\Omega=\R^{n+m}$, and $D_{x,\alpha}u$ the corresponding pointwise energy density \eqref{eq:density}.
\item[$\nabla_{\mathcal L}u$] the Grushin gradient \eqref{eq:grushin-gradient}, and $\mathcal E_{\alpha,\beta}(u)$ the associated energy \eqref{eq:energy}, with polarization $\mathcal B_{\alpha,\beta}$ \eqref{eq:bilinear}.
\item[$\mathcal D^{\alpha,2}_\beta$] the homogeneous energy space \eqref{eq:D-space}, and $S_{\alpha,\beta}(n,m)$ its sharp Sobolev constant (Section~\ref{sec:optimal}).
\item[$\mathcal J^{\mathcal C_R}_{\alpha,2}$] the dyadic Riesz-type potential \eqref{eq:reisz_potential}, and $\mathcal M_{V_R}$ the associated restricted Hardy-Littlewood maximal operator.
\item[$\mathcal M(\Omega)$] the space of finite signed Radon measures on $\Omega$; $\stackrel{*}{\rightharpoonup}$ denotes weak-$*$ convergence in $\mathcal M(\Omega)$, and $\delta_{z_0}$ the Dirac mass at $z_0$.
\item[$\fint_E f$] the average integral $\fint_E f:=|E|^{-1}\int_E f$ of an integrable (or nonnegative measurable) function $f$ over a measurable set $E$ of finite positive measure, where $E$ may lie in $\R^{n}$, $\R^{m}$ or $\R^{n+m}$ and $|E|$ is its Lebesgue measure in the corresponding dimension; in particular $u_\Omega=\fint_\Omega u\,dz$.
\item[$C,C_1,C_2,\dots$] positive constants depending only on $n,m,\alpha,\beta$, unless further dependence is indicated, and allowed to change from line to line.
\end{itemize}

\section{Functional Framework and Anisotropic geometry}
\label{sec:setting}
In this section we introduce the functional framework associated with \(\mathcal L_{\alpha,\beta}\). We first define the energy spaces and establish the density of smooth compactly supported functions.

We define the partially fractional Grushin-type operator
\[
\mathcal L_{\alpha,\beta}u
=(-\Delta_x)^\alpha u-(1+\beta)^2|x|^{2\beta}\Delta_y u
\quad\text{in }\R^n\times\R^m,
\]
where $0<\alpha<1$, $\beta\ge 0$,  and $(-\Delta_x)^\alpha$ denotes the fractional Laplacian acting in the $x$-variable.

We introduce the fractional seminorm in the $x$-variable. For $u$ measurable and $\Omega\subseteq\R^{n+m}$ measurable we set
\begin{equation}
\label{eq:seminorm}
[u]_{x,\alpha;\Omega}^2
:=\frac12\int_{\R^m}\int_{\R^n}\int_{\R^n}
\frac{|u(x,y)-u(\xi,y)|^2}{|x-\xi|^{n+2\alpha}}\,
\chi_\Omega(x,y)\,\chi_\Omega(\xi,y)\,dx\,d\xi\,dy ,
\end{equation}
the integrand being understood to vanish whenever either point lies outside $\Omega$, so that $[u]_{x,\alpha;\Omega}$ depends only on $u|_\Omega$; we abbreviate $[u]_{x,\alpha}:=[u]_{x,\alpha;\R^{n+m}}$. Correspondingly, for $u$ measurable on all of $\R^{n+m}$, the pointwise energy density in the $x$-variable is
\begin{equation}
\label{eq:density}
D_{x,\alpha}u(x,y):=\frac12\int_{\R^n}
\frac{|u(x,y)-u(\xi,y)|^2}{|x-\xi|^{n+2\alpha}}\,d\xi ,
\qquad\text{so that}\qquad
[u]_{x,\alpha}^2=\int_{\R^{n+m}}D_{x,\alpha}u(x,y)\,dx\,dy ,
\end{equation}
and, throughout the paper, we write with a slight abuse of notation
\begin{equation}
\label{eq:grushin-gradient}
|\nabla_{\mathcal L}u(x,y)|^2
:=D_{x,\alpha}u(x,y)+(1+\beta)^2|x|^{2\beta}|\nabla_y u(x,y)|^2 ,
\end{equation}
calling $\nabla_{\mathcal L}u$ the \emph{Grushin gradient} of $u$; the symbol $\nabla_{\mathcal L}u$ is used only through the quantity $|\nabla_{\mathcal L}u|^2$, which is the energy density of $\mathcal L_{\alpha,\beta}$ and which integrates to $\mathcal E_{\alpha,\beta}(u)$ as in \eqref{eq:energy}.

Consequently, for every measurable $\Omega\subseteq\R^{n+m}$ and every $u$ defined on all of $\R^{n+m}$,
\begin{equation}
\label{eq:gradient-on-omega}
\big\|\nabla_{\mathcal L}u\big\|_{L^2(\Omega)}^2
=\int_\Omega D_{x,\alpha}u\,dz
+(1+\beta)^2\int_\Omega|x|^{2\beta}|\nabla_yu|^2\,dz
\ \ge\
[u]_{x,\alpha;\Omega}^2
+(1+\beta)^2\int_\Omega|x|^{2\beta}|\nabla_yu|^2\,dz ,
\end{equation}
and in general the inequality is strict. The difference between the two sides is computed in the next lemma.

\begin{lemma}
\label{lem:energy-comparison} Let $u$ be measurable on $\R^{n+m}$ and let $\Omega\subseteq\R^{n+m}$ be measurable. Writing $\Omega_y:=\{x\in\R^n:(x,y)\in\Omega\}$, one has
\[
\int_\Omega D_{x,\alpha}u\,dz
=[u]^2_{x,\alpha;\Omega}
+\frac12\int_{\R^m}\int_{\Omega_y}\int_{\R^n\setminus\Omega_y}
\frac{|u(x,y)-u(\xi,y)|^2}{|x-\xi|^{n+2\alpha}}\,d\xi\,dx\,dy .
\]
In particular
\[
[u]^2_{x,\alpha;\Omega}\ \le\ \int_\Omega D_{x,\alpha}u\,dz\ \le\ [u]^2_{x,\alpha},
\]
and both inequalities are strict in general.
\end{lemma}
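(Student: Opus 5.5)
The plan is to prove the identity by Tonelli's theorem, splitting the inner $\xi$-integral in $D_{x,\alpha}u$ according to whether $\xi\in\Omega_y$. All integrands are nonnegative, so every manipulation is justified, with values allowed in $[0,\infty]$.

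For fixed $y$ and $x\in\Omega_y$ we have $\chi_\Omega(x,y)=\chi_{\Omega_y}(x)$. Hence
\[
\int_\Omega D_{x,\alpha}u\,dz=\frac12\int_{\R^m}\int_{\Omega_y}\Big(\int_{\Omega_y}+\int_{\R^n\setminus\Omega_y}\Big)\frac{|u(x,y)-u(\xi,y)|^2}{|x-\xi|^{n+2\alpha}}\,d\xi\,dx\,dy .
\]
In the definition \eqref{eq:seminorm}, the product $\chi_\Omega(x,y)\chi_\Omega(\xi,y)$ equals $\chi_{\Omega_y}(x)\chi_{\Omega_y}(\xi)$. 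So the first piece is exactly $[u]^2_{x,\alpha;\Omega}$, and the second piece is the stated cross term. This proves the identity.

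For the chain of inequalities, the left one follows because the cross term is nonnegative. For the right one, write
\[
[u]^2_{x,\alpha}=\int_{\R^{n+m}}D_{x,\alpha}u\,dz
\]
by \eqref{eq:density}, and note that $D_{x,\alpha}u\ge0$, so restricting the integral to $\Omega$ can only decrease it.

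To see that the inequalities are strict in general, I would give one example. Take $\Omega=\mathcal C_1$ and $u\in C_c^\infty$ with $u=1$ on $\mathcal C_1$ and $u\not\equiv 1$ on $B_2^n\times B_1^m$.
\begin{itemize}
\item The left inequality is strict: $[u]_{x,\alpha;\Omega}=0$, while the cross term is positive because $u(x,y)-u(\xi,y)\neq0$ on a set of positive measure with $x\in B_1^n$, $\xi\notin B_1^n$.
\item The right inequality is strict: $D_{x,\alpha}u>0$ on a positive-measure set outside $\Omega$.
\end{itemize}

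None of these steps is a real obstacle. The only point needing care is the use of Tonelli's theorem for nonnegative measurable integrands, so that the identity holds even when some of the terms are infinite.
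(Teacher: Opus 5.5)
Your proof is correct and follows the paper's argument: Tonelli's theorem applied slice-wise in $y$, with the $\xi$-integral split over $\Omega_y$ and $\R^n\setminus\Omega_y$, and both inequalities coming from nonnegativity of the integrands. Your example for strictness, which the paper does not spell out, is also valid; in fact compact support of $u$ alone already makes the cross term positive, so your extra condition on $B_2^n\times B_1^m$ is unnecessary.
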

\begin{proof}
The proof follows easily from Tonelli's theorem by writing \(\int_\Omega D_{x,\alpha}u\,dz\) slice-wise in \(y\) and splitting \(\mathbb R^n=\Omega_y\cup(\mathbb R^n\setminus\Omega_y)\).
\end{proof}

\begin{remark}
The quantity $\int_\Omega D_{x,\alpha}u\,dz$ depends on the values of $u$ outside $\Omega$ and is therefore not intrinsic to $\Omega$, whereas $[u]_{x,\alpha;\Omega}$ is. For this reason the local spaces below are defined through $[\,\cdot\,]_{x,\alpha;\Omega}$, while $\nabla_{\mathcal L}u$ and the quantity $\|\nabla_{\mathcal L}u\|_{L^2(\Omega)}$ are used only for functions defined on all of $\R^{n+m}$; Lemma~\ref{lem:energy-comparison} relates the two. When $u$ has compact support inside $\Omega$, however, a bound in the opposite direction is available, as the following corollary shows.
\end{remark}

\begin{corollary}
\label{cor:support-comparison} Let $\Omega\subseteq\R^{n+m}$ be open and let $u$ be measurable on $\R^{n+m}$ with $\operatorname{supp}u$ a compact subset of $\Omega$. Then
\[
[u]^2_{x,\alpha}\ \le\ 2\int_\Omega D_{x,\alpha}u\,dz,
\qquad\text{and consequently}\qquad
\int_{\R^{n+m}}|\nabla_{\mathcal L}u|^2\,dz\ \le\ 2\int_\Omega|\nabla_{\mathcal L}u|^2\,dz .
\]
\end{corollary}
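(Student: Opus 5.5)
The argument is a direct computation with Tonelli's theorem, using that $u$ vanishes outside $\Omega$. Write $K:=\operatorname{supp}u$ and, for each $y\in\R^m$, the slices $\Omega_y$ and $K_y:=\{x:(x,y)\in K\}\subseteq\Omega_y$. By definition,
\[
[u]^2_{x,\alpha}=\frac12\int_{\R^m}\int_{\R^n}\int_{\R^n}\frac{|u(x,y)-u(\xi,y)|^2}{|x-\xi|^{n+2\alpha}}\,d\xi\,dx\,dy .
\]
For fixed $y$, we split the $(x,\xi)$-domain $\R^n\times\R^n$ according to whether $x\in\Omega_y$ and whether $\xi\in\Omega_y$. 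The integrand vanishes whenever both $x,\xi\notin\Omega_y$, since then $u(x,y)=u(\xi,y)=0$. This leaves three pieces: both points in $\Omega_y$; $x\in\Omega_y$ with $\xi\notin\Omega_y$; and $x\notin\Omega_y$ with $\xi\in\Omega_y$. The integrand is symmetric in $(x,\xi)$, so the last two pieces are equal.

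Hence
\[
[u]^2_{x,\alpha}=[u]^2_{x,\alpha;\Omega}+2\cdot\frac12\int_{\R^m}\int_{\Omega_y}\int_{\R^n\setminus\Omega_y}\frac{|u(x,y)-u(\xi,y)|^2}{|x-\xi|^{n+2\alpha}}\,d\xi\,dx\,dy .
\]
Lemma~\ref{lem:energy-comparison} gives the identity
\[
\int_\Omega D_{x,\alpha}u\,dz=[u]^2_{x,\alpha;\Omega}+I,
\]
where $I$ denotes the same cross integral. Combining the two,
\[
[u]^2_{x,\alpha}=[u]^2_{x,\alpha;\Omega}+2I\le 2\big([u]^2_{x,\alpha;\Omega}+I\big)=2\int_\Omega D_{x,\alpha}u\,dz,
\]
because $[u]^2_{x,\alpha;\Omega}\ge0$. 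This proves the first inequality.

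For the second inequality, observe first that $\nabla_y u=0$ a.e.\ outside $\Omega$, since $u$ vanishes on the open set $\R^{n+m}\setminus K$. Therefore the weighted $y$-term satisfies
\[
\int_{\R^{n+m}}|x|^{2\beta}|\nabla_yu|^2\,dz=\int_\Omega|x|^{2\beta}|\nabla_yu|^2\,dz\le 2\int_\Omega|x|^{2\beta}|\nabla_yu|^2\,dz .
\]
For the $x$-term, $\int_{\R^{n+m}}D_{x,\alpha}u\,dz=[u]^2_{x,\alpha}$ by \eqref{eq:density}, and this is at most $2\int_\Omega D_{x,\alpha}u\,dz$ by the first step. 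Adding the two bounds and recalling \eqref{eq:grushin-gradient} gives
\[
\int_{\R^{n+m}}|\nabla_{\mathcal L}u|^2\,dz\le 2\int_\Omega|\nabla_{\mathcal L}u|^2\,dz .
\]

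The one point needing care is measure-theoretic. If either side is infinite the inequalities hold trivially; otherwise Tonelli's theorem justifies all the splittings, since every integrand is nonnegative. Compactness of the support is used only to ensure $u=0$ off $\Omega$. The interpretation of $\nabla_y u$ as a weak derivative vanishing off the support is standard, and I expect no real obstacle there.
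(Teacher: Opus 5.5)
Your proof is correct and is the paper's own argument written out in full: the paper also combines the identity of Lemma~\ref{lem:energy-comparison} with the vanishing of $u$ off $\Omega$, which makes the full seminorm equal to $[u]^2_{x,\alpha;\Omega}$ plus twice the cross term, and it notes that the $y$-gradient term is supported in $\Omega$. One small remark: your case distinction on finiteness is unnecessary, since by Tonelli all the splittings are identities in $[0,\infty]$ (and the case ``left side infinite'' would not in fact be trivial).
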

\begin{proof}
Since \(u=0\) on \(\mathbb R^{n+m}\setminus\Omega\), fractional seminorm is controlled by twice the energy density integrated over \(\Omega\), by Lemma~\ref{lem:energy-comparison}; the \(y\)-gradient term is supported in \(\Omega\), which gives the second estimate.
\end{proof}

\smallskip We can now introduce the natural energy space associated with $\mathcal L_{\alpha,\beta}$, directly on an arbitrary domain. For $\Omega\subseteq\R^n\times\R^m$ we write $\Omega^{x}:=\{y\in\R^m:(x,y)\in\Omega\}$ for the slice at fixed $x\in\R^n$, and $\Omega_{y}:=\{x\in\R^n:(x,y)\in\Omega\}$ for the slice at fixed $y\in\R^m$, and we set
\begin{equation}
\label{eq:H-space}
\begin{aligned}
\mathcal H_{\alpha,\beta}(\Omega):=\Big\{u\in L^2(\Omega):\ \
&[u]_{x,\alpha;\Omega}<+\infty,\quad u(x,\cdot)\in W^{1,1}_{\mathrm{loc}}(\Omega^{x})\ \text{for a.e.\ }x\in\R^n,\\
& \text{and } |x|^\beta\nabla_yu\in L^2(\Omega;\R^m)\Big\},
\end{aligned}
\end{equation}
where $\nabla_yu$ denotes the slice-wise weak gradient, and we abbreviate $\mathcal H_{\alpha,\beta}:=\mathcal H_{\alpha,\beta}(\R^{n+m})$.

\begin{remark}
If $2\beta<n$, then $|x|^{-\beta}\in L^2_{\mathrm{loc}}(\R^n)$, so that for every compact $K\subset\Omega$
\[
\int_K|\nabla_yu|\,dz
\le\big\||x|^\beta\nabla_yu\big\|_{L^2(\Omega)}\,
\big\||x|^{-\beta}\big\|_{L^2(K)}<+\infty ;
\]
hence $\nabla_yu\in L^1_{\mathrm{loc}}(\Omega)$ and the slice-wise weak gradient coincides a.e.\ with the distributional $y$-gradient on $\Omega$. For $2\beta\ge n$ this need not hold across the degeneracy set $\Sigma:=\{(x,y):x=0\}$, and the two notions still agree a.e.\ on $\Omega\setminus\Sigma$. In what follows $\nabla_yu$ always denotes the slice-wise gradient, which is defined for a.e.\ $x$ and needs no integrability across $\Sigma$. Local integrability of $\nabla_yu$ in $(x,y)$ jointly is used only in the proof of Lemma~\ref{lem:poincare-cell}, on a set where $|x|$ is bounded below, and in the density argument below, where it follows from \eqref{eq:dens-gradbdd}.
\end{remark}

The space $\mathcal H_{\alpha,\beta}(\Omega)$ is endowed with the norm
\[
\|u\|_{\mathcal H_{\alpha,\beta}(\Omega)}^2
:=[u]_{x,\alpha;\Omega}^2
+(1+\beta)^2\big\||x|^\beta\nabla_y u\big\|_{L^2(\Omega)}^2
+\|u\|_{L^2(\Omega)}^2 .
\]
For $u,v\in\mathcal H_{\alpha,\beta}(\Omega)$ we set
\[
\ll u,v\gg_{x,\alpha;\Omega}
:=\! \frac12\int_{\R^m}\!\int_{\R^n}\!\int_{\R^n} \! \!
\frac{\big(u(x,y)-u(\xi,y)\big)\big(v(x,y)-v(\xi,y)\big)}{|x-\xi|^{n+2\alpha}}
\,\chi_\Omega(x,y)\,\chi_\Omega(\xi,y)dx\,d\xi\,dy,
\]
and define
\[
(u,v)_{\mathcal H_{\alpha,\beta}(\Omega)}
:=\ll u,v\gg_{x,\alpha;\Omega}
+(1+\beta)^2\int_{\Omega}|x|^{2\beta}\nabla_y u\cdot\nabla_y v\,dx\,dy
+\int_{\Omega}uv\,dx\,dy ,
\]
writing $\ll u,v\gg_{x,\alpha}:=\ll u,v\gg_{x,\alpha;\R^{n+m}}$ and $(\cdot,\cdot)_{\mathcal H_{\alpha,\beta}}:=(\cdot,\cdot)_{\mathcal H_{\alpha,\beta}(\R^{n+m})}$. It is easy to verify that $\big(\mathcal H_{\alpha,\beta}(\Omega),(\cdot,\cdot)_{\mathcal H_{\alpha,\beta}(\Omega)}\big)$ is a Hilbert space whenever \(\Omega\) is open, or more generally whenever the slices \(\Omega_x\) are open for a.e. \(x\).

Finally, the energy $\mathcal E_{\alpha,\beta}$ is a quadratic form, whose corresponding bilinear form is given by
\begin{equation} \label{eq:bilinear}
\begin{aligned}
\mathcal B_{\alpha,\beta}(u,v)
:=&\frac12\int_{\R^m}\int_{\R^n}\int_{\R^n}
\frac{\big(u(x,y)-u(\xi,y)\big)\big(v(x,y)-v(\xi,y)\big)}{|x-\xi|^{n+2\alpha}}
\,dx\,d\xi\,dy \\
&+(1+\beta)^2\int_{\R^{n+m}}|x|^{2\beta}\,\nabla_y u\cdot\nabla_y v\,dx\,dy ,
\end{aligned}
\end{equation}
so that $\mathcal B_{\alpha,\beta}(u,u)=\mathcal E_{\alpha,\beta}(u)$ for every $u$ for which the right-hand side of \eqref{eq:energy} is finite, and the homogeneous energy space associated with $\mathcal L_{\alpha,\beta}$ is
\begin{equation}
\label{eq:D-space}
\mathcal D^{\alpha,2}_\beta(\R^{n+m})
:=\overline{C_c^\infty(\R^{n+m})}^{\;\|\cdot\|_{\mathcal D^{\alpha,2}_\beta}},
\end{equation}
that is, the completion of $C_c^\infty(\R^{n+m})$ with respect to the energy norm given by
\begin{equation*}
    \|u\|_{\mathcal D^{\alpha,2}_\beta}
:=\mathcal E_{\alpha,\beta}(u)^{1/2}
=\Big([u]_{x,\alpha}^2+(1+\beta)^2\int_{\R^{n+m}}|x|^{2\beta}|\nabla_yu|^2\,dz\Big)^{1/2}.
\end{equation*}

\begingroup  

Two stability properties of $\mathcal H_{\alpha,\beta}$ will be used repeatedly, together with the local version of the space that the results of Sections~\ref{sec:riesz} and~\ref{sec:optimal} require.

\begin{remark}
Let $\Omega\subseteq\Omega'\subseteq\R^{n+m}$ be measurable and let $u\in\mathcal H_{\alpha,\beta}(\Omega')$. Then all three terms of the norm are monotone in the domain,
\[
\|u\|_{L^2(\Omega)}\le\|u\|_{L^2(\Omega')},
\qquad
\big\||x|^\beta\nabla_yu\big\|_{L^2(\Omega)}\le\big\||x|^\beta\nabla_yu\big\|_{L^2(\Omega')},
\qquad
[u]_{x,\alpha;\Omega}\le[u]_{x,\alpha;\Omega'},
\]
the last because $\chi_\Omega(x,y)\chi_\Omega(\xi,y)\le\chi_{\Omega'}(x,y)\chi_{\Omega'}(\xi,y)$ pointwise in \eqref{eq:seminorm}. If moreover the slices $\Omega^{x}$ are open for a.e.\ $x\in\R^n$, then $u|_\Omega\in\mathcal H_{\alpha,\beta}(\Omega)$ with $\|u\|_{\mathcal H_{\alpha,\beta}(\Omega)}\le\|u\|_{\mathcal H_{\alpha,\beta}(\Omega')}$, the slice-wise condition of \eqref{eq:H-space} being inherited because a compact subset of $\Omega^{x}$ is a compact subset of $\Omega'^{\,x}$.
\end{remark}

The space $\mathcal H_{\alpha,\beta}(\Omega)$ of \eqref{eq:H-space} is intrinsic to $\Omega$: it is built from the restricted seminorm $[\,\cdot\,]_{x,\alpha;\Omega}$, and it is the right class for the local inequalities of Section~\ref{sec:poincare}, whose right-hand sides involve only the values of $u$ on $\Omega$. It is, however, not suited to Sections~\ref{sec:riesz} and~\ref{sec:optimal}. On $\R^{n+m}$ its norm contains $\|u\|_{L^2(\R^{n+m})}$, whereas functions in the homogeneous space $\mathcal D^{\alpha,2}_\beta(\R^{n+m})$ of \eqref{eq:D-space} lie in $L^{2^*_{\alpha,\beta}}(\R^{n+m})$ but in general not in $L^2(\R^{n+m})$, so that $\mathcal D^{\alpha,2}_\beta(\R^{n+m})\not\subset\mathcal H_{\alpha,\beta}(\R^{n+m})$. Moreover, those sections work with the Grushin gradient \eqref{eq:grushin-gradient}, whose nonlocal part is the unrestricted density $D_{x,\alpha}u$, and by Lemma~\ref{lem:energy-comparison} a bound on $[u]_{x,\alpha;\Omega}$ for bounded $\Omega$ gives no bound on $\int_\Omega D_{x,\alpha}u\,dz$. Accordingly we define the local energy class of $\mathcal L_{\alpha,\beta}$ directly through \eqref{eq:grushin-gradient},
\begin{equation}
\label{eq:H-loc}
\begin{aligned}
\mathcal H^{\mathrm{loc}}_{\alpha,\beta}(\R^{n+m})
:=\Big\{\,u\in L^2_{\mathrm{loc}}(\R^{n+m}): &
u(x,\cdot)\in W^{1,1}_{\mathrm{loc}}(\R^{m})\\ & \text{for a.e. }x\in\R^n, \ \nabla_{\mathcal L}u\in L^2_{\mathrm{loc}}(\R^{n+m})\Big\}.
\end{aligned}
\end{equation}

\begin{remark}
\label{rem:Hloc-restricts} If $u\in\mathcal H^{\mathrm{loc}}_{\alpha,\beta}(\R^{n+m})$ then $u|_\Omega\in\mathcal H_{\alpha,\beta}(\Omega)$ for every bounded open $\Omega\subset\R^{n+m}$, so the inequalities of Section~\ref{sec:poincare} apply to $u$ on every anisotropic cylinder. Indeed $u\in L^2(\Omega)$ and the slice-wise condition hold by hypothesis, while \eqref{eq:grushin-gradient} and Lemma~\ref{lem:energy-comparison} give
\[
[u]^2_{x,\alpha;\Omega}\ \le\ \int_\Omega D_{x,\alpha}u\,dz
\ \le\ \int_\Omega|\nabla_{\mathcal L}u|^2\,dz<+\infty,
\qquad
(1+\beta)^2\!\int_\Omega|x|^{2\beta}|\nabla_yu|^2dz\le\int_\Omega|\nabla_{\mathcal L}u|^2dz .
\]
In particular, since every set of type $V_R$ defined in \eqref{eq:VR} is bounded, $\nabla_{\mathcal L}u\in L^2(V_R)$ for every $z_0$ and every $R>0$.
\end{remark}

Clearly $C_c^\infty(\R^{n+m})\subset\mathcal H^{\mathrm{loc}}_{\alpha,\beta}(\R^{n+m})$.

\begin{lemma}[Lipschitz stability]
\label{lem:lip-stable} Let $\Omega\subseteq\R^{n+m}$ be measurable with open slices, let $F:\R\to\R$ be $L$-Lipschitz with $F(0)=0$, and let $u\in\mathcal H_{\alpha,\beta}(\Omega)$. Then $F\circ u\in\mathcal H_{\alpha,\beta}(\Omega)$ and
\begin{equation}
\label{eq:lip-stable}
|F\circ u|\le L\,|u|\ \text{ a.e.},
\qquad
[F\circ u]_{x,\alpha;\Omega}\le L\,[u]_{x,\alpha;\Omega},
\qquad
\big|\nabla_y(F\circ u)\big|\le L\,\big|\nabla_yu\big|\ \text{ a.e.\ in }\Omega .
\end{equation}
In particular, for every $0\le t_1<t_2$ the truncation $T^{t_2}_{t_1}[u]:=\min\{\max\{0,u-t_1\},\,t_2-t_1\}$ and $u_\pm:=\max\{\pm u,0\}\in \mathcal H_{\alpha,\beta}(\Omega)$ and satisfy \eqref{eq:lip-stable} with $L=1$; moreover
\begin{equation*}
\nabla_y T^{t_2}_{t_1}[u]=\nabla_yu\,\chi_{\{t_1<u<t_2\}}
\qquad\text{a.e.\ in }\Omega .
\end{equation*}
All of this holds verbatim for $u\in\mathcal H^{\mathrm{loc}}_{\alpha,\beta}(\R^{n+m})$, applied on each bounded open $\Omega$ via Remark~\ref{rem:Hloc-restricts}. Finally, if $u$ is defined on all of $\R^{n+m}$ then the \emph{unrestricted} density \eqref{eq:density} and the Grushin gradient \eqref{eq:grushin-gradient} obey the same bound pointwise,
\begin{equation}
\label{eq:lip-stable-density}
D_{x,\alpha}(F\circ u)\le L^2\,D_{x,\alpha}u
\qquad\text{and}\qquad
\big|\nabla_{\mathcal L}(F\circ u)\big|\le L\,\big|\nabla_{\mathcal L}u\big|
\qquad\text{a.e.\ in }\R^{n+m},
\end{equation}
and both are unchanged by the addition of a constant to $u$.
\end{lemma}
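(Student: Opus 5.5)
The pointwise inequality $|F\circ u|\le L|u|$ is immediate from $F(0)=0$ and the Lipschitz bound: $|F(u)|=|F(u)-F(0)|\le L|u|$. It also gives $F\circ u\in L^2(\Omega)$, and measurability is clear because $F$ is continuous.

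For the fractional part I will use $|F(u(x,y))-F(u(\xi,y))|^2\le L^2|u(x,y)-u(\xi,y)|^2$ pointwise. Inserting this into the integrand of \eqref{eq:seminorm}, with the characteristic functions $\chi_\Omega(x,y)\chi_\Omega(\xi,y)$ untouched, yields $[F\circ u]_{x,\alpha;\Omega}\le L[u]_{x,\alpha;\Omega}$. The same argument applied to \eqref{eq:density}, now without any cutoff, gives $D_{x,\alpha}(F\circ u)\le L^2D_{x,\alpha}u$ pointwise. Since only differences $u(x,y)-u(\xi,y)$ appear, replacing $u$ by $u+c$ (with $F$ replaced by $F(\cdot+c)$, still $L$-Lipschitz) changes nothing.

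The local part is the main technical step, and I will do it slice-wise. Fix $x$ outside a null set so that $u(x,\cdot)\in W^{1,1}_{\mathrm{loc}}(\Omega^x)$ and $\Omega^x$ is open. The chain rule for Lipschitz functions composed with Sobolev functions (e.g.\ Ziemer or Ambrosio–Dal Maso) gives $F\circ u(x,\cdot)\in W^{1,1}_{\mathrm{loc}}(\Omega^x)$. Its gradient is $F'(u)\nabla_yu$ a.e., with the convention that $\nabla_yu=0$ a.e.\ on level sets $\{u=t\}$; this handles the points where $F'$ fails to exist, since $F'$ exists off a null set of values $t$ and $\nabla_y u=0$ a.e.\ on $u^{-1}(N)$ for null $N$. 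Hence $|\nabla_y(F\circ u)|\le L|\nabla_yu|$ a.e.\ on each slice. Measurability in $(x,y)$ of the slice-wise gradient follows because it can be written as a limit of difference quotients. Fubini then gives a.e.\ in $\Omega$ and $|x|^\beta\nabla_y(F\circ u)\in L^2$. Adding this to the fractional bound, $F\circ u\in\mathcal H_{\alpha,\beta}(\Omega)$. Combining the local bound with the $D_{x,\alpha}$ bound in \eqref{eq:grushin-gradient} gives $|\nabla_{\mathcal L}(F\circ u)|^2\le L^2|\nabla_{\mathcal L}u|^2$.

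Truncations and positive and negative parts are the special cases $F(s)=\min\{\max\{0,s-t_1\},t_2-t_1\}$, which vanishes at $0$ because $t_1\ge0$, and $F(s)=\max\{\pm s,0\}$; all have $L=1$. For the truncation, the chain rule gives $\nabla_yT^{t_2}_{t_1}[u]=\chi_{\{t_1<u<t_2\}}\nabla_yu$ off the level sets $\{u=t_1\}$ and $\{u=t_2\}$, where $\nabla_yu=0$ a.e. by the one-dimensional/Sobolev level-set lemma applied on slices. The $\mathcal H^{\mathrm{loc}}$ statement follows by applying all of the above on each bounded open $\Omega$, via Remark~\ref{rem:Hloc-restricts}, together with the unrestricted density bound.
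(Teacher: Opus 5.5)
Your proposal is correct and follows essentially the same route as the paper: the pointwise Lipschitz bound on the differences $u(x,y)-u(\xi,y)$ handles both the restricted seminorm and the unrestricted density $D_{x,\alpha}$, and the $y$-part is done slice-wise via the chain rule for Lipschitz maps of $W^{1,1}_{\mathrm{loc}}$ functions followed by Fubini (the paper cites Marcus--Mizel where you cite Ziemer or Ambrosio--Dal Maso). Your additional remarks on the truncation formula via the level-set fact ($\nabla_y u=0$ a.e.\ on $\{u=t\}$) and on the joint measurability of the slice-wise gradient fill in details the paper leaves implicit.
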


\begin{proof}
Since $F(0)=0$ we have $|F(s)|=|F(s)-F(0)|\le L|s|$ for every $s\in\R$, which gives the first assertion of \eqref{eq:lip-stable} and, in particular, $F\circ u\in L^2(\Omega)$.

For the second, the Lipschitz bound applied to the pair $\big(u(x,y),u(\xi,y)\big)$ gives
\[
\big|F(u(x,y))-F(u(\xi,y))\big|^2\le L^2\,\big|u(x,y)-u(\xi,y)\big|^2
\]
pointwise; multiplying by $\tfrac12|x-\xi|^{-(n+2\alpha)}\chi_\Omega(x,y)\chi_\Omega(\xi,y)$ and integrating in $(x,\xi,y)$ yields $[F\circ u]^2_{x,\alpha;\Omega}\le L^2[u]^2_{x,\alpha;\Omega}$ by \eqref{eq:seminorm}.

For the third, fix $x\in\R^n$ with $u(x,\cdot)\in W^{1,1}_{\mathrm{loc}}(\Omega^{x})$, which is the case for a.e.\ $x$. The composition of an $L$-Lipschitz function with a $W^{1,1}_{\mathrm{loc}}$ function is again $W^{1,1}_{\mathrm{loc}}$, with $|\nabla(F\circ w)|\le L|\nabla w|$ a.e.; see Theorem $1$ of \cite{marcus_mizel}. Applying this on the slice $\Omega^{x}$ to $w=u(x,\cdot)$ and then using Fubini's theorem gives the stated a.e.\ bound on $\Omega$; multiplying by $|x|^{2\beta}$ and integrating shows $|x|^\beta\nabla_y(F\circ u)\in L^2(\Omega;\R^m)$, so $F\circ u\in\mathcal H_{\alpha,\beta}(\Omega)$.
\end{proof}
\endgroup

We start with a proposition that gives density of $C_c^\infty(\R^{n+m})$ in $\mathcal H_{\alpha,\beta}(\R^{n+m})$.
\begin{proposition}
Let $0<\alpha<1$ and $\beta\ge0$. Then $C_c^\infty(\R^{n+m})$ is dense in $\big(\mathcal H_{\alpha,\beta}(\R^{n+m}),\|\cdot\|_{\mathcal H_{\alpha,\beta}}\big)$, that is, \( \mathcal H_{\alpha,\beta}(\R^{n+m}) =\overline{C_c^\infty(\R^{n+m})}^{\;\|\cdot\|_{\mathcal H_{\alpha,\beta}}}. \)
\end{proposition}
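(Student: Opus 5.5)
The density will be proved in three steps, carried out in order: truncation in value, cut-off in space, and mollification. The space $\mathcal H_{\alpha,\beta}$ is invariant under none of these for free, because of the weight $|x|^{2\beta}$ and the partially nonlocal seminorm.

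\emph{Step 1 (boundedness).} For $u\in\mathcal H_{\alpha,\beta}$, set $u_M:=\max\{-M,\min\{u,M\}\}$. By Lemma~\ref{lem:lip-stable} with $L=1$, $u_M\in\mathcal H_{\alpha,\beta}$ with all three norm terms dominated by those of $u$. Moreover:
\begin{itemize}
\item $|u-u_M|\le|u|$;
\item $\nabla_y(u-u_M)=\nabla_yu\,\chi_{\{|u|>M\}}$;
\item the fractional integrand of $u-u_M$ is bounded by $|u(x,y)-u(\xi,y)|^2|x-\xi|^{-n-2\alpha}$ and tends to zero a.e.
\end{itemize}
Dominated convergence then gives $u_M\to u$ in $\mathcal H_{\alpha,\beta}$. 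So we may assume $u\in L^\infty$.

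\emph{Step 2 (compact support).} Take $\eta_R(x,y)=\eta(x/R)\,\theta(y/R^{\gamma})$ with standard cut-offs $\eta,\theta$, and estimate $u-\eta_Ru$. The $L^2$ term and the term $|x|^{2\beta}\eta_R^2|\nabla_yu|^2$ are handled by dominated convergence. The product-rule term satisfies
\[
\int|x|^{2\beta}|u|^2|\nabla_y\eta_R|^2\,dz\le CR^{-2\gamma}\int_{\{|x|<2R\}}|x|^{2\beta}|u|^2\,dz\le CR^{2\beta-2\gamma}\|u\|_{L^2}^2=CR^{-2\alpha}\|u\|_{L^2}^2\to0.
\]
For the nonlocal part, use
\[
(\eta_Ru)(x)-(\eta_Ru)(\xi)=\eta_R(x)\bigl(u(x)-u(\xi)\bigr)+u(\xi)\bigl(\eta_R(x)-\eta_R(\xi)\bigr).
\]
The first piece converges by dominated convergence. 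The second is bounded by $\int|u(\xi,y)|^2\int|\eta_R(x,y)-\eta_R(\xi,y)|^2|x-\xi|^{-n-2\alpha}\,dx$, and the inner integral is $\le CR^{-2\alpha}$ by scaling. We then apply the usual splitting into $\{|\xi|<R/2\}$ and its complement; in fact it suffices to apply the argument to $1-\eta_R$, whose $x$-seminorm kernel integral is pointwise $\le C\min\{1,R^{-2\alpha}\}$ and tends to zero, so dominated convergence gives the limit.

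\emph{Step 3 (mollification; main obstacle).} For bounded, compactly supported $u$, set $u_\varepsilon=u*\varphi_\varepsilon$ with a product mollifier $\varphi_\varepsilon(x,y)=\phi_\varepsilon(x)\psi_\varepsilon(y)$.
\begin{itemize}
\item \emph{$L^2$ and fractional terms.} Convolution commutes with the difference quotient in $x$, and $[\cdot]_{x,\alpha}$ is translation invariant. Hence $[u_\varepsilon-u]_{x,\alpha}\to0$ by Minkowski's inequality and continuity of translations in the weighted $L^2(dx\,d\xi\,dy)$ space of difference quotients; $L^2$ convergence is standard.
\item \emph{Weighted gradient term.} We have $\nabla_yu_\varepsilon=(\nabla_yu)*\varphi_\varepsilon$, valid slice-wise and then jointly on $\{|x|>\delta\}$, where $\nabla_yu\in L^1_{\rm loc}$. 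The difficulty is that the weight $|x|^{2\beta}$ is not translation invariant in $x$ and $\nabla_yu$ may fail to be locally integrable near $\Sigma$ when $2\beta\ge n$.
\end{itemize}
To handle this, first mollify only in $y$. This preserves the slice structure, gives $\nabla_y(u*\psi_\varepsilon)=(\nabla_yu)*_y\psi_\varepsilon$ exactly for a.e.\ $x$, and involves no weight in $y$. Hence $|x|^\beta\nabla_y(u*_y\psi_\varepsilon)\to|x|^\beta\nabla_yu$ in $L^2$, fiberwise and by dominated convergence in $x$, while the fractional term is again controlled by translation invariance in $y$. The result is smooth in $y$ with $\nabla_y$ bounded by $C_\varepsilon\|u\|_\infty$, which is the estimate behind \eqref{eq:dens-gradbdd}. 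Then mollify in $x$. For the weighted term, use $|x|^{2\beta}\le C\bigl(|x-h|^{2\beta}+|h|^{2\beta}\bigr)$ for $|h|<\varepsilon$, together with compact support and boundedness of $\nabla_y$, to obtain convergence via continuity of translations of the $L^2$ function $|x|^\beta\nabla_yv$; the commutator error is $O(\varepsilon^\beta)$ on a bounded set. A diagonal choice of parameters concludes the proof.
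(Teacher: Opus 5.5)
Your proposal is correct and follows essentially the same route as the paper: truncation via Lemma~\ref{lem:lip-stable}, the anisotropic cut-off $\eta(x/R,y/R^{\alpha+\beta})$ with the $R^{-\alpha}$ and $R^{-2\alpha}$ bounds, and mollification first in $y$ and then in $x$. The only real difference is the final weighted-gradient step. There the paper avoids your commutator estimate (whose error is actually $O(\varepsilon^{\min\{\beta,1\}})$ rather than $O(\varepsilon^\beta)$) by observing that $\nabla_y g$ is bounded with compact support, hence lies in unweighted $L^2$, while $|x|^{2\beta}$ is bounded on that support.
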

\begin{proof}
Since $C_c^\infty(\R^{n+m})\subset\mathcal H_{\alpha,\beta}(\R^{n+m})$ and the latter is complete, it suffices to approximate an arbitrary $u\in\mathcal H_{\alpha,\beta}(\R^{n+m})$ in the energy norm by test functions.

Now, for $T>0$, put $F_T(t):=\max\{-T,\min\{t,T\}\}$ and $u^T:=F_T\circ u$. Then $t\mapsto t-F_T(t)$ is $1$-Lipschitz and vanishes at $0$, so by  {Lemma~\ref{lem:lip-stable}} applied to it,
\[
\big|(u-u^T)(x,y)-(u-u^T)(\xi,y)\big|\le|u(x,y)-u(\xi,y)|,
\qquad
\nabla_y(u-u^T)=\nabla_yu\,\mathbf 1_{\{|u|>T\}}\ \text{ a.e.}
\]
Both right-hand sides are square integrable against the respective measures, and both left-hand sides tend to $0$ a.e.\ as $T\to\infty$ because $|u|<\infty$ a.e. Dominated convergence therefore gives
\[
\|u^T-u\|_{\mathcal H_{\alpha,\beta}}\longrightarrow0
\qquad\text{as }T\to\infty .
\]
Hence we can assume from now on that $u\in L^\infty(\R^{n+m})$.

Next, fix $\eta\in C_c^\infty(\R^{n+m})$ with $0\le\eta\le1$, $\eta\equiv1$ on $B_1^n\times B_1^m$ and $\eta\equiv0$ outside $B_2^n\times B_2^m$, and for $R\ge1$ set
\[
\eta_R(x,y):=\eta\big(\delta_{1/R}(x,y)\big)=\eta\Big(\frac xR,\frac y{R^{\alpha+\beta}}\Big),
\qquad u_R:=\eta_Ru .
\]
Then $u_R\in\mathcal H_{\alpha,\beta}$, $u_R$ is bounded with compact support, and we claim $\|u_R-u\|_{\mathcal H_{\alpha,\beta}}\to0$.

Since $0\le\eta_R\le1$ and $\eta_R\to1$ pointwise, dominated convergence gives $\|u_R-u\|_{L^2}\to0$. For the weighted gradient, write
\[
|x|^\beta\nabla_y(u_R-u)=(\eta_R-1)\,|x|^\beta\nabla_yu+|x|^\beta u\,\nabla_y\eta_R .
\]
The first term tends to $0$ in $L^2$ by dominated convergence. For the second, $|\nabla_y\eta_R|\le\|\nabla_y\eta\|_\infty R^{-(\alpha+\beta)}$, and $\nabla_y\eta_R$ is supported in $\{|x|\le2R\}$, whence
\begin{equation*}
|x|^\beta|\nabla_y\eta_R|\le\|\nabla_y\eta\|_\infty(2R)^\beta R^{-(\alpha+\beta)}=CR^{-\alpha},
\end{equation*}
so that $\||x|^\beta u\nabla_y\eta_R\|_{L^2}\le CR^{-\alpha}\|u\|_{L^2}\to0$.

For the fractional seminorm set $w_R:=u_R-u=(\eta_R-1)u$ and decompose
\[
w_R(x,y)-w_R(\xi,y)
=\big(\eta_R(x,y)-1\big)\big(u(x,y)-u(\xi,y)\big)
+u(\xi,y)\big(\eta_R(x,y)-\eta_R(\xi,y)\big),
\]
so that $[w_R]_{x,\alpha}^2\le I_R+J_R$ with
\[
I_R:=\int_{\R^m}\!\!\iint_{\R^n\times\R^n}
\frac{|\eta_R(x,y)-1|^2\,|u(x,y)-u(\xi,y)|^2}{|x-\xi|^{n+2\alpha}}\,dx\,d\xi\,dy,
\]
and 
\[
J_R:=\int_{\R^{n+m}}|u(\xi,y)|^2\,K_R(\xi,y)\,d\xi\,dy,
\]
where $K_R(\xi,y):=\int_{\R^n}|x-\xi|^{-n-2\alpha}|\eta_R(x,y)-\eta_R(\xi,y)|^2\,dx$. The integrand of $I_R$ tends to $0$ pointwise and is dominated by $4|u(x,y)-u(\xi,y)|^2|x-\xi|^{-n-2\alpha}\in L^1$, so $I_R\to0$. For $J_R$ we use $|\eta_R(x,y)-\eta_R(\xi,y)|\le\min\{1,\|\nabla_x\eta\|_\infty|x-\xi|/R\}$ to get, uniformly in $(\xi,y)$,
\begin{equation*}
K_R(\xi,y)\!\le \! C\!\int_{\R^n}\frac{\min\{1,|h|^2/R^2\}}{|h|^{n+2\alpha}}\,dh
\!=\!C\Big(R^{-2}\!\!\int_{|h|<R}\!\!|h|^{2-n-2\alpha}dh+\!\!\int_{|h|\ge R}\!\!|h|^{-n-2\alpha}dh\Big)
\!=\!\frac{C(n,\alpha)}{R^{2\alpha}},
\end{equation*}
both integrals being finite precisely because $0<\alpha<1$. Hence $J_R\le C R^{-2\alpha}\|u\|_{L^2}^2\to0$, and the claim follows.

Now, let $v\in\mathcal H_{\alpha,\beta}\cap L^\infty$ have compact support, let $\varsigma\in C_c^\infty(\R^m)$ be a standard mollifier and put $v^\sigma:=v*_y\varsigma_\sigma$, the convolution acting in the $y$-variable only. Then $v^\sigma$ has compact support and

\emph{(a)} $\|v^\sigma-v\|_{L^2}\to0$;

\emph{(b)} since the weight $|x|^{2\beta}$ does not depend on $y$, it commutes with $*_y$, so $|x|^\beta\nabla_yv^\sigma=\big(|x|^\beta\nabla_yv\big)*_y\varsigma_\sigma \to|x|^\beta\nabla_yv$ in $L^2$;

\emph{(c)} writing $\tau_\zeta v(x,y):=v(x,y-\zeta)$ and using Minkowski's integral inequality,
\begin{equation}\label{eq:nonlocal_y_density}
[v^\sigma-v]_{x,\alpha}\le\int_{\R^m}\varsigma_\sigma(\zeta)\,[\tau_\zeta v-v]_{x,\alpha}\,d\zeta
\le\sup_{|\zeta|\le\sigma}[\tau_\zeta v-v]_{x,\alpha}\longrightarrow0,
\end{equation}
because $[\,\cdot\,]_{x,\alpha}$ is (half) the $L^2\big(\R^n\times\R^n\times\R^m\big)$ norm of $(x,\xi,y)\mapsto|x-\xi|^{-(n+2\alpha)/2}\big(v(x,y)-v(\xi,y)\big)$, on which $\tau_\zeta$ acts as a translation, and translations are continuous on $L^2$.

Thus $\|v^\sigma-v\|_{\mathcal H_{\alpha,\beta}}\to0$. Since $v\in L^\infty$,
\begin{equation}
\label{eq:dens-gradbdd}
\|\nabla_yv^\sigma\|_{L^\infty}=\|v*_y\nabla\varsigma_\sigma\|_{L^\infty}
\le\|\nabla\varsigma_\sigma\|_{L^1(\R^m)}\|v\|_{L^\infty}<\infty ,
\end{equation}
so that $\nabla_yv^\sigma\in L^\infty$ with compact support, hence $\nabla_yv^\sigma\in L^2(\R^{n+m})$ even without the weight.

Finally, let $g:=v^\sigma$ be as in defined above, let $\varrho\in C_c^\infty(\R^n)$ be a standard mollifier and set $g^\varepsilon:=g*_x\varrho_\varepsilon$. Then $g^\varepsilon\in C_c^\infty(\R^{n+m})$. Indeed, it is compactly supported, smooth in $x$ by convolution, smooth in $y$ because $\partial_y^\theta$ commutes with $*_x$ and $g$ is smooth in $y$, and all mixed derivatives are bounded and continuous. Moreover $\|g^\varepsilon-g\|_{L^2}\to0$, and $[g^\varepsilon-g]_{x,\alpha}\to0$ exactly as in \eqref{eq:nonlocal_y_density}, translations now acting in $x$. Finally, let $\mathcal K$ be a compact set containing the supports of all $g^\varepsilon$, $\varepsilon\le1$, and put $\Lambda:=\sup_{(x,y)\in\mathcal K}|x|^{2\beta}<\infty$. Since $\nabla_yg^\varepsilon=(\nabla_yg)*_x\varrho_\varepsilon$ and, by \eqref{eq:dens-gradbdd}, $\nabla_yg\in L^2(\R^{n+m})$, we obtain
\[
\big\||x|^\beta\nabla_y(g^\varepsilon-g)\big\|_{L^2}
\le\Lambda^{1/2}\,\big\|(\nabla_yg)*_x\varrho_\varepsilon-\nabla_yg\big\|_{L^2}
\longrightarrow0 .
\]
Hence $\|g^\varepsilon-g\|_{\mathcal H_{\alpha,\beta}}\to0$.
\end{proof}

The next proposition shows that $\rho_{\alpha,\beta}$ introduced in \eqref{eq:quasi-norm} satisfies a quasi-triangle inequality with an explicit sharp constant, and identifies exactly when $d_{\alpha,\beta}$ is a genuine metric.
\begin{proposition}[Quasi-triangle inequality]
For all $v,v'\in\R^n\times\R^m$,
\begin{equation*}
\rho_{\alpha,\beta}(v+v')\;\le\;C_{\triangle}(\gamma)\,\big(\rho_{\alpha,\beta}(v)+\rho_{\alpha,\beta}(v')\big),
\end{equation*}
where
\begin{equation}
\label{eq:C0-def}
C_{\triangle}(\gamma)\;:=\;\max\big\{1,\,2^{1/\gamma-1}\big\}
\;=\;
\begin{cases}
1, & \gamma\ge1,\\[2pt]
2^{1/\gamma-1}, & 0<\gamma<1.
\end{cases}
\end{equation}
The constant $C_{\triangle}(\gamma)$ is sharp for every $\gamma>0$. Consequently, $d_{\alpha,\beta}$ is a metric on $\R^n\times\R^m$ if and only if $\gamma=\alpha+\beta\ge1$.
\end{proposition}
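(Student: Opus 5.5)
The plan is to reduce everything to an inequality for the two-variable profile
\[
N(s,t):=\big(s^{2\gamma}+t^2\big)^{\frac1{2\gamma}},\qquad s,t\ge0,
\]
since $\rho_{\alpha,\beta}(x,y)=N(|x|,|y|)$. First I will observe that $N$ is nondecreasing in each argument. Combining this with the Euclidean triangle inequality in $\R^n$ and in $\R^m$, for $v=(x,y)$ and $v'=(x',y')$ one gets
\[
\rho_{\alpha,\beta}(v+v')\le N\big(|x|+|x'|,\,|y|+|y'|\big).
\]
It therefore suffices to prove
\[
N(s+s',t+t')\le C_\triangle(\gamma)\big(N(s,t)+N(s',t')\big)
\]
for nonnegative reals. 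Getting past the mixed $x$/$y$ structure through this monotonicity reduction is the only genuinely delicate point. After it, the two regimes of $\gamma$ are handled by placing the power $\gamma$ on the ``correct'' coordinate so that a Minkowski inequality applies.

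\emph{Case $\gamma\ge1$.} I will write $N(s,t)=\|(s,\,t^{1/\gamma})\|_{\ell^{2\gamma}}$. Since $1/\gamma\le1$, the map $t\mapsto t^{1/\gamma}$ is concave and vanishes at $0$, hence subadditive, so $(t+t')^{1/\gamma}\le t^{1/\gamma}+t'^{1/\gamma}$. Monotonicity of the $\ell^{2\gamma}$ norm in each coordinate together with Minkowski's inequality in $\ell^{2\gamma}$ (valid because $2\gamma\ge1$) then gives
\[
N(s+s',t+t')\le N(s,t)+N(s',t'),
\]
which is the claim with constant $1$.

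\emph{Case $0<\gamma<1$.} Here I will instead use $N(s,t)^\gamma=\|(s^\gamma,t)\|_{\ell^2}$. Now $s\mapsto s^\gamma$ is subadditive, so Minkowski in $\ell^2$ yields
\[
N(s+s',t+t')^\gamma\le N(s,t)^\gamma+N(s',t')^\gamma .
\]
Raising both sides to the power $1/\gamma>1$ and using the convexity bound $(A^\gamma+B^\gamma)^{1/\gamma}\le 2^{1/\gamma-1}(A+B)$ for $A,B\ge0$ gives the constant $2^{1/\gamma-1}$. That convexity bound follows from the power-mean inequality applied to $a=A^\gamma$, $b=B^\gamma$: $\big(\tfrac{a+b}{2}\big)^{1/\gamma}\le \tfrac{a^{1/\gamma}+b^{1/\gamma}}{2}$.

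\emph{Sharpness and the metric property.} For $\gamma\ge1$, taking $v'=0$ gives equality, so the constant $1$ cannot be lowered. For $\gamma<1$, I will take $v=v'=(0,y)$ with $y\neq0$. Then
\[
\rho_{\alpha,\beta}(2v)=2^{1/\gamma}|y|^{1/\gamma}=2^{1/\gamma-1}\big(\rho_{\alpha,\beta}(v)+\rho_{\alpha,\beta}(v')\big),
\]
so equality holds and the constant $2^{1/\gamma-1}$ is sharp. Since $d_{\alpha,\beta}$ is clearly symmetric, nonnegative and vanishes only on the diagonal, it is a metric exactly when the triangle inequality holds with constant $1$. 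By the above this happens iff $\gamma\ge1$. In the case $\gamma<1$ the points $0$, $(0,y)$, $(0,2y)$ give an explicit violation of the triangle inequality.
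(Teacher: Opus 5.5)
Your proof is correct. For $\gamma\ge1$, for the sharpness examples and for the metric characterization it matches the paper's argument: the monotonicity reduction, subadditivity of $t\mapsto t^{1/\gamma}$, and Minkowski in $\ell^{2\gamma}$. For $0<\gamma<1$ you take a genuinely different route.

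The paper keeps the representation $\rho_{\alpha,\beta}=\|(|x|,|y|^{1/\gamma})\|_{\ell^{2\gamma}}$ throughout, so it has to split at $\gamma=\tfrac12$:
\begin{itemize}
\item for $\tfrac12\le\gamma<1$ it applies convexity of $t\mapsto t^{1/\gamma}$ to the $y$-entry and then Minkowski in $\ell^{2\gamma}$;
\item for $0<\gamma<\tfrac12$, where $\ell^{2\gamma}$ is only a quasi-norm, it raises to the power $p=2\gamma$, uses $\ell^{1/p}\le\ell^1$ and $(\sqrt b+\sqrt{b'})^2\le2(b+b')$, and closes with convexity of $t\mapsto t^{1/p}$.
\end{itemize}

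You instead move the power onto the $x$-coordinate, writing $\rho_{\alpha,\beta}^\gamma=\|(|x|^\gamma,|y|)\|_{\ell^2}$. This gives, for every $\gamma\in(0,1)$ at once, the subadditivity $\rho_{\alpha,\beta}(v+v')^\gamma\le\rho_{\alpha,\beta}(v)^\gamma+\rho_{\alpha,\beta}(v')^\gamma$. You then convert it with the power-mean inequality, and you still land on the sharp constant $2^{1/\gamma-1}$.

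Your version is shorter and needs no case distinction. It also proves something strictly stronger: $d_{\alpha,\beta}^{\gamma}$ is a genuine metric when $\gamma<1$. Unlike the quasi-triangle inequality, this iterates over many summands without accumulating constants, $\rho_{\alpha,\beta}(\sum_i v_i)\le(\sum_i\rho_{\alpha,\beta}(v_i)^\gamma)^{1/\gamma}$, which is often more convenient in chaining arguments. The paper's route obtains the sharp constant directly at the level of $\rho_{\alpha,\beta}$, without the intermediate snowflake inequality, at the cost of the extra case and the quasi-norm bookkeeping for $\gamma<\tfrac12$.
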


\begin{proof}
Write $v=(x,y)$, $v'=(x',y')$ and $p:=2\gamma$. Then
\[
\rho_{\alpha,\beta}(x,y)=\big(|x|^{p}+\big(|y|^{1/\gamma}\big)^{p}\big)^{1/p}
=\big\|\big(|x|,|y|^{1/\gamma}\big)\big\|_{\ell^p(\R^2)},
\]
where for $0<p<1$ the right-hand side is the $\ell^p$ quasi-norm. Since this expression is nondecreasing in each of its two nonnegative entries, the triangle inequalities $|x+x'|\le|x|+|x'|$ in $\R^n$ and $|y+y'|\le|y|+|y'|$ in $\R^m$ give
\begin{equation}
\label{eq:qt-first}
\rho_{\alpha,\beta}(v+v')\;\le\;\big\|\big(|x|+|x'|,\,(|y|+|y'|)^{1/\gamma}\big)\big\|_{\ell^p(\R^2)}.
\end{equation}

Suppose first that $\gamma\ge1$. Then $t\mapsto t^{1/\gamma}$ is concave on $[0,\infty)$ and vanishes at $0$, hence subadditive, so $(|y|+|y'|)^{1/\gamma}\le|y|^{1/\gamma}+|y'|^{1/\gamma}$. Since $p\ge2$, Minkowski's inequality in $\ell^p(\R^2)$ applied to the right-hand side of~\eqref{eq:qt-first} yields $\rho_{\alpha,\beta}(v+v')\le\rho_{\alpha,\beta}(v)+\rho_{\alpha,\beta}(v')$.

Suppose next that $\tfrac12\le\gamma<1$. By convexity of $t\mapsto t^{1/\gamma}$,
\[
(s+s')^{1/\gamma}\le2^{1/\gamma-1}\big(s^{1/\gamma}+s'^{1/\gamma}\big),\qquad s,s'\ge0,
\]
and trivially $|x|+|x'|\le2^{1/\gamma-1}(|x|+|x'|)$. Inserting both bounds into~\eqref{eq:qt-first} and using the homogeneity of the norm together with Minkowski's inequality, which is available because $p\ge1$, we obtain
\[
\rho_{\alpha,\beta}(v+v')
\le2^{1/\gamma-1}\big\|\big(|x|,|y|^{1/\gamma}\big)+\big(|x'|,|y'|^{1/\gamma}\big)\big\|_{\ell^p(\R^2)}
\le2^{1/\gamma-1}\big(\rho_{\alpha,\beta}(v)+\rho_{\alpha,\beta}(v')\big).
\]

Suppose finally that $0<\gamma<\tfrac12$, so that $p<1$. Set $a:=|x|^{p}$, $a':=|x'|^{p}$, $b:=|y|^{2}$, $b':=|y'|^{2}$, so that $r:=\rho_{\alpha,\beta}(v)^{p}=a+b$ and $r':=\rho_{\alpha,\beta}(v')^{p}=a'+b'$. Raising \eqref{eq:qt-first} to the power $p$ gives
\[
\rho_{\alpha,\beta}(v+v')^{p}\le(|x|+|x'|)^{p}+(|y|+|y'|)^{2}
=\big(a^{1/p}+a'^{1/p}\big)^{p}+\big(b^{1/2}+b'^{1/2}\big)^{2}.
\]
As $1/p>1$, the $\ell^{1/p}$-norm on $\R^2$ is dominated by the $\ell^1$-norm, so $\big(a^{1/p}+a'^{1/p}\big)^{p}\le a+a'$; moreover $\big(b^{1/2}+b'^{1/2}\big)^{2}=b+b'+2\sqrt{bb'}\le2(b+b')$. Hence $\rho_{\alpha,\beta}(v+v')^{p}\le2(r+r')$. On the other hand, convexity of $t\mapsto t^{1/p}$ gives $r^{1/p}+r'^{1/p}\ge2^{1-1/p}(r+r')^{1/p}$, that is, $\big(\rho_{\alpha,\beta}(v)+\rho_{\alpha,\beta}(v')\big)^{p}\ge2^{p-1}(r+r')$. Combining the two estimates,
\[
\rho_{\alpha,\beta}(v+v')^{p}\le2^{2-p}\big(\rho_{\alpha,\beta}(v)+\rho_{\alpha,\beta}(v')\big)^{p},
\]
and taking $p$-th roots yields the claim, since $2^{(2-p)/p}=2^{1/\gamma-1}$. This proves~\eqref{eq:C0-def}.

For sharpness, the choice $v'=0$ shows that no constant smaller than $1$ is admissible. If $0<\gamma<1$, take $v=v'=(0,y)$ with $y\neq0$; then
\[
\rho_{\alpha,\beta}(v+v')=|2y|^{1/\gamma}=2^{1/\gamma-1}\cdot2|y|^{1/\gamma}
=2^{1/\gamma-1}\big(\rho_{\alpha,\beta}(v)+\rho_{\alpha,\beta}(v')\big),
\]
so equality holds and $C_{\triangle}(\gamma)$ cannot be decreased.

Finally, $d_{\alpha,\beta}(z,z')=\rho_{\alpha,\beta}(z-z')$ is symmetric, since $\rho_{\alpha,\beta}(-v)=\rho_{\alpha,\beta}(v)$, and vanishes only for $z=z'$. Writing $z-z''=(z-z')+(z'-z'')$, the case $\gamma\ge1$ above is precisely the triangle inequality for $d_{\alpha,\beta}$. If $\gamma<1$, the points $z=0$, $z'=(0,y)$, $z''=(0,2y)$ with $y\neq0$ satisfy, by the equality case just exhibited, $d_{\alpha,\beta}(z,z'')=2^{1/\gamma-1}\big(d_{\alpha,\beta}(z,z')+d_{\alpha,\beta}(z',z'')\big) >d_{\alpha,\beta}(z,z')+d_{\alpha,\beta}(z',z'')$, so the triangle inequality fails and $d_{\alpha,\beta}$ is not a metric.
\end{proof}

\begin{lemma}[Ball-cylinder equivalence]
\label{lem:ball-cylinder} For every $\gamma>0$, every $\zeta\in\mathbb{R}^n\times\mathbb{R}^m$, and every $r>0$,
\begin{equation*}
B(\zeta,r) \;\subset\; \mathcal{C}(\zeta,r) \;\subset\; B\big(\zeta,\kappa(\gamma)\, r\big),
\qquad \kappa(\gamma) := 2^{1/(2\gamma)},
\end{equation*}
with sharp constant $\kappa(\gamma)$ on the right-hand inclusion. In particular $\kappa(\gamma)$ is independent of $r$ and of the center $\zeta$.
\end{lemma}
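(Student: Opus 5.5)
By translation invariance of $d_{\alpha,\beta}$ and of the cylinders, I will take $\zeta=0$ and write $z=(x,y)$. Then $\rho_{\alpha,\beta}(z)^{2\gamma}=|x|^{2\gamma}+|y|^2$, while $z\in\mathcal C(0,r)$ means $|x|<r$ and $|y|<r^\gamma$. Both inclusions then come down to comparing these two conditions directly.

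\emph{Left inclusion.} Suppose $\rho_{\alpha,\beta}(z)<r$. Each of the two nonnegative summands is bounded by the sum, so $|x|^{2\gamma}\le\rho_{\alpha,\beta}(z)^{2\gamma}<r^{2\gamma}$, which gives $|x|<r$. In the same way $|y|^2<r^{2\gamma}$, which gives $|y|<r^\gamma$. Hence $z\in\mathcal C(0,r)$.

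\emph{Right inclusion.} Suppose $|x|<r$ and $|y|<r^\gamma$. Then
\[
\rho_{\alpha,\beta}(z)^{2\gamma}<r^{2\gamma}+r^{2\gamma}=2r^{2\gamma},
\]
and taking the $2\gamma$-th root gives $\rho_{\alpha,\beta}(z)<2^{1/(2\gamma)}r=\kappa(\gamma)r$.

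\emph{Sharpness.} Fix a unit vector $e\in\R^n$ and a unit vector $e'\in\R^m$, and for $t\in(0,1)$ set $z_t=(tre,\,t^\gamma r^\gamma e')$. Each $z_t$ lies in $\mathcal C(0,r)$, and
\[
\rho_{\alpha,\beta}(z_t)=(2t^{2\gamma}r^{2\gamma})^{1/(2\gamma)}=2^{1/(2\gamma)}\,t\,r\longrightarrow\kappa(\gamma)r \quad\text{as } t\to1^-.
\]
Now suppose $\mathcal C(0,r)\subset B(0,\kappa' r)$ for some $\kappa'<\kappa(\gamma)$. Choosing $t$ close enough to $1$ that $2^{1/(2\gamma)}t>\kappa'$ produces a point $z_t\in\mathcal C(0,r)$ outside $B(0,\kappa' r)$, a contradiction. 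So $\kappa(\gamma)$ cannot be decreased. The same computation at a general centre, using translation invariance, shows that $\kappa(\gamma)$ depends only on $\gamma$ and not on $r$ or $\zeta$.

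There is no real obstacle. The only care needed is that the cylinder is open, so the supremum $\kappa(\gamma)r$ of $\rho_{\alpha,\beta}$ over it is approached but not attained. This is exactly what makes the open ball $B(0,\kappa(\gamma)r)$ contain the cylinder, and why sharpness has to be argued through the limit $t\to1^-$.
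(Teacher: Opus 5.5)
Your proposal is correct and follows the paper's proof. It uses the same componentwise comparison for the left inclusion and the bound $\rho_{\alpha,\beta}(z-\zeta)^{2\gamma}<2r^{2\gamma}$ for the right one. For sharpness it sends $|x-\xi|$ and $|y-\eta|^{1/\gamma}$ to $r$ simultaneously, as the paper does, and your explicit family $z_t$ with $t\to1^-$ spells that limit out.
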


\begin{proof}
\emph{Left inclusion.} If $d_{\alpha,\beta}(w,\zeta)<r$ then, writing $w=(x,y)$, $|x-\xi|^{2\gamma}<r^{2\gamma}$ and $|y-\eta|^2<r^{2\gamma}$, whence $|x-\xi|<r$ and $|y-\eta|<r^{\gamma}$; thus $w\in \mathcal{C}(\zeta,r)$.

\emph{Right inclusion.} If $w\in \mathcal{C}(\zeta,r)$ then $|x-\xi|<r$ and $|y-\eta|<r^{\gamma}$, so
\begin{equation*}
d_{\alpha,\beta}(w,\zeta) = \big(|x-\xi|^{2\gamma}+|y-\eta|^2\big)^{1/(2\gamma)}
< \big(r^{2\gamma}+r^{2\gamma}\big)^{1/(2\gamma)} = 2^{1/(2\gamma)}\, r.
\end{equation*}
Sharpness of $\kappa$ follows by letting $|x-\xi|,|y-\eta|^{1/\gamma}\to r$ simultaneously.
\end{proof}

\begin{remark}
$\kappa(\gamma)=2^{1/(2\gamma)}$ is strictly decreasing in $\gamma$, with $\kappa(\gamma)\in(1,2]$ for $\gamma\ge\tfrac12$ and $\kappa(\gamma)\in(1,\sqrt2\,]$ for $\gamma\ge1$; moreover $\kappa(\gamma)\downarrow1$ as $\gamma\to\infty$ and $\kappa(\gamma)\to\infty$ as $\gamma\to0^{+}$.
\end{remark}

\section{The uniform Grushin-Poincar\'e inequality and subcritical embeddings}
\label{sec:poincare} In this section we prove Theorem~\ref{thm:uniform-poincare}, first on a cell of unit size (Lemma~\ref{lem:poincare-cell}) and then on arbitrary cylinders by anisotropic scaling, and deduce Theorem~\ref{thm:Sobolev_subcritical} from it by a chaining and covering argument.\\



\begin{lemma}[Poincar\'e inequality on a unit cell]
\label{lem:poincare-cell} Let $0<\alpha<1$, $\beta\ge0$ and let $\theta,L>0$. Let $U\subset\R^n$ be a bounded measurable set with $|U|\ge\theta$ and $\operatorname{diam}U\le L$, and let $V\subset\R^m$ be a nonempty bounded convex open set with $\operatorname{diam}V\le L$. Write $E:=U\times V$,
\[
[u]_{x,\alpha;E}^2
:=\frac{1}{2}\int_V\int_U\int_U
\frac{|u(x,y)-u(\xi,y)|^2}{|x-\xi|^{n+2\alpha}}\,dx\,d\xi\,dy,
\qquad
\mathcal G(u;E):=\int_E|x|^{2\beta}|\nabla_yu|^2\,dx\,dy .
\]
Then for every  {$u\in\mathcal H_{\alpha,\beta}(E)$, with $\mathcal H_{\alpha,\beta}$ as in \eqref{eq:H-space}},
\begin{equation}
\label{eq:poincare-cell}
\int_E|u-u_E|^2\,dx\,dy
\;\le\; C_\star\big([u]_{x,\alpha;E}^2+\mathcal G(u;E)\big),
\end{equation}
where, with $\varpi_n:=|B_1^n|$ and $\tau_{\theta}:=\min\big\{1,(\theta/2\varpi_n)^{1/n}\big\}$,
\begin{equation}
\label{eq:poincare-cell-constant}
C_\star \;=\; 3\max\left\{\frac{5\,L^{n+2\alpha}}{\theta},\;
\frac{2\,|U|\,L^{2}\,\tau_{\theta}^{-2\beta}}{\pi^2\theta}\right\}.
\end{equation}
In particular $C_\star$ depends only on $n,\alpha,\beta,\theta,L$ and $|U|$, and \emph{not} on the positions of $U$ and $V$.
\end{lemma}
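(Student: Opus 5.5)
The plan is to split $u$ into an $x$-oscillation part at each fixed $y$ and a $y$-oscillation part of $x$-averages, with the averages taken away from the degeneracy $\{x=0\}$. By the variational characterisation of the mean, $\int_E|u-u_E|^2\le\int_E|u-c|^2$ for every constant $c$. So it suffices to bound $\int_E|u-c|^2$ for a well-chosen $c$. I would use the $x$-averages of $u$ over a part of $U$ that stays away from the origin, where the weight $|x|^{2\beta}$ is bounded below.

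\emph{Step 1 (choice of the good set).} Put $U':=U\setminus B^n_{\tau_\theta}(0)$. Since $|U\cap B^n_{\tau_\theta}|\le\varpi_n\tau_\theta^n\le\theta/2$, we get $|U'|\ge\theta/2$, and $|x|\ge\tau_\theta$ on $U'$.

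Define
\[
\bar u(y):=\fint_{U'}u(\xi,y)\,d\xi,\qquad c:=\fint_V\bar u\,dy,
\]
and write $u-c=(u-\bar u(y))+(\bar u(y)-c)$. Then
\[
\int_E|u-c|^2\le 2\int_E|u-\bar u|^2+2|U|\int_V|\bar u-c|^2 .
\]

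\emph{Step 2 (the $x$-part).} For a.e.\ $y$, Jensen gives
\[
\int_U|u(x,y)-\bar u(y)|^2\,dx\le\frac1{|U'|}\int_U\int_{U'}|u(x,y)-u(\xi,y)|^2\,d\xi\,dx .
\]
Since $|x-\xi|\le L$ on $U\times U$, the right-hand side is at most
\[
\frac{2L^{n+2\alpha}}{\theta}\int_U\int_U\frac{|u(x,y)-u(\xi,y)|^2}{|x-\xi|^{n+2\alpha}}\,d\xi\,dx .
\]
Integrating in $y$ yields $\int_E|u-\bar u|^2\le(4L^{n+2\alpha}/\theta)[u]^2_{x,\alpha;E}$. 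This is compatible with the first entry of \eqref{eq:poincare-cell-constant}.

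\emph{Step 3 (the $y$-part).} On the convex set $V$ I will use the Payne--Weinberger Poincaré inequality $\int_V|f-f_V|^2\le(\operatorname{diam}V)^2\pi^{-2}\int_V|\nabla f|^2$ with $f=\bar u$. This requires $\bar u\in W^{1,1}(V)$ (in fact $W^{1,2}$) with $\nabla\bar u=\fint_{U'}\nabla_y u\,d\xi$.

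Granting this, Jensen and $|\xi|\ge\tau_\theta$ on $U'$ give
\[
\int_V|\nabla\bar u|^2\le\frac{1}{|U'|}\int_{U'\times V}|\nabla_y u|^2\le\frac{2\tau_\theta^{-2\beta}}{\theta}\,\mathcal G(u;E).
\]
Hence
\[
|U|\int_V|\bar u-c|^2\le\frac{2|U|L^2\tau_\theta^{-2\beta}}{\pi^2\theta}\,\mathcal G(u;E),
\]
which is exactly the second entry of \eqref{eq:poincare-cell-constant}. Combining Steps 2 and 3 gives \eqref{eq:poincare-cell} with a constant of the form $3\max\{\cdots\}$; it depends only on $n,\alpha,\beta,\theta,L,|U|$ and not on the positions of $U$ and $V$.

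\emph{Main obstacle.} The delicate point is justifying that $\bar u$ is weakly differentiable in $y$ with gradient equal to the $U'$-average of the slice-wise gradients. This is not automatic, because the definition of $\mathcal H_{\alpha,\beta}$ only gives $u(x,\cdot)\in W^{1,1}_{\rm loc}$ slice by slice. On $U'\times V$, however, $|x|^{-\beta}\le\tau_\theta^{-\beta}$, so $\nabla_yu\in L^2(U'\times V)\subset L^1$. Then, for $\phi\in C_c^\infty(V)$, I would integrate the slice-wise identity $\int_V u(\xi,y)\partial_j\phi\,dy=-\int_V\partial_{y_j}u(\xi,y)\phi\,dy$ over $\xi\in U'$ and apply Fubini, which is legitimate since $u,\nabla_yu\in L^1(U'\times V)$. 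This gives the claimed formula for $\nabla\bar u$, and then $\bar u\in W^{1,2}(V)$, so Payne--Weinberger applies.
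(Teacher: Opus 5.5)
Your proof is correct and follows the paper's argument. It uses the same good set $U\setminus B^n(0,\tau_\theta)$, the same competitor constant $\fint_V\bar u$, the same Payne--Weinberger step, and the same Fubini justification of $\nabla\bar u=\fint_{U'}\nabla_y u$. The only difference is that the paper passes through the intermediate full average $\fint_U u(\cdot,y)$, a three-term split that produces the factor $3(1+4)=15$ in front of $L^{n+2\alpha}/\theta$. Your single Jensen step over $U'$ instead gives the smaller constant $\max\{8L^{n+2\alpha}/\theta,\,4|U|L^2\tau_\theta^{-2\beta}/(\pi^2\theta)\}\le C_\star$. You should say this explicitly rather than ``a constant of the form $3\max\{\cdots\}$''; since this constant is at most $C_\star$, the stated inequality follows.
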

\begin{proof}
Set \( G:=U\setminus B^n(0,\tau_{\theta}). \) Since $|B^n(0,\tau_{\theta})|=\varpi_n\tau_{\theta}^n\le\theta/2$ by the choice of $\tau_{\theta}$, we have
\begin{equation}
\label{eq:good-set}
|G|\;\ge\;|U|-\varpi_n\tau_{\theta}^{n}\;\ge\;\theta-\tfrac\theta2\;=\;\tfrac\theta2,
\qquad\text{and}\qquad
|x|^{2\beta}\ge\tau_{\theta}^{2\beta}\ \text{ on }G,
\end{equation}
the second assertion because $|x|\ge\tau_{\theta}$ on $G$ and $\beta\ge0$.

For $y\in V$ put
\[
a(y):=\fint_U u(x,y)\,dx,\qquad
b(y):=\fint_G u(x,y)\,dx,\qquad
b_V:=\fint_V b(y)\,dy .
\]
Since $u_E$ is the best constant $L^2(E)$-approximation of $u$,
\[
\int_E|u-u_E|^2\,dx\,dy\le\int_E|u-b_V|^2\,dx\,dy,
\]
and decomposing $u-b_V=(u-a(y))+(a(y)-b(y))+(b(y)-b_V)$ together with $|G_1+G_2+G_3|^2\le3(|G_1|^2+|G_2|^2+|G_3|^2)$ gives
\begin{equation}
\label{eq:cell-split}
\int_E|u-u_E|^2\,dx\,dy\le3\,(I_1+I_2+I_3),
\end{equation}
with $I_1:=\int_V\int_U|u(x,y)-a(y)|^2\,dx\,dy$, $I_2:=\int_V\int_U|a(y)-b(y)|^2\,dx\,dy=|U|\int_V|a-b|^2\,dy$ and $I_3:=|U|\int_V|b(y)-b_V|^2\,dy$.

\smallskip\noindent\emph{Estimate of $I_1$.} For a measurable set $W$ of finite positive measure and $g\in L^2(W)$ one has the elementary identity
\begin{equation}
\label{eq:variance-identity}
\int_W|g-g_W|^2\,dx=\frac1{2|W|}\int_W\int_W|g(x)-g(\xi)|^2\,dx\,d\xi ,
\end{equation}
obtained by expanding the square on the right. Applying \eqref{eq:variance-identity} on $W=U$ to $g=u(\cdot,y)$ and using $|x-\xi|\le L$ for $x,\xi\in U$, so that $1\le L^{n+2\alpha}|x-\xi|^{-(n+2\alpha)}$,
\begin{align*}
\int_U|u(x,y)-a(y)|^2\,dx
&=\frac1{2|U|}\int_U\int_U|u(x,y)-u(\xi,y)|^2\,dx\,d\xi \\
& \le\frac{L^{n+2\alpha}}{2\theta}\int_U\int_U
\frac{|u(x,y)-u(\xi,y)|^2}{|x-\xi|^{n+2\alpha}}\,dx\,d\xi .
\end{align*}
Integrating in $y\in V$,
\begin{equation}
\label{eq:I1-cell}
I_1\le\frac{L^{n+2\alpha}}{\theta}\,[u]_{x,\alpha;E}^2 .
\end{equation}

\smallskip\noindent\emph{Estimate of $I_2$.} Since $a(y)-b(y)=\fint_U\fint_G\big(u(x,y)-u(\xi,y)\big)\,d\xi\,dx$, Jensen's inequality and $G\subseteq U$ give
\[
|a(y)-b(y)|^2
\le\fint_U\fint_G|u(x,y)-u(\xi,y)|^2\,d\xi\,dx
\le\frac1{|U|\,|G|}\int_U\int_U|u(x,y)-u(\xi,y)|^2\,dx\,d\xi .
\]
Using $1\le L^{n+2\alpha}|x-\xi|^{-(n+2\alpha)}$ again and then \eqref{eq:good-set},
\begin{align*}
|U|\,|a(y)-b(y)|^2
&\le\frac{L^{n+2\alpha}}{|G|}\int_U\int_U
\frac{|u(x,y)-u(\xi,y)|^2}{|x-\xi|^{n+2\alpha}}\,dx\,d\xi \\
& \le\frac{2L^{n+2\alpha}}{\theta}\int_U\int_U
\frac{|u(x,y)-u(\xi,y)|^2}{|x-\xi|^{n+2\alpha}}\,dx\,d\xi ,
\end{align*}
whence, integrating in $y$,
\begin{equation*}
I_2\le\frac{4L^{n+2\alpha}}{\theta}\,[u]_{x,\alpha;E}^2 .
\end{equation*}

\smallskip\noindent\emph{Estimate of $I_3$.}  {We first check that $b\in W^{1,2}(V)$ with}
\begin{equation}
\label{eq:b-weak-gradient}
 {\nabla_yb(y)=\fint_G\nabla_yu(x,y)\,dx
\qquad\text{for a.e.\ }y\in V .}
\end{equation}
 {On $G$ one has $|x|\ge\tau_{\theta}>0$ by \eqref{eq:good-set}, whence $|\nabla_yu|\le\tau_{\theta}^{-\beta}|x|^{\beta}|\nabla_yu|$ there; since $|x|^\beta\nabla_yu\in L^2(E)$ and $|G\times V|<\infty$, this gives $|\nabla_yu|\in L^2(G\times V)\subset L^1(G\times V)$, and likewise $u\in L^1(G\times V)$ because $u\in L^2(E)$. Hence for every $\varphi\in C_c^\infty(V)$ and every $j\in\{1,\dots,m\}$, Fubini's theorem and the slice-wise weak differentiability of $u$ give}
\[
 {\int_Vb\,\partial_{y_j}\varphi\,dy
=\fint_G\!\int_Vu\,\partial_{y_j}\varphi\,dy\,dx
=-\fint_G\!\int_V\partial_{y_j}u\,\varphi\,dy\,dx
=-\int_V\Big(\fint_G\partial_{y_j}u\,dx\Big)\varphi\,dy ,}
\]
 {which is \eqref{eq:b-weak-gradient}. By Jensen's inequality the right-hand side of \eqref{eq:b-weak-gradient} lies in $L^2(V)$, and $b\in L^2(V)$, so indeed $b\in W^{1,2}(V)$.}

The set $V$ is convex with $\operatorname{diam}V\le L$, so the Payne-Weinberger inequality (see \citep{bebendorf_poincare, payne_weinberger}) gives
\[
\int_V|b-b_V|^2\,dy\le\Big(\frac{\operatorname{diam}V}{\pi}\Big)^2
\int_V|\nabla_yb|^2\,dy\le\frac{L^2}{\pi^2}\int_V|\nabla_yb|^2\,dy,
\]
with a constant depending only on $L$ and in particular independent of the position of $V$.  {Using \eqref{eq:b-weak-gradient}, Jensen's inequality} and then \eqref{eq:good-set},
\begin{align*}
|\nabla_yb(y)|^2
&\le\fint_G|\nabla_yu(x,y)|^2\,dx
\le\frac{\tau_{\theta}^{-2\beta}}{|G|}\int_G|x|^{2\beta}|\nabla_yu(x,y)|^2\,dx\\
&\le\frac{2\tau_{\theta}^{-2\beta}}{\theta}\int_U|x|^{2\beta}|\nabla_yu(x,y)|^2\,dx .
\end{align*}

Hence
\begin{equation}
\label{eq:I3-cell}
I_3\le|U|\,\frac{L^2}{\pi^2}\,\frac{2\tau_{\theta}^{-2\beta}}{\theta}\,
\mathcal G(u;E)
=\frac{2\,|U|\,L^{2}\,\tau_{\theta}^{-2\beta}}{\pi^2\theta}\,\mathcal G(u;E).
\end{equation}
Substituting \eqref{eq:I1-cell}-\eqref{eq:I3-cell} into \eqref{eq:cell-split} yields \eqref{eq:poincare-cell} with the constant \eqref{eq:poincare-cell-constant}.
\end{proof}

\begin{proof}[Proof of Theorem \ref{thm:uniform-poincare}]
Fix $z_0$ and $r>0$ and define the anisotropic affine map
\[
T(X,Y):=\big(rX,\;r^{\gamma}Y+y_0\big),
\qquad v:=u\circ T .
\]
Then $T$ is a bijection from $U\times V$ onto $\mathcal{C}_r(z_0)$, where
\[
U:=B^n(x_0/r,\,1),\qquad V:=B^m(0,1),
\]
because $|X-x_0/r|<1\iff|rX-x_0|<r$ and $|Y|<1\iff|r^\gamma Y+y_0-y_0|<r^\gamma$. Moreover $dx\,dy=r^{Q_{\alpha,\beta}}\,dX\,dY$ with $Q_{\alpha,\beta}=n+\gamma m$ as in \eqref{eq:Q}, and $v_{U\times V}=u_{\mathcal{C}_r(z_0)}$, so
\begin{equation}
\label{eq:unif-lhs}
\int_{\mathcal{C}_r(z_0)}|u-u_{\mathcal{C}_r(z_0)}|^2\,dx\,dy
=r^{Q_{\alpha,\beta}}\int_{U\times V}|v-v_{U\times V}|^2\,dX\,dY .
\end{equation}
Writing $x=rX$, $\xi=r\Xi$, $y=r^{\gamma}Y+y_0$, we get $dx\,d\xi\,dy=r^{2n+\gamma m}\,dX\,d\Xi\,dY$, and $|x-\xi|^{-(n+2\alpha)}=r^{-(n+2\alpha)}|X-\Xi|^{-(n+2\alpha)}$, so that
\begin{equation*}
[u]^2_{x,\alpha;\mathcal{C}_r(z_0)}=r^{\,Q_{\alpha,\beta}-2\alpha}\,[v]^2_{X,\alpha;U\times V}.
\end{equation*}
Similarly $\nabla_Yv(X,Y)=r^{\gamma}\nabla_yu(T(X,Y))$ and $|x|^{2\beta}=r^{2\beta}|X|^{2\beta}$, and since $\gamma-\beta=\alpha$,
\begin{equation}
\label{eq:unif-grad}
\int_{\mathcal{C}_r(z_0)}|x|^{2\beta}|\nabla_yu|^2\,dx\,dy
=r^{\,Q_{\alpha,\beta}-2\alpha}\int_{U\times V}|X|^{2\beta}|\nabla_Yv|^2\,dX\,dY .
\end{equation}
The pair $(U,V)$ satisfies the hypotheses of Lemma~\ref{lem:poincare-cell} with
\[
\theta=|U|=\varpi_n=|B_1^n|,\qquad L=2,
\]
uniformly in $x_0/r$. Applying \eqref{eq:poincare-cell} to $v$, multiplying by $r^{Q_{\alpha,\beta}}$ and using \eqref{eq:unif-lhs}-\eqref{eq:unif-grad} gives \eqref{eq:poincare-uniform} with $C=C_\star(n,\alpha,\beta)$.
\end{proof}

We now upgrade Theorem~\ref{thm:uniform-poincare} to a Sobolev embedding at every subcritical exponent, working on anisotropic cylinders $ \mathcal{C}(z_0,r)$ and writing $\mu$ for Lebesgue measure $dx\,dy$ on $\R^{n+m}$.

We recall the following lemma, see {\cite[Lemma~20.18]{heinonen_nonlinear_potential_book}}.
\begin{lemma}
\label{lem:measure_integrability} Let $(X,\nu)$ be a finite measure space and $u:X\to\R$ measurable. Suppose there exist $q_{0}>1$ and $C_0>0$ such that $\nu(\{x\in X:|u(x)|>t\})\le C_0t^{-q_{0}}$ for all $t>0$. Then for every $0<q<q_{0}$,
\[
\int_X|u|^{q}\,d\nu
\le\frac{q_{0}}{q_{0}-q}\,C_0^{q/q_{0}}\,\nu(X)^{(q_{0}-q)/q_{0}}.
\]
\end{lemma}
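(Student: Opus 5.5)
This is the standard layer-cake estimate. We may assume $\int_X|u|^q\,d\nu$ is computed through the distribution function, which is legitimate since $\nu$ is finite and $u$ is measurable. By Cavalieri's principle, for $0<q<q_0$,
\[
\int_X|u|^{q}\,d\nu=q\int_0^\infty t^{q-1}\,\nu(\{|u|>t\})\,dt .
\]
We split the $t$-integral at a threshold $s>0$ that will be optimized at the end.

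On $(0,s)$ we use the trivial bound $\nu(\{|u|>t\})\le\nu(X)$, which gives the contribution $\nu(X)\,s^{q}$. On $(s,\infty)$ we use the hypothesis $\nu(\{|u|>t\})\le C_0t^{-q_0}$. This gives
\[
q\,C_0\int_s^\infty t^{q-1-q_0}\,dt=\frac{q}{q_0-q}\,C_0\,s^{q-q_0}.
\]
The integral converges precisely because $q<q_0$, and this is the only place where that restriction is used. Hence
\[
\int_X|u|^q\,d\nu\le \nu(X)\,s^q+\frac{q}{q_0-q}\,C_0\,s^{q-q_0}\qquad\text{for every } s>0 .
\]

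We now choose $s$ to balance the two terms, namely $s=(C_0/\nu(X))^{1/q_0}$, so that $\nu(X)s^{q_0}=C_0$. If $\nu(X)=0$ the claim is trivial, so we may assume $\nu(X)>0$. With this choice,
\[
\nu(X)s^q=C_0^{q/q_0}\,\nu(X)^{1-q/q_0}
\qquad\text{and}\qquad
C_0 s^{q-q_0}=C_0^{q/q_0}\,\nu(X)^{1-q/q_0}.
\]
The total is therefore
\[
\Bigl(1+\tfrac{q}{q_0-q}\Bigr)C_0^{q/q_0}\,\nu(X)^{(q_0-q)/q_0}=\tfrac{q_0}{q_0-q}\,C_0^{q/q_0}\,\nu(X)^{(q_0-q)/q_0},
\]
which is exactly the stated constant.

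There is no real obstacle here. The only point requiring care is choosing the splitting level so that the constant comes out as exactly $q_0/(q_0-q)$, rather than merely some comparable constant. The hypothesis $q_0>1$ is not needed for the argument itself.
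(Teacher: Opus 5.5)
Your proof is correct: the layer-cake splitting at $s=(C_0/\nu(X))^{1/q_0}$ is the optimal level and gives exactly the constant $q_0/(q_0-q)$, and you are right that $q_0>1$ is not needed. The paper gives no proof of its own and only cites Lemma~20.18 of Heinonen, Kilpel\"ainen and Martio; your argument is the standard proof of that statement.
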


Now, to prove the main Theorem~\ref{thm:Sobolev_subcritical}, we require to choose a chain of dyadic points satisfying some specific properties. To this end, fix $z_0,z$ with $z\in \mathcal{C}(z_0,r)$, and let $\rho:=d_{\alpha,\beta}(z_0,z)<\kappa(\gamma)\,r$ with $\kappa(\gamma)=2^{1/(2\gamma)}$ as in Lemma~\ref{lem:ball-cylinder}. For $M>0$, to be chosen later, define the offset point along the anisotropic dilation curve,
\begin{equation*}
z_0^{*} \;:=\; z_0 - \delta_{s}(z-z_0), \qquad s:=\frac{Mr}{\rho}, \qquad
\delta_s(x,y):=(sx,\,s^{\gamma}y),
\end{equation*}
so that, by exact homogeneity of $\delta_s$ under $\rho_{\alpha,\beta}(\cdot)=d_{\alpha,\beta}(\cdot,0)$, we have $d_{\alpha,\beta}(z_0^{*},z_0) = Mr$. Set $d:=d_{\alpha,\beta}(z_0^{*},z)$ and
\begin{equation}
\label{eq:base-cylinder}
\mathcal{C}^{(0)}:= \mathcal{C}\Big(z_0^{*},\,\frac{d}{2\kappa(\gamma)}\Big).
\end{equation}
We show that, with the proper choice of $M(\gamma)$, the base case of the dyadic chain satisfies
\begin{equation}
\label{eq:margin-goal}
 \mathcal{C}^{(0)} \;\subset\;  \mathcal{C}(z_0,r)^{c}.
\end{equation}

\begin{proposition}
\label{prop:M-offset} Let $\gamma=\alpha+\beta>0$, let $\kappa=\kappa(\gamma)=2^{1/(2\gamma)}$ be the constant of Lemma~\ref{lem:ball-cylinder}, and set
\begin{equation*}
\Theta(M):=\max\Big\{1+\frac{\kappa}{M},\ \big(1+\sqrt2\,M^{-\gamma}\big)^{1/\gamma}\Big\}.
\end{equation*}
Suppose that $M>0$ satisfies
\begin{equation}
\label{eq:M-offset-conditions}
\Theta(M)\le\frac32,
\qquad
M\ \ge\ 4\kappa,
\qquad
M\ \ge\ \Big(\frac{\sqrt2}{1-(3/4)^{\gamma}}\Big)^{1/\gamma}.
\end{equation}
Then \eqref{eq:margin-goal} holds, and moreover $Mr\le d<\tfrac32\,Mr$. In particular, the explicit constant
\begin{equation}
\label{eq:M-offset}
M(\gamma):=\max\left\{
2^{\,2+\frac1{2\gamma}},\ \
\Big(\frac{\sqrt2}{(3/2)^{\gamma}-1}\Big)^{1/\gamma},\ \
\Big(\frac{\sqrt2}{1-(3/4)^{\gamma}}\Big)^{1/\gamma}
\right\}
\end{equation}
is admissible for \emph{every} $\gamma>0$.
\end{proposition}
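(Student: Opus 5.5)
The plan is to work in coordinates and use the exact homogeneity $\rho_{\alpha,\beta}(\delta_s w)=s\,\rho_{\alpha,\beta}(w)$. Write $z-z_0=(a,b)$, so that $|a|<r$ and $|b|<r^\gamma$ because $z\in\mathcal C(z_0,r)$. Also $\rho=(|a|^{2\gamma}+|b|^2)^{1/(2\gamma)}<\kappa r$ by Lemma~\ref{lem:ball-cylinder}. By definition $z_0^*-z_0=-(sa,s^\gamma b)$, hence
\[
z-z_0^*=\big((1+s)a,\,(1+s^\gamma)b\big),\qquad d^{2\gamma}=(1+s)^{2\gamma}|a|^{2\gamma}+(1+s^\gamma)^2|b|^2 .
\]
This explicit formula reduces every claim to elementary scalar inequalities. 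In particular, no quasi-triangle inequality is needed; that is deliberate, since the constant $C_\triangle(\gamma)$ degenerates for small $\gamma$.

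\emph{Step 1: bounds on $d$.} For the lower bound, $(1+s)^{2\gamma}\ge s^{2\gamma}$ and $(1+s^\gamma)^2\ge s^{2\gamma}$ give $d\ge s\rho=Mr$. For the upper bound, factor out $s$ in the $x$-part and $s^\gamma$ in the $y$-part. Using $1/s=\rho/(Mr)<\kappa/M$ and $\kappa^\gamma=\sqrt2$ we get $(1+s)/s<1+\kappa/M$ and $(1+s^\gamma)/s^\gamma=1+s^{-\gamma}<1+\sqrt2\,M^{-\gamma}$. Consequently $d^{2\gamma}<\Theta(M)^{2\gamma}(s\rho)^{2\gamma}$, that is, $d<\Theta(M)\,Mr\le\tfrac32 Mr$.

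\emph{Step 2: the inclusion \eqref{eq:margin-goal}.} Let $w=(x,y)\in\mathcal C^{(0)}$, so that $|x-x_0^*|<d/(2\kappa)<3Mr/(4\kappa)$ and $|y-y_0^*|<(d/2\kappa)^\gamma<(3Mr/4)^\gamma/\sqrt2$. Since $(s|a|)^{2\gamma}+(s^\gamma|b|)^2=(Mr)^{2\gamma}$, at least one of the two summands is at least $\tfrac12(Mr)^{2\gamma}$. This gives two cases.

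\emph{Case (i): $s|a|\ge Mr/\kappa$.} The reverse triangle inequality in $\R^n$ gives
\[
|x-x_0|\ge s|a|-|x-x_0^*|>\frac{Mr}{\kappa}-\frac{3Mr}{4\kappa}=\frac{Mr}{4\kappa}\ge r,
\]
where the last inequality uses $M\ge4\kappa$.

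\emph{Case (ii): $s^\gamma|b|\ge (Mr)^\gamma/\sqrt2$.} The reverse triangle inequality in $\R^m$ gives
\[
|y-y_0|>\frac{(Mr)^\gamma}{\sqrt2}\big(1-(3/4)^\gamma\big)\ge r^\gamma,
\]
where the last inequality uses the third condition in \eqref{eq:M-offset-conditions}.

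In either case $w\notin\mathcal C(z_0,r)$, which proves \eqref{eq:margin-goal}.

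\emph{Step 3: admissibility of \eqref{eq:M-offset}.} The first entry of $M(\gamma)$ is $2^{2+1/(2\gamma)}=4\kappa$, so $M\ge4\kappa$ and $1+\kappa/M\le\tfrac54$. The condition $(1+\sqrt2M^{-\gamma})^{1/\gamma}\le\tfrac32$ is equivalent to $M^\gamma\ge\sqrt2/((3/2)^\gamma-1)$, which is exactly the second entry. The third entry is the third condition verbatim. Hence every condition in \eqref{eq:M-offset-conditions} holds for all $\gamma>0$.

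The only delicate point is Step 2. The ball inclusion $\mathcal C(z_0^*,d/2\kappa)\subset B(z_0^*,d/2)$ combined with a quasi-triangle inequality would lose the factor $C_\triangle(\gamma)$, which blows up as $\gamma\to0$. The case split on which coordinate carries the mass of $\rho_{\alpha,\beta}(z_0^*-z_0)$ avoids this loss and produces exactly the thresholds $4\kappa$ and $(\sqrt2/(1-(3/4)^\gamma))^{1/\gamma}$.
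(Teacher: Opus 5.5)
Your proposal is correct and follows the paper's proof essentially step for step. It uses the same explicit formula for $d^{2\gamma}$ to get $Mr\le d<\Theta(M)\,Mr$, and the same dichotomy: your $s|a|\ge Mr/\kappa$ or $s^{\gamma}|b|\ge (Mr)^{\gamma}/\sqrt2$ is exactly the paper's $A\ge\kappa^{-1}$ or $B\ge\kappa^{-\gamma}$ after normalizing by $Mr$. The verification of the three entries of $M(\gamma)$ is also the same, and, as in the paper, you tacitly need $z\ne z_0$ so that $s=Mr/\rho$ is defined.
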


\begin{proof}
Write $z-z_0=(\Delta x,\Delta y)$, $a:=|\Delta x|$, $b:=|\Delta y|$, and recall $ \mathcal{C}(\zeta,t)=\{|x-\xi|<t,\ |y-\eta|<t^{\gamma}\}$, so that $z\in \mathcal{C}(z_0,r)$ means $a<r$ and $b<r^{\gamma}$; we may assume $z\ne z_0$, i.e.\ $\rho:=\rho_{\alpha,\beta}(\Delta x,\Delta y)>0$. Set
\begin{equation}
\label{eq:AB-def}
A:=\frac a\rho\in[0,1],\qquad B:=\frac b{\rho^{\gamma}}\in[0,1],
\qquad\text{so that}\qquad A^{2\gamma}+B^{2}=\frac{a^{2\gamma}+b^{2}}{\rho^{2\gamma}}=1 .
\end{equation}
From $a<r$ and $b<r^{\gamma}$ we also get $\rho^{2\gamma}=a^{2\gamma}+b^{2}<2r^{2\gamma}$, hence
\begin{equation}
\label{eq:rho-bounds}
\rho<\kappa r\qquad\text{and}\qquad\rho^{\gamma}<\kappa^{\gamma}r^{\gamma}=\sqrt2\,r^{\gamma}.
\end{equation}

We first record the exact offset between $z_0$ and $z_0^{*}$. By definition $s\rho=Mr$, hence also $s^{\gamma}\rho^{\gamma}=(Mr)^{\gamma}$; since $z_0^{*}-z_0=-\delta_s(z-z_0)=(-s\Delta x,-s^{\gamma}\Delta y)$,
\begin{equation}
\label{eq:offset-coords}
|\xi_0^{*}-\xi_0|=s\,a=s\rho A=Mr\,A,
\qquad
|\eta_0^{*}-\eta_0|=s^{\gamma}b=s^{\gamma}\rho^{\gamma}B=(Mr)^{\gamma}B .
\end{equation}
Likewise $z_0^{*}-z=-\big((1+s)\Delta x,\ (1+s^{\gamma})\Delta y\big)$, so that, using $(1+s)a=(\rho+Mr)A$ and $(1+s^{\gamma})b=\big(\rho^{\gamma}+(Mr)^{\gamma}\big)B$,
\begin{equation*}
d^{2\gamma}
=(1+s)^{2\gamma}a^{2\gamma}+(1+s^{\gamma})^{2}b^{2}
=A^{2\gamma}\big(\rho+Mr\big)^{2\gamma}+B^{2}\big(\rho^{\gamma}+(Mr)^{\gamma}\big)^{2}.
\end{equation*}
By \eqref{eq:AB-def} this gives at once the lower bound $d\ge Mr\,(A^{2\gamma}+B^{2})^{1/(2\gamma)}=Mr$. In the other direction, \eqref{eq:rho-bounds} yields
\begin{equation*}
\big(\rho+Mr\big)^{2\gamma}<(Mr)^{2\gamma}\Big(1+\frac{\kappa}{M}\Big)^{2\gamma}
\le(Mr)^{2\gamma}\Theta(M)^{2\gamma},
\qquad
\big(\rho^{\gamma}+(Mr)^{\gamma}\big)^{2}<(Mr)^{2\gamma}\Theta(M)^{2\gamma},
\end{equation*}
whence, again by $A^{2\gamma}+B^{2}=1$ and by the first condition in \eqref{eq:M-offset-conditions},
\begin{equation}
\label{eq:d-upper}
Mr\ \le\ d\ <\ \Theta(M)\,Mr\ \le\ \tfrac32\,Mr .
\end{equation}

Now let $w=(X,Y)\in \mathcal{C}^{(0)}$, so that $|X-\xi_0^{*}|<d/(2\kappa)$ and $|Y-\eta_0^{*}|<\big(d/(2\kappa)\big)^{\gamma}$. Combining with \eqref{eq:offset-coords} and the Euclidean triangle inequality in each block of variables separately,
\begin{equation*}
|X-\xi_0|>Mr\,A-\frac{d}{2\kappa}=:\Phi_x,
\qquad
|Y-\eta_0|>(Mr)^{\gamma}B-\Big(\frac{d}{2\kappa}\Big)^{\gamma}=:\Phi_y ;
\end{equation*}
by the definition of $ \mathcal{C}(z_0,r)$, either $\Phi_x\ge r$ or $\Phi_y\ge r^{\gamma}$ already forces $w\notin \mathcal{C}(z_0,r)$. Since $\kappa^{-2\gamma}=\tfrac12$ and $\kappa^{-\gamma}=2^{-1/2}$, the constraint $A^{2\gamma}+B^{2}=1$ shows that the inequalities $A<\kappa^{-1}$ and $B<\kappa^{-\gamma}$ cannot both hold, for they would give $A^{2\gamma}+B^{2}<\tfrac12+\tfrac12=1$. We may therefore distinguish two regimes.

\emph{Case 1: $A\ge\kappa^{-1}$.} By \eqref{eq:d-upper},
\begin{equation*}
\Phi_x\ >\ \frac{Mr}{\kappa}-\frac{3Mr}{4\kappa}\ =\ \frac{Mr}{4\kappa}\ \ge\ r,
\end{equation*}
the last inequality being the second condition in \eqref{eq:M-offset-conditions}.

\emph{Case 2: $B\ge\kappa^{-\gamma}=2^{-1/2}$.} Since $t\mapsto t^{\gamma}$ is increasing, \eqref{eq:d-upper} gives
\begin{equation*}
\Phi_y\ >\ \frac{(Mr)^{\gamma}}{\sqrt2}-\Big(\frac{3Mr}{4\kappa}\Big)^{\gamma}
=\frac{(Mr)^{\gamma}}{\sqrt2}\Big(1-(3/4)^{\gamma}\Big)\ \ge\ r^{\gamma},
\end{equation*}
the last inequality being the third condition in \eqref{eq:M-offset-conditions}. In either case $w\notin \mathcal{C}(z_0,r)$, which proves \eqref{eq:margin-goal}.

It remains to check that \eqref{eq:M-offset} meets \eqref{eq:M-offset-conditions}. The requirement $\Theta(M)\le\tfrac32$ amounts to the two conditions $M\ge2\kappa$ and $M\ge\big(\sqrt2/((3/2)^{\gamma}-1)\big)^{1/\gamma}$, the first of which is implied by $M\ge4\kappa=2^{2+\frac1{2\gamma}}$; hence \eqref{eq:M-offset} is exactly the maximum of the three surviving constraints. For every $\gamma>0$ one has $(3/2)^{\gamma}>1$ and $(3/4)^{\gamma}<1$ strictly, so all three are finite.
\end{proof}

\begin{proof}[Proof of Theorem \ref{thm:Sobolev_subcritical}]
The idea is to join each point of the level set $\{|u|>t\}$ to a cylinder outside the support of $u$, where the average of $u$ vanishes, by a dyadic chain of cylinders. The Poincar\'e inequality along the chain then gives a cylinder around the point on which the energy is large, and the level set is covered by such cylinders with the Vitali lemma.

Throughout we regard $u$ as defined on all of $\R^{n+m}$, extended by zero outside $\mathcal{C}(z_0,r)$; the extension is Lipschitz on $\R^{n+m}$ because $\operatorname{supp}u$ is a compact subset of the open cylinder.

Fix $\varepsilon\in(0,\alpha)$. We first establish the weak-type estimate
\begin{equation}
\label{eq:weaktype}
\big|\{z\in \mathcal{C}(z_0,r):| u(z)|>t\}\big|
\le C(\gamma,\varepsilon)\left(r^{2\varepsilon} \int_{ \mathcal{C}(z_0,r)}|\nabla_{\mathcal L} u|^2\,dz\right)^{\frac{Q}{Q - 2\alpha + 2\varepsilon}}t^{-\frac{2Q}{Q-2\alpha + 2\varepsilon}},
\qquad t>0.
\end{equation}
Fix $t>0$ and $z\in \mathcal{C}(z_0,r)$ be a Lebesgue point with $| u(z)|>t$, and set
\begin{equation*}
z_0^{*} \;:=\; z_0 - \delta_{s}(z-z_0), \qquad s:=\frac{M(\gamma)r}{\rho},
\end{equation*}
with $M(\gamma)$ as in Proposition~\ref{prop:M-offset}; then $\mathcal{C}^{(0)}\subset \mathcal{C}(z_0,r)^{c}$ by \eqref{eq:margin-goal}. For each $k\in\mathbb{N}\cup\{0\}$, define the dyadic chain points lying between $z_0^{*}$ and $z$ as
\begin{equation*}
\zeta_k \;:=\; z + \delta_{2^{-k}}\big(z_0^{*}-z\big), \qquad k\ge0,
\end{equation*}
so that $d_{\alpha,\beta}(\zeta_k,z)=2^{-k}d$ with $d:=d_{\alpha,\beta}(z_0^{*},z)$, and set
\begin{equation}
\label{eq:chain-radii}
r_k:=\frac{2^{-k}d}{2\kappa(\gamma)},
\qquad
\mathcal{C}^{(k)}:=\mathcal{C}\big(\zeta_k,r_k\big),\qquad k\ge0,
\end{equation}
which for $k=0$ is consistent with \eqref{eq:base-cylinder}. Next we claim that for every $k\ge1$,
\begin{equation}\label{eq:dyadic_cylinder_containment}
\mathcal{C}^{(k)} \;\subset\; \lambda(\gamma)\, \mathcal{C}^{(k-1)},
\end{equation}
where
\begin{equation*}
\lambda(\gamma) \;:=\; \frac{C_{\triangle}(\gamma)\,\kappa(\gamma)\big(1+2\vartheta(\gamma)\big)}{2},
\qquad
\vartheta(\gamma):=\max\big\{1,(2^{\gamma}-1)^{1/\gamma}\big\},
\end{equation*}
and $\lambda(\gamma)\mathcal{C}^{(k-1)}:= \mathcal{C}\big(\zeta_{k-1},\lambda(\gamma)\,r_{k-1}\big)$.

To this end, write $v:=z_0^{*}-z=(\Delta x,\Delta y)$ and $p:=2^{-k}$. Since $\zeta_k=z+\delta_p(v)$ and $\zeta_{k-1}=z+\delta_{2p}(v)$,
\begin{equation*}
\zeta_k-\zeta_{k-1} \;=\; \delta_p(v)-\delta_{2p}(v)
\;=\; \big(-p\Delta x,\ -p^{\gamma}(2^{\gamma}-1)\Delta y\big)
\;=\; -\delta_p\big(\Delta x,\ (2^{\gamma}-1)\Delta y\big),
\end{equation*}
so that, by exact homogeneity of $d_{\alpha,\beta}(\cdot,0)$ under $\delta_p$,
\begin{equation*}
d_{\alpha,\beta}(\zeta_k,\zeta_{k-1}) \;=\; p\, d_{\alpha,\beta}\big((\Delta x,(2^{\gamma}-1)\Delta y),\,0\big).
\end{equation*}
Since
\begin{equation*}
d_{\alpha,\beta}\big((\Delta x,(2^{\gamma}-1)\Delta y),0\big)^{2\gamma}
= |\Delta x|^{2\gamma}+(2^{\gamma}-1)^2|\Delta y|^2 \le
\vartheta(\gamma)^{2\gamma}\big(|\Delta x|^{2\gamma}+|\Delta y|^2\big) = \vartheta(\gamma)^{2\gamma}\,d^{2\gamma},
\end{equation*}
the definition \eqref{eq:chain-radii} of $r_k$ yields
\begin{equation*}
d_{\alpha,\beta}(\zeta_k,\zeta_{k-1}) \;\le\; 2^{-k}\vartheta(\gamma)\,d
\;=\; 2\,\kappa(\gamma)\,\vartheta(\gamma)\,r_k.
\end{equation*}
Consequently, for $w\in \mathcal{C}^{(k)}$ we have $d_{\alpha,\beta}(w,\zeta_k)<\kappa(\gamma)r_k$ by Lemma~\ref{lem:ball-cylinder}, whence
\begin{equation*}
d_{\alpha,\beta}(w,\zeta_{k-1}) \le C_{\triangle}(\gamma)\Big(d_{\alpha,\beta}(w,\zeta_k) +
d_{\alpha,\beta}(\zeta_k,\zeta_{k-1})\Big)
< C_{\triangle}(\gamma)\,\kappa(\gamma)\big(1+2\vartheta(\gamma)\big)\,r_k
= \lambda(\gamma)\,r_{k-1},
\end{equation*}
using $r_k=r_{k-1}/2$ in the last step. Thus, by the left inclusion in Lemma~\ref{lem:ball-cylinder}, $w\in\mathcal{C}\big(\zeta_{k-1},\lambda(\gamma)r_{k-1}\big)$, which proves the claim.

Since $ u$ is continuous and $r_k\to0$ with $\zeta_k\to z$, we have $ u_{\mathcal{C}^{(k)}}\to u(z)$, while $ u_{\mathcal{C}^{(0)}}=0$ because $ u$ is supported in $\mathcal{C}(z_0,r)$ and $\mathcal{C}^{(0)}\subset\mathcal{C}(z_0,r)^c$. Hence the triangle inequality together with the Poincar\'e inequality~\eqref{eq:poincare-uniform}, applied along the chain $\{\mathcal{C}^{(k)}\}$ and using \eqref{eq:dyadic_cylinder_containment}, gives
\begin{equation}
\label{eq:telescoping}
\begin{aligned}
t&<| u(z)|=| u(z)- u_{\mathcal{C}^{(0)}}|
\le \sum_{k=1}^\infty |  u_{\mathcal{C}^{(k)}} -  u_{\mathcal{C}^{(k-1)}}|\\
&\le  \sum_{k=1}^\infty \left(|  u_{\mathcal{C}^{(k)}} -  u_{\lambda(\gamma)\mathcal{C}^{(k-1)}}| + |  u_{\lambda(\gamma)\mathcal{C}^{(k-1)}} -  u_{\mathcal{C}^{(k-1)}}|\right) \\
&\le  \sum_{k=1}^\infty \left( \fint_{\mathcal{C}^{(k)}} |  u(z) -  u_{\lambda(\gamma)\mathcal{C}^{(k-1)}}|dz + \fint_{\mathcal{C}^{(k-1)}}|  u(z) -  u_{\lambda(\gamma)\mathcal{C}^{(k-1)}}| dz\right) \\
&\le C(\gamma)\sum_{k=0}^\infty d^{\alpha}\,2^{-k \alpha}
\Big(\fint_{\lambda(\gamma) \mathcal{C}^{(k)}}|\nabla_{\mathcal L} u|^2\,dz\Big)^{1/2},
\end{aligned}
\end{equation}
the factor $\kappa(\gamma)^{-\alpha}$ produced by \eqref{eq:chain-radii} having been absorbed into $C(\gamma)$. Writing $t=C(\varepsilon)\sum_{k\ge0}t\,2^{-k\varepsilon}$ and comparing with \eqref{eq:telescoping}, there is an index $k_z\ge0$ such that
\begin{equation}
\label{eq:critical}
t\,2^{-k_z\varepsilon}\le C(\gamma,\varepsilon)d^{\alpha}\,2^{-k_z \alpha}
\Big(\fint_{\lambda(\gamma) \mathcal{C}^{(k_z)}}|\nabla_{\mathcal L} u|^2\,dz\Big)^{1/2}.
\end{equation}

\noindent Using \eqref{eq:d-upper} in \eqref{eq:critical}, we get
\begin{align*}
t&\le C(\gamma,\varepsilon)d^\varepsilon(d2^{-k_z})^{\alpha-\varepsilon}\mu(\lambda (\gamma) \mathcal{C}^{(k_z)})^{-1/2}\Big(\int_{\lambda (\gamma) \mathcal{C}^{(k_z)}}|\nabla_{\mathcal
L} u|^2\,dz\Big)^{1/2} \\
&\le C(\gamma,\varepsilon)r^\varepsilon \mu(\lambda (\gamma) \mathcal{C}^{(k_z)})^{\frac{\alpha -\varepsilon}{Q}-\frac12} \Big(\int_{\lambda (\gamma) \mathcal{C}^{(k_z)}}|\nabla_{\mathcal L} u|^2\,dz\Big)^{1/2},
\end{align*}
and rearranging,
\begin{equation}
\label{eq:balls-covering}
\mu(\lambda (\gamma) \mathcal{C}^{(k_z)})
\le C(\gamma, \varepsilon) \left(r^{2 \varepsilon}
\int_{\lambda (\gamma) \mathcal{C}^{(k_z)}}|\nabla_{\mathcal L} u|^2\,dz\right)^{\frac{Q}{Q - 2\alpha + 2\varepsilon}}t^{-\frac{2Q}{Q-2\alpha + 2\varepsilon}}.
\end{equation}
Since $d_{\alpha,\beta}(\zeta_{k_z},z)=2^{-k_z}d=2\kappa(\gamma)\,r_{k_z}$ exactly, we have $$z\in \overline{B}\big(\zeta_{k_z},2\kappa(\gamma)r_{k_z}\big)\subset \mathcal{C}\big(\zeta_{k_z},3\kappa(\gamma)r_{k_z}\big)=3\kappa(\gamma)\mathcal{C}^{(k_z)}$$, uniformly in $\gamma$, for every $z$ in the level set $\{| u|>t\}$. Moreover $C_{\triangle}(\gamma)\ge1$ and $\vartheta(\gamma)\ge1$ give $\lambda(\gamma)\ge\tfrac32\kappa(\gamma)$, so that $3\kappa(\gamma)\le2\lambda(\gamma)$ and therefore $z\in 2\lambda(\gamma)\mathcal{C}^{(k_z)}$. Now the Vitali covering theorem developed in \cite[pp.~9-10]{stein_singular_book} can be adapted to the Grushin cylinders $\mathcal{C}^{(j)}$ by noting the fact that if $\mathcal{C}^{(j)} \cap \mathcal{C}^{(j_0)} \neq \emptyset$ and $\operatorname{rad}(\mathcal{C}^{(j)})/2 \leq \operatorname{rad}(\mathcal{C}^{(j_0)})$ then
\begin{equation*}
 \mathcal{C}^{(j)} \subset \max\big\{ 5, (2^{\gamma +1}+1)^{1/\gamma}\big\} \mathcal{C}^{(j_0)}.
\end{equation*}
Thus applying the Vitali covering theorem on $\mathcal{C}(z_0,r)$ yields a countable, pairwise disjoint subcollection $\{2\lambda(\gamma)\mathcal{C}^{(i)}\}_{i\ge1}$ of Grushin cylinders of the type $2\lambda(\gamma) \mathcal{C}^{(k_z)}$ with
\begin{equation*}
\{z\in \mathcal{C}(z_0,r):| u(z)|>t\}\subset\bigcup_i 2 \max\big\{ 5, (2^{\gamma +1}+1)^{1/\gamma}\big\} \lambda(\gamma) \mathcal{C}^{(i)}.
\end{equation*}
Hence, by \eqref{eq:balls-covering} we obtain
\begin{align*}
\mu\big(\{z\in \mathcal{C}(z_0,r):| u|>t\}\big)
&\le\sum_i\mu\big(2 \max\big\{ 5, (2^{\gamma +1}+1)^{1/\gamma}\big\}\lambda(\gamma)\mathcal{C}^{(i)}\big)\le C(\gamma)\sum_i\mu(\lambda(\gamma)\mathcal{C}^{(i)})\\
&\le C(\gamma,\varepsilon) \sum_i \left(r^{2 \varepsilon}
\int_{\lambda(\gamma)\mathcal{C}^{(i)}}|\nabla_{\mathcal L} u|^2\,dz\right)^{\frac{Q}{Q - 2\alpha + 2\varepsilon}}t^{-\frac{2Q}{Q-2\alpha + 2\varepsilon}}\\
&\le C(\gamma,\varepsilon)\left(r^{2\varepsilon} \int_{ \mathcal{C}(z_0,r)}|\nabla_{\mathcal L} u|^2\,dz\right)^{\frac{Q}{Q - 2\alpha + 2\varepsilon}}t^{-\frac{2Q}{Q-2\alpha + 2\varepsilon}},
\end{align*}
where the last step uses the disjointness of the $\lambda(\gamma)\mathcal{C}^{(i)}$'s and  {Corollary~\ref{cor:support-comparison}}. This proves \eqref{eq:weaktype}. Applying Lemma~\ref{lem:measure_integrability} with $X= \mathcal{C}(z_0,r)$ and $q_{0}=2Q/(Q-2\alpha+2\varepsilon)$ we obtain \eqref{eq:subcritical-sobolev}.
\end{proof}

\section{Critical Grushin-Sobolev inequality and local compactness}
\label{sec:riesz}

In this section we prove Theorems~\ref{thm:global} and~\ref{lem:rellich}. Theorem~\ref{thm:Sobolev_subcritical} cannot be used for this, since $2^*_{\alpha,\beta,\varepsilon}<2^*_{\alpha,\beta}$ for every $\varepsilon>0$. Instead we bound $|u-u_{\mathcal C_r}|$ by a Riesz-type potential, prove a weak-type estimate for it, and pass to the strong estimate by truncation. Finally we conclude the section with a Rellich-type local compactness result.

We first state the following  Theorem~\ref{thm:maximal_fn}, namely the standard Hardy-Littlewood maximal theorem on doubling metric measure spaces (see \cite[Ch.~I]{stein_singular_book}). It remains valid in our setting, since the Vitali  covering theorem can be established for the Grushin cylinders $C_r(z)$.
\begin{theorem}[Maximal theorem]
\label{thm:maximal_fn} Let $(X,\mu)$ be a quasi-metric measure space with doubling constant $C_d$, let $\Omega\subseteq X$, and for $v\in L^1_{\mathrm{loc}}(\Omega,\mu)$ define
\[
\mathcal M_\Omega v(x):=\sup_{r>0}\frac1{\mu(B(x,r))}\int_{\Omega\cap
B(x,r)}|v|\,d\mu.
\]
Then, for every $t>0$, $\mu(\{x\in\Omega:\mathcal M_\Omega v(x)>t\})\le\frac Ct\int_\Omega|v|\,d\mu$, with $C=C(C_d)$; and for every $1<p\le\infty$, $\|\mathcal M_\Omega v\|_{L^p(\Omega)}\le C\|v\|_{L^p(\Omega)}$, with $C=C(C_d,p)$.
\end{theorem}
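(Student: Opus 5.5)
The plan is the classical two-step argument: a Vitali-type covering gives the weak $(1,1)$ bound, and Marcinkiewicz interpolation against the trivial $L^\infty$ bound gives the strong $L^p$ bounds. Throughout I use only that $(X,\mu)$ is a quasi-metric space with quasi-triangle constant $K\ge1$, $d(x,z)\le K\big(d(x,w)+d(w,z)\big)$, and that $\mu$ is doubling, $\mu(B(x,2r))\le C_d\,\mu(B(x,r))$, with balls of finite positive measure. Extend $v$ by zero outside $\Omega$, so that $\mathcal M_\Omega v$ coincides on $\Omega$ with the uncentred-in-spirit but centred maximal function of $|v|\chi_\Omega$. Hence it suffices to treat $\Omega=X$ for $f:=|v|\chi_\Omega\in L^1_{\mathrm{loc}}(X)$, and the constants obtained cannot depend on $\Omega$.

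\emph{Covering lemma.} First I establish a Vitali lemma: from any family $\mathcal F$ of balls with $\sup\operatorname{rad}<\infty$ one can extract a pairwise disjoint subfamily $\{B_i\}$ such that every $B\in\mathcal F$ meets some $B_i$ with $\operatorname{rad}(B)\le2\operatorname{rad}(B_i)$. The selection is greedy by generations: split $\mathcal F$ into the classes with radii in $(2^{-j-1}R,2^{-j}R]$ and, by Zorn's lemma, choose maximal disjoint subfamilies generation by generation. If $B=B(x,r)$ meets $B_i=B(x_i,r_i)$ with $r\le2r_i$, two applications of the quasi-triangle inequality give $B\subset B(x_i,\Lambda r_i)$ with $\Lambda:=K(1+2K)\cdot$, up to a harmless factor, a constant depending only on $K$. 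Doubling then gives $\mu(\Lambda B_i)\le C_d^{\lceil\log_2\Lambda\rceil}\mu(B_i)$. This is the step I expect to be the main obstacle. In the quasi-metric setting $\Lambda$ depends on $K$, and one must also ensure that the selected subfamily is countable, or at least that the sum of measures is controlled. Countability follows if $X$ is separable or $\sigma$-finite, since disjoint balls of positive measure inside a ball of finite measure are countable. I would state this as the standing assumption implicit in ``quasi-metric measure space''. In the application of the paper these conditions are immediate via Lemma~\ref{lem:ball-cylinder}.

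\emph{Weak type.} Fix $t>0$ and a bounded set $F\subset E_t:=\{\mathcal M f>t\}$; the result for $E_t$ follows by exhaustion. For each $x\in F$ choose $r_x$ with $\int_{B(x,r_x)}f>t\,\mu(B(x,r_x))$. Two cases arise. If $\sup_x r_x=\infty$, then balls of arbitrarily large measure satisfy $\mu(B)<t^{-1}\|f\|_{L^1}$, and in particular $\mu(X)\le t^{-1}\|f\|_1$, so the bound is trivial. Otherwise apply the covering lemma to obtain disjoint balls $B_i$ with $F\subset\bigcup\Lambda B_i$. Then
\[
\mu(F)\le\sum_i\mu(\Lambda B_i)\le C\sum_i\mu(B_i)\le\frac Ct\sum_i\int_{B_i}f\le\frac Ct\int_\Omega|v|\,d\mu ,
\]
with $C=C(C_d,K)$. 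The measurability of $E_t$ follows because $\mathcal M f$ is lower semicontinuous: if $\int_{B(x,r)}f>t\mu(B(x,r))$, one perturbs $r$ slightly using continuity of measure along increasing balls.

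\emph{Strong type.} For $p=\infty$ the bound $\mathcal M f\le\|f\|_\infty$ is immediate. For $1<p<\infty$ write $f=f\chi_{\{f>t/2\}}+f\chi_{\{f\le t/2\}}$. Since $\mathcal M(f\chi_{\{f\le t/2\}})\le t/2$, we get $\{\mathcal M f>t\}\subset\{\mathcal M(f\chi_{\{f>t/2\}})>t/2\}$, and the weak bound gives $\mu(\{\mathcal M f>t\})\le\frac{2C}{t}\int_{\{f>t/2\}}f$. Integrating $p\,t^{p-1}$ against this and applying Tonelli yields
\[
\|\mathcal M f\|_p^p\le\frac{2^pCp}{p-1}\|f\|_p^p ,
\]
which is the claim with $C=C(C_d,p)$, once $K$ is regarded as part of the structural data.
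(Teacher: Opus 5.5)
Your proposal is the classical Vitali covering $\Rightarrow$ weak $(1,1)$ $\Rightarrow$ Marcinkiewicz argument. This is exactly what the paper relies on: it gives no proof beyond citing Stein and noting that the Vitali lemma holds for Grushin cylinders. Your tracking of the quasi-triangle constant $K$ in the enlargement factor is correct.

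The only step that does not carry over verbatim is the lower semicontinuity of $\mathcal M f$. Perturbing the centre requires inclusions $B(x,s)\subset B(y,r')\subset B(x,r)$ for $s<r'<r$ and $y$ near $x$, and the quasi-triangle inequality alone does not give these when $K>1$ (for instance for $d_{\alpha,\beta}$ with $\alpha+\beta<1$). In the paper's translation-invariant setting lower semicontinuity still holds, because $y\mapsto\int_{\mathcal C(y,r)}f\,d\mu$ is continuous and $\mu(\mathcal C(y,r))$ does not depend on $y$. In general, your covering argument already bounds the outer measure of $\{\mathcal M f>t\}$.
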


Now, for a cylinder $ \mathcal{C}_r\subset\R^{n+m}=:\R^N$ with $z \in  \mathcal{C}_r$ and $g\ge0$ locally integrable, define
\begin{equation}\label{eq:reisz_potential}
\mathcal J^{ \mathcal{C}_r}_{\alpha,2}g(z)
:=\sum_{2^i\le2r\max\{2,2^{1/(\alpha+\beta)}\}}
2^{i\alpha}\left(\fint_{ \mathcal{C}_{2^i}(z)}|g|^2\,dw\right)^{1/2},
\end{equation}
where the sum runs over dyadic scales $2^i$ and $ \mathcal{C}_{2^i}(z)$ denotes the anisotropic cylinder of radius $2^i$ centered at $z$.

\begin{theorem}[Representation formula]
\label{thm:representation}   {Let $u\in\mathcal H^{\mathrm{loc}}_{\alpha,\beta}(\R^{n+m})$, as in \eqref{eq:H-loc}. Then for every anisotropic Grushin cylinder $\mathcal{C}_r\subseteq\R^{n+m}$,}
\[
|u(z)-u_{ \mathcal{C}_r}|
\le C(\alpha, \beta,n,m)\,\mathcal J^{ \mathcal{C}_r}_{\alpha,2}\big(|\nabla_{\mathcal L}u|\big)(z)
\]
for a.e.\ $z\in \mathcal{C}_r$.
\end{theorem}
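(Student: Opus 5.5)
The plan is the standard telescoping argument of Haj\l asz--Koskela, driven by the uniform Poincar\'e inequality of Theorem~\ref{thm:uniform-poincare}. Fix a Lebesgue point $z\in\mathcal C_r$ of $u$; such points have full measure because $u\in L^2_{\mathrm{loc}}$ and the cylinders $\mathcal C_\rho(z)$ form a differentiation basis for Lebesgue measure. The latter holds since they are comparable to $d_{\alpha,\beta}$-balls by Lemma~\ref{lem:ball-cylinder}, and the resulting quasi-metric space is doubling with $\mu(\mathcal C_\rho)=c\,\rho^{Q_{\alpha,\beta}}$. Let $2^{i_0}$ be the largest dyadic scale allowed in \eqref{eq:reisz_potential}, so that $2^{i_0}>r\max\{2,2^{1/(\alpha+\beta)}\}$. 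Put $\mathcal C^i:=\mathcal C_{2^i}(z)$ for $i\le i_0$. Then
\[
|u(z)-u_{\mathcal C_r}|\le\sum_{i<i_0}|u_{\mathcal C^{i}}-u_{\mathcal C^{i+1}}|+|u_{\mathcal C^{i_0}}-u_{\mathcal C_r}|,
\]
since $u_{\mathcal C^i}\to u(z)$ as $i\to-\infty$.

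Each consecutive term is handled by nesting and scaling. We have $\mathcal C^i\subset\mathcal C^{i+1}$ and $\mu(\mathcal C^{i+1})=2^{Q_{\alpha,\beta}}\mu(\mathcal C^i)$. Hence
\[
|u_{\mathcal C^{i}}-u_{\mathcal C^{i+1}}|\le 2^{Q_{\alpha,\beta}}\fint_{\mathcal C^{i+1}}|u-u_{\mathcal C^{i+1}}|\le 2^{Q_{\alpha,\beta}}\Big(\fint_{\mathcal C^{i+1}}|u-u_{\mathcal C^{i+1}}|^2\Big)^{1/2}.
\]
Then \eqref{eq:poincare-uniform} bounds this by $C\,2^{(i+1)\alpha}$ times
\[
\Big(\fint_{\mathcal C^{i+1}}\big([u]^2_{x,\alpha;\mathcal C^{i+1}}\text{-density}+|x|^{2\beta}|\nabla_yu|^2\big)\Big)^{1/2}.
\]
Here Remark~\ref{rem:Hloc-restricts} applies, because $u|_{\mathcal C^{i+1}}\in\mathcal H_{\alpha,\beta}(\mathcal C^{i+1})$. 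Lemma~\ref{lem:energy-comparison} then replaces the restricted seminorm by $\int_{\mathcal C^{i+1}}D_{x,\alpha}u$, so the whole bracket is at most $\int_{\mathcal C^{i+1}}|\nabla_{\mathcal L}u|^2$. This is precisely the $(i+1)$-th summand of $\mathcal J^{\mathcal C_r}_{\alpha,2}(|\nabla_{\mathcal L}u|)(z)$. Note that the direction of the comparison in Lemma~\ref{lem:energy-comparison} is the favourable one here: the nonlocal part of the energy only enlarges the right-hand side.

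The remaining term is where I expect the only real bookkeeping. We need $\mathcal C_r\subset\mathcal C^{i_0}$ with comparable measure, for every $z\in\mathcal C_r$. Write $\mathcal C_r=\mathcal C_r(z_1)$. If $w\in\mathcal C_r$, then $|x_w-x_z|<2r\le2^{i_0}$ and $|y_w-y_z|<2r^{\gamma}$. To absorb the latter we need $2r^{\gamma}\le(2^{i_0})^{\gamma}$, i.e.\ $2^{i_0}\ge2^{1/\gamma}r$. This explains the factor $\max\{2,2^{1/(\alpha+\beta)}\}$ in \eqref{eq:reisz_potential}.

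For the measure comparison, $2^{i_0}\le 2r\max\{2,2^{1/\gamma}\}$ gives $\mu(\mathcal C^{i_0})\le C(\gamma,n,m)\,\mu(\mathcal C_r)$. Hence
\[
|u_{\mathcal C^{i_0}}-u_{\mathcal C_r}|\le C\fint_{\mathcal C^{i_0}}|u-u_{\mathcal C^{i_0}}|,
\]
and Poincar\'e bounds this as before by $C\,2^{i_0\alpha}\big(\fint_{\mathcal C^{i_0}}|\nabla_{\mathcal L}u|^2\big)^{1/2}$, which is the top term of the sum. Summing all estimates gives the claim with $C=C(\alpha,\beta,n,m)$.

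The only delicate points are therefore the choice of the top scale, handled as above, and the justification that a.e.\ $z$ is a Lebesgue point with respect to cylinders. For the latter I would invoke the Lebesgue differentiation theorem on the doubling space $(\R^{n+m},d_{\alpha,\beta},\mu)$, which follows from Theorem~\ref{thm:maximal_fn}.
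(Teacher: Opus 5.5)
Your proposal is correct and follows essentially the same route as the paper. The paper also telescopes over the dyadic cylinders $\mathcal C_{2^i}(z)$ at a Lebesgue point, compares consecutive averages through the volume ratio $2^{Q_{\alpha,\beta}}$, applies H\"older and the uniform Poincar\'e inequality (with Lemma~\ref{lem:energy-comparison} to pass to $|\nabla_{\mathcal L}u|^2$), and absorbs the top term using $\mathcal C_r\subset\mathcal C_{2^{i_0}}(z)$ with comparable measure, a containment that your explicit check of the factor $\max\{2,2^{1/(\alpha+\beta)}\}$ justifies where the paper only asserts it.
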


\begin{proof}
Let $z\in \mathcal{C}_r$ be a Lebesgue point of $u$, and let $i_0$ be the least integer with $2^{i_0}>r\max\{2,2^{1/(\alpha+\beta)}\}$, so that $\mathcal{C}_r\cap( \mathcal{C}_{2^{i_0}}(z)\setminus \mathcal{C}_{2^{i_0-1}}(z))\ne\emptyset$ and $ \mathcal{C}_r\subseteq \mathcal{C}_{2^{i_0}}(z)$. Write $u_i:=u_{ \mathcal{C}_{2^i}(z)}$. Since $u_i\to u(z)$ as $i\to-\infty$ by the Lebesgue differentiation theorem, H\"older's inequality and the Poincar\'e inequality of Theorem~\ref{thm:uniform-poincare} give

\begin{align*}
|u(z)-u_{ \mathcal{C}_r}|
&\le\big|u_{ \mathcal{C}_r}-u_{i_0}(z)\big|
+\sum_{i=-\infty}^{i_0}\big|u_i(z)-u_{i-1}(z)\big|\\
&\le\fint_{ \mathcal{C}_r}|u(w)-u_{i_0}(z)|\,dw
+\sum_{i=-\infty}^{i_0}\fint_{ \mathcal{C}_{2^{i-1}}(z)}|u(w)-u_i(z)|\,dw\\
&\le\frac{| \mathcal{C}_{2^{i_0}}|}{| \mathcal{C}_r|}
\fint_{ \mathcal{C}_{2^{i_0}}(z)}|u(w)-u_{i_0}(z)|\,dw
+\sum_{i=-\infty}^{i_0}\frac{| \mathcal{C}_{2^i}|}{| \mathcal{C}_{2^{i-1}}|}
\fint_{ \mathcal{C}_{2^i}(z)}|u(w)-u_i(z)|\,dw\\
&\le C(\alpha, \beta,n,m)\sum_{i=-\infty}^{i_0}\fint_{\mathcal{C}_{2^i}(z)}|u(w)-u_i(z)|\,dw \\
&\le C(\alpha, \beta,n,m)\sum_{i=-\infty}^{i_0}\left(\fint_{\mathcal
\mathcal{C}_{2^i}(z)}|u(w)-u_i(z)|^2\,dw\right)^{1/2}\\
&\le C(\alpha, \beta,n,m)\,\sum_{i=-\infty}^{i_0}2^{i\alpha}
\left(\fint_{ \mathcal{C}_{2^i}(z)}|\nabla_{\mathcal L}u|^2\,dw\right)^{1/2}
\le C(\alpha, \beta,n,m)\,\mathcal J^{ \mathcal{C}_r}_{\alpha,2}|\nabla_{\mathcal L}u|(z).
\end{align*}
\end{proof}

\begin{theorem}[Weak-type estimate for the Riesz potential]
\label{thm:weaktype-riesz} {Fix $z_0=(x_0,y_0)\in\R^{n+m}$ and $R>0$, set $\varkappa:=\max\{2,2^{1/(\alpha+\beta)}\}=\max\{2,\kappa(\gamma)^2\}$ with $\kappa(\gamma)$ as in Lemma~\ref{lem:ball-cylinder}, put $R_0:=2\varkappa R$, and define the set}
\begin{equation}
\label{eq:VR}
\begin{aligned}
V_R(z_0)
&:= B^n\big(x_0,R+R_0\big)\times  B^m\big(y_0,R^{\alpha+\beta}+R_0^{\alpha+\beta}\big).
\end{aligned}
\end{equation}
 {Then for every $u\in\mathcal H^{\mathrm{loc}}_{\alpha,\beta}(\R^{n+m})$ one has $\mathcal J^{\mathcal{C}_R}_{\alpha,2}|\nabla_{\mathcal L}u| \in L^{2^*_{\alpha,\beta}}_w(\mathcal{C}_R(z_0))$, and}
\begin{equation}
\label{eq:main_weak_type_riesz}
\Big|\big\{z\in \mathcal{C}_R(z_0):\mathcal J^{ \mathcal{C}_R}_{\alpha,2}|\nabla_{\mathcal L}u|(z)>t\big\}\Big|
\le C\,t^{-2^*_{\alpha,\beta}}\|\nabla_{\mathcal L}u\|_{L^2(V_R)}^{2^*_{\alpha,\beta}},
\end{equation}
where for \(1\le q<\infty\), we write \( L^q_w(\Omega)\), for the weak Lebesgue space, equipped with the quasi-norm
\[
\|f\|_{L^q_w(\Omega)}
:=
\sup_{t>0}t\,|\{z\in\Omega:|f(z)|>t\}|^{1/q}.
\]
\end{theorem}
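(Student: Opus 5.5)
The plan is the classical Hedberg balancing argument. Write $g:=|\nabla_{\mathcal L}u|\chi_{V_R}$. For $z\in\mathcal C_R(z_0)$, every cylinder $\mathcal C_{2^i}(z)$ appearing in \eqref{eq:reisz_potential} has $2^i\le 2\varkappa R=R_0$. Hence $|x-x_0|<R+R_0$ and $|y-y_0|<R^{\gamma}+R_0^{\gamma}$, so $\mathcal C_{2^i}(z)\subset V_R(z_0)$. Consequently $g$ may replace $|\nabla_{\mathcal L}u|$ in $\mathcal J^{\mathcal C_R}_{\alpha,2}$, and $g\in L^2(\R^{n+m})$ by Remark~\ref{rem:Hloc-restricts}. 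Each term of the sum is then controlled in two ways.

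\emph{Small scales.} Since $(\fint_{\mathcal C_{2^i}(z)} g^2)^{1/2}\le C\,(\mathcal M g^2(z))^{1/2}$, where $\mathcal M$ is the maximal operator over cylinders (comparable to balls by Lemma~\ref{lem:ball-cylinder}), we get for any threshold $\delta>0$
\[
\sum_{2^i\le\delta}2^{i\alpha}\Big(\fint_{\mathcal C_{2^i}(z)}g^2\Big)^{1/2}\le C\,\delta^{\alpha}\,(\mathcal M g^2(z))^{1/2}.
\]

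\emph{Large scales.} Since $|\mathcal C_{2^i}|=c\,2^{iQ}$, each term satisfies $2^{i\alpha}(\fint g^2)^{1/2}\le C2^{i(\alpha-Q/2)}\|g\|_{L^2}$. Because $Q>2\alpha$ the geometric sum converges, and
\[
\sum_{2^i>\delta}2^{i\alpha}\Big(\fint_{\mathcal C_{2^i}(z)}g^2\Big)^{1/2}\le C\,\delta^{\alpha-Q/2}\|g\|_{L^2}.
\]

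Optimizing over $\delta$, that is, taking $\delta^{Q/2}=\|g\|_{L^2}/(\mathcal Mg^2(z))^{1/2}$, gives the pointwise bound
\[
\mathcal J^{\mathcal C_R}_{\alpha,2}|\nabla_{\mathcal L}u|(z)\le C\,\|g\|_{L^2}^{2\alpha/Q}\,(\mathcal Mg^2(z))^{\frac12(1-\frac{2\alpha}{Q})}.
\]
If $\mathcal M g^2(z)=0$ then $g=0$ and the claim is trivial. The restriction $2^i\le R_0$ on the range of scales only helps here.

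Hence $\{\mathcal J>t\}\subset\{\mathcal Mg^2>s\}$ with
\[
s=\big(t\,C^{-1}\|g\|_2^{-2\alpha/Q}\big)^{2Q/(Q-2\alpha)}=C\,t^{2^*_{\alpha,\beta}}\|g\|_2^{-4\alpha/(Q-2\alpha)}.
\]
The weak $(1,1)$ bound of Theorem~\ref{thm:maximal_fn} applied to $g^2\in L^1$ gives
\[
|\{\mathcal J>t\}|\le \frac Cs\|g\|_2^2=C\,t^{-2^*_{\alpha,\beta}}\|g\|_2^{2+4\alpha/(Q-2\alpha)}=C\,t^{-2^*_{\alpha,\beta}}\|g\|_2^{2^*_{\alpha,\beta}},
\]
using $2+4\alpha/(Q-2\alpha)=2Q/(Q-2\alpha)$. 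This is \eqref{eq:main_weak_type_riesz}, and the weak-$L^{2^*}$ membership follows from it.

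The main point of care is the geometry. First, the containment $\mathcal C_{2^i}(z)\subset V_R$ must hold with the precise choice of $\varkappa$; this is exactly why $V_R$ carries both $R_0$ and $R_0^{\gamma}$. Second, we need the doubling and Vitali property of the cylinders, so that Theorem~\ref{thm:maximal_fn} applies with the cylinder-based maximal function. Both come from Lemma~\ref{lem:ball-cylinder} and the covering remark in the proof of Theorem~\ref{thm:Sobolev_subcritical}. I expect nothing else to be delicate.
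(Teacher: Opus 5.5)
Your proof is correct and follows the paper's argument. You split the dyadic sum at a threshold, bound the small scales by the cylinder maximal function of $|\nabla_{\mathcal L}u|^2$ restricted to $V_R$, bound the large scales by $\|\nabla_{\mathcal L}u\|_{L^2(V_R)}$ through the geometric tail, optimize the threshold, and finish with the weak $(1,1)$ maximal inequality. The only difference is that the paper caps the threshold at $R_0$ and must treat the regime $r_z>R_0$ separately (its set $E_2$, bounded trivially by $|\mathcal C_R|$), whereas you note that the tail bound holds for every $\delta>0$, which makes that case split unnecessary.
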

\begin{proof}
Fix $0<r\le R_0$, $ \mathcal{C}_R =  \mathcal{C}_R(z_0)$ and split $\mathcal J^{ \mathcal{C}_R}_{\alpha,2}|\nabla_{\mathcal L}u|=J_r|\nabla_{\mathcal L}u|+J^r|\nabla_{\mathcal L}u|$, where
\[
J_r|\nabla_{\mathcal L}u|(z):=\sum_{2^i\le r}2^{i\alpha}
\left(\fint_{ \mathcal{C}_{2^i}(z)}|\nabla_{\mathcal L}u|^2\,dw\right)^{1/2},
\] and \[
J^r|\nabla_{\mathcal L}u|(z):=\sum_{r<2^i\le R_0}2^{i\alpha}
\left(\fint_{ \mathcal{C}_{2^i}(z)}|\nabla_{\mathcal L}u|^2\,dw\right)^{1/2}.
\]
For $z\in \mathcal{C}_R$, we have $\mathcal{C}_{R_0}(z) \subset V_R$. Thus summing the geometric series in $J_r$ and comparing with the maximal function of $|\nabla_{\mathcal L}u|^2$ gives
\begin{equation}
\label{eq:Jn_bound}
J_r|\nabla_{\mathcal L}u|(z)
\le\Big(\sum_{2^i\le r}2^{i\alpha}\Big)\big(\mathcal M_{V_R}|\nabla_{\mathcal
L}u|^2(z)\big)^{1/2}
\le C(\alpha)\,r^\alpha\big(\mathcal M_{V_R}|\nabla_{\mathcal
L}u|^2(z)\big)^{1/2}
\end{equation}
where $\mathcal M_{V_R}|\nabla_{\mathcal L}u|^2(z) = \displaystyle \sup_{r > 0} \frac{1}{\mu(\mathcal{C}(z,r))} \int_{V_R \cap \mathcal{C}(z,r)} |\nabla_{\mathcal L}u|^2(w) \, dw$.

Since $\alpha<Q/2$, summing the geometric series in $J^r$ instead gives
\begin{equation}
\label{eq:Jn_upper}
\begin{aligned}
J^r|\nabla_{\mathcal L}u|(z)
&=\sum_{r<2^i\le R_0}2^{i(\alpha-Q/2)}
\left(\int_{ \mathcal{C}_{2^i}(z)}|\nabla_{\mathcal L}u|^2\,dw\right)^{1/2}\\
&\le C(\alpha, \beta,n,m)\,r^{\alpha-Q/2}
\left(\int_{V_R}|\nabla_{\mathcal L}u|^2\,dw\right)^{1/2}.
\end{aligned}
\end{equation}
Combining \eqref{eq:Jn_bound}-\eqref{eq:Jn_upper},
\begin{equation}
\label{eq:J_combined}
\mathcal J^{ \mathcal{C}_R}_{\alpha,2}|\nabla_{\mathcal L}u|(z)
\le C(\alpha, \beta,n,m)\left[r^\alpha\big(\mathcal M_{V_R}|\nabla_{\mathcal
L}u|^2(z)\big)^{1/2}
+r^{\alpha-Q/2}\left(\int_{V_R}|\nabla_{\mathcal
L}u|^2\,dw\right)^{1/2}\right].
\end{equation}

The first term is dominated by the second precisely when
\begin{equation*}
r\le\left[\int_{V_R}|\nabla_{\mathcal L}u|^2\,dw
\Big/\mathcal M_{V_R}\big(|\nabla_{\mathcal L}u|^2(z)\big)\right]^{1/Q}=:r_z.
\end{equation*}
If $r_z\le R_0$, choosing $r=r_z$ in~\eqref{eq:J_combined} gives
\begin{equation}
\label{eq:J_Rbound0}
\mathcal J^{ \mathcal{C}_R}_{\alpha,2}|\nabla_{\mathcal L}u|(z)
\le C(\alpha, \beta,n,m)\left(\int_{V_R}|\nabla_{\mathcal
L}u|^2\,dw\right)^{\alpha/Q}
\big(\mathcal M_{V_R}(|\nabla_{\mathcal L}u|^2(z))\big)^{(Q-2\alpha)/2Q};
\end{equation}
if instead $r_z>R_0$, choosing $r=R_0$ gives
\begin{equation}
\label{eq:J_Rbound}
\mathcal J^{ \mathcal{C}_R}_{\alpha,2}|\nabla_{\mathcal L}u|(z)
\le C(\alpha, \beta,n,m)\,R^{\alpha-Q/2}
\left(\int_{V_R}|\nabla_{\mathcal L}u|^2\,dw\right)^{1/2}.
\end{equation}
Let $E_1,E_2\subset \mathcal{C}_R$ denote the sets on which \eqref{eq:J_Rbound0}, respectively \eqref{eq:J_Rbound}, holds, and let $ \mathcal{C}_{R,t}:=\{z\in \mathcal{C}_R:\mathcal J^{\mathcal{C}_R}_{\alpha,2}|\nabla_{\mathcal L}u|(z)>t\}$, so that $| \mathcal{C}_{R,t}|\le| \mathcal{C}_{R,t}\cap E_1|+| \mathcal{C}_{R,t}\cap E_2|$. By~\eqref{eq:J_Rbound0} and the maximal theorem (Theorem~\ref{thm:maximal_fn}),
\begin{align}
| \mathcal{C}_{R,t} &\cap E_1| \notag \\
&\le\Big|\Big\{z\in \mathcal{C}_R:\mathcal M_{V_R}(|\nabla_{\mathcal L}u|^2(z))
>C(\alpha, \beta,n,m)\,t^{2Q/(Q-2\alpha)}
\Big(\int_{V_R}|\nabla_{\mathcal L}u|^2\,dw\Big)^{-2\alpha/(Q-2\alpha)}\Big\}\Big|\notag\\
&\le C(\alpha, \beta,n,m)\,t^{-2Q/(Q-2\alpha)}
\left(\int_{V_R}|\nabla_{\mathcal L}u|^2\,dw\right)^{2\alpha/(Q-2\alpha)}
\int_{V_R}|\nabla_{\mathcal L}u|^2\,dw\notag\\
&=C(\alpha, \beta,n,m)\,t^{-2^*_{\alpha,\beta}}
\left(\int_{V_R}|\nabla_{\mathcal L}u|^2\,dw\right)^{2^*_{\alpha,\beta}/2}.
\label{eq:E1_bound}
\end{align}
By~\eqref{eq:J_Rbound}, $ \mathcal{C}_{R,t}\cap E_2=\emptyset$ once $t$ exceeds $C(\alpha,\beta)R^{\alpha-Q/2}\big(\int_{V_R}|\nabla_{\mathcal L}u|^2\,dw\big)^{1/2}$; for smaller $t$,
\begin{equation}
\label{eq:E2_bound}
| \mathcal{C}_{R,t}\cap E_2|\le| \mathcal{C}_R|=C(n, m)R^Q
\le C(n,m)\,t^{-2^*_{\alpha,\beta}}
\left(\int_{V_R}|\nabla_{\mathcal L}u|^2\,dw\right)^{2^*_{\alpha,\beta}/2}.
\end{equation}
Adding \eqref{eq:E1_bound} and \eqref{eq:E2_bound} gives \eqref{eq:main_weak_type_riesz}.
\end{proof}

Combining Theorems~\ref{thm:representation} and~\ref{thm:weaktype-riesz} yields the main weak-type estimate for $u$ itself:  {for every $z_0\in\R^{n+m}$, every $R>0$ and every $u\in\mathcal H^{\mathrm{loc}}_{\alpha,\beta}(\R^{n+m})$,}

\begin{equation}
\label{eq:main_weak_type}
\big|\{z\in \mathcal{C}_R:|u-u_{ \mathcal{C}_R}|>t\}\big|
\le C(\alpha, \beta,n,m)\,t^{-2^*_{\alpha,\beta}}
\left(\int_{V_R}|\nabla_{\mathcal L}u|^2\,dw\right)^{2^*_{\alpha,\beta}/2}.
\end{equation}

\medskip Next, to get the critical exponent inequality from this weak inequality we need to work with specific truncations of the functions and their respective estimates. Firstly, we estimate the nonlocal term truncations. To this end, for $k\in\mathbb Z$ set $t_1^{(k)}=2^{k-1}$, $t_2^{(k)}=2^k$, and $w_k:=T^{t_2^{(k)}}_{t_1^{(k)}}[v]=\min\{\max\{0,v-t_1^{(k)}\},t_2^{(k)}-t_1^{(k)}\}$, where $v \geq 0$ is a measurable function. For simplicity, we write sometimes $v_{t_1}^{t_2} := T^{t_2}_{t_1}[v]$.

\begin{lemma}[Telescoping identity]
\label{lem:telescope} For a.e.\ $(x,y)\in \mathcal{C}_r$, $v(x,y)=\sum_{k\in\mathbb Z}w_k(x,y)$.
\end{lemma}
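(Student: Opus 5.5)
The identity is pointwise and elementary, so I would prove it for each fixed point, taking $v\ge0$ measurable and finite a.e. (the latter is needed and holds in all applications, e.g.\ $v=|u|$ with $u\in L^2_{\mathrm{loc}}$). Fix $(x,y)$ with $s:=v(x,y)\in[0,\infty)$ and set $w_k:=w_k(x,y)=\min\{\max\{0,s-2^{k-1}\},2^{k-1}\}$, using $t_2^{(k)}-t_1^{(k)}=2^{k-1}$.

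\emph{Case $s=0$.} Then $s-2^{k-1}<0$ for all $k$, so every $w_k=0$ and the sum equals $0=s$.

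\emph{Case $s>0$.} Let $k_0\in\mathbb Z$ be the unique integer with $2^{k_0-1}<s\le 2^{k_0}$. I would sort the indices into three groups:
\begin{itemize}
\item for $k\ge k_0+1$ we have $2^{k-1}\ge 2^{k_0}\ge s$, so $w_k=0$;
\item for $k=k_0$ we get $w_{k_0}=s-2^{k_0-1}\in(0,2^{k_0-1}]$;
\item for $k\le k_0-1$ we have $s-2^{k-1}> 2^{k_0-1}-2^{k-1}\ge 2^{k-1}$, because $2^{k_0-1}\ge 2^k$. The cap is therefore active and $w_k=2^{k-1}$.
\end{itemize}
The series then has nonnegative terms and reduces to
\[
\sum_{k\in\mathbb Z}w_k=\Big(s-2^{k_0-1}\Big)+\sum_{k=-\infty}^{k_0-1}2^{k-1},
\qquad\text{with}\qquad
\sum_{k=-\infty}^{k_0-1}2^{k-1}=2^{k_0-1}
\]
as a convergent geometric series. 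So the sum is $s$. In words, the $w_k$ are the lengths of the intersections of $[0,s]$ with the dyadic intervals $(2^{k-1},2^k]$, which partition $(0,\infty)$.

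The only point needing care is the boundary bookkeeping: deciding at which $k$ the truncation saturates, where ties $s=2^{k_0}$ are handled by the convention $2^{k_0-1}<s\le2^{k_0}$. The passage to a.e.\ points is simply the finiteness of $v$ a.e.; on the null set where $v=\infty$ every $w_k$ equals $2^{k-1}$ and the sum diverges, consistent with the identity there as well.
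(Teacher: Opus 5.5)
Your proof is correct and is essentially the paper's argument: fix the point, place $s=v(x,y)$ in a dyadic interval indexed by $k_0$, and observe three things. For $k<k_0$ the truncations saturate at $2^{k-1}$, the $k_0$-th equals $s-2^{k_0-1}$, and those with $k>k_0$ vanish; summing the geometric series $\sum_{k<k_0}2^{k-1}=2^{k_0-1}$ then gives $s$.

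The only difference is cosmetic. You bracket $s$ by $2^{k_0-1}<s\le 2^{k_0}$, whereas the paper uses $2^{k_0-1}\le s<2^{k_0}$, and both conventions work. Also, as your own closing remark shows, the finiteness of $v$ is not actually needed, so the identity holds at every point.
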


\begin{proof}
For \(s=0\), it holds trivially. Fix $s=v(x,y)>0$ and let $k_0$ be the unique integer with $2^{k_0-1}\le s<2^{k_0}$. Then $w_k(x,y)=2^{k-1}$ for $k<k_0$, $w_{k_0}(x,y)=s-2^{k_0-1}$, and $w_k(x,y)=0$ for $k>k_0$. Summing, $\sum_{k<k_0}2^{k-1}+(s-2^{k_0-1})=s$.
\end{proof}

\begin{lemma}
\label{lem:elementary} If $(c_k)_{k\in\mathbb Z}$ are nonnegative reals with $\sum_kc_k=S<\infty$, then $\sum_{k\in\mathbb Z}c_k^2\le S^2$.
\end{lemma}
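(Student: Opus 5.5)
The claim is that for nonnegative reals $(c_k)_{k\in\mathbb Z}$ with $\sum_k c_k=S<\infty$ one has $\sum_k c_k^2\le S^2$. I would prove it by bounding each square by the full sum: since all terms are nonnegative, every individual term is at most the total, so $0\le c_k\le S$ for each $k$. Multiplying by $c_k\ge0$ gives $c_k^2\le S\,c_k$ for every $k$.

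Summing this inequality over $k\in\mathbb Z$ is legitimate for nonnegative series, whose sums do not depend on ordering. It yields
\[
\sum_{k\in\mathbb Z}c_k^2\le S\sum_{k\in\mathbb Z}c_k=S^2 .
\]
The case $S=0$ is covered trivially, since then all $c_k$ vanish.

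An alternative route is to expand $S^2=\big(\sum_k c_k\big)^2=\sum_{j,k}c_jc_k$ and discard the off-diagonal terms $j\neq k$, which are nonnegative. Either way there is no real obstacle; the only point to keep in view is the nonnegativity of the $c_k$, which is what justifies both the bound $c_k\le S$ and the rearrangement of the doubly indexed series.
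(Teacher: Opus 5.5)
Your proof is correct and complete. Each $c_k\le S$ by nonnegativity, so $c_k^2\le S\,c_k$, and summing gives $\sum_k c_k^2\le S^2$. This is the standard argument. The paper itself states this lemma without proof, treating it as elementary, so your argument simply supplies the omitted step. Your alternative (expanding $S^2=\sum_{j,k}c_jc_k$ by Tonelli and dropping the nonnegative off-diagonal terms) is equally valid.
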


Fix $(x,y),(\xi,y)\in \mathcal{C}_r$, put $a:=v(x,y)$, $b:=v(\xi,y)$, and assume without loss of generality $a\le b$. Define the increments $c_k:=w_k(\xi,y)-w_k(x,y)$, $k\in\mathbb Z$.

\begin{lemma}
\label{lem:increments} For every $k\in\mathbb Z$, $c_k\ge0$, and $\sum_{k\in\mathbb Z}c_k=b-a$.
\end{lemma}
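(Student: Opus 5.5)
The plan is to reduce the statement to a one-variable fact: the map $s\mapsto w_k(s):=T^{2^k}_{2^{k-1}}[s]=\min\{\max\{0,s-2^{k-1}\},2^{k-1}\}$ is nondecreasing on $[0,\infty)$, and the functions $w_k$, $k\in\mathbb Z$, sum to the identity. Since $c_k=w_k(b)-w_k(a)$ depends only on the two numbers $a=v(x,y)\le b=v(\xi,y)$, both claims follow from properties of these scalar functions evaluated at $a$ and $b$.

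For nonnegativity, $T^{t_2}_{t_1}$ is the composition of the nondecreasing maps $s\mapsto s-t_1$, $s\mapsto\max\{0,s\}$ and $s\mapsto\min\{s,t_2-t_1\}$, so it is nondecreasing. Hence $a\le b$ gives $w_k(a)\le w_k(b)$, that is, $c_k\ge0$ for every $k$.

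For the sum, I apply the argument of Lemma~\ref{lem:telescope} pointwise to each of the numbers $a$ and $b$, which are nonnegative since $v\ge0$. This gives $\sum_k w_k(a)=a$ and $\sum_k w_k(b)=b$, both series having nonnegative terms and finite sums. The sums are finite because $a,b<\infty$ at a.e.\ point, which I take as the standing a.e.\ qualification, as in Lemma~\ref{lem:telescope}. Two convergent series may be subtracted termwise, so
\[
\sum_k c_k=\sum_k w_k(b)-\sum_k w_k(a)=b-a .
\]
The case $a=0$ is harmless, since then $w_k(a)=0$ for all $k$.

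The only delicate point is justifying the termwise subtraction, and absolute convergence covers it. One could also verify the claim explicitly: $c_k=0$ for $k$ large (above the dyadic level of $b$) and for $k$ very negative (where both $w_k$ equal $2^{k-1}$), and the finitely many intermediate terms sum to $b-a$ by the explicit values listed in the proof of Lemma~\ref{lem:telescope}. I do not expect any step to be a genuine obstacle.
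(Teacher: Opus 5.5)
Your proof is correct and is essentially the paper's argument: monotonicity of each truncation gives $c_k\ge0$, and applying the telescoping identity of Lemma~\ref{lem:telescope} at $a$ and $b$ and subtracting gives $\sum_k c_k=b-a$. Your justification of the termwise subtraction (both series have nonnegative terms and finite sums) is, if anything, cleaner than the paper's.

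One small slip occurs only in your optional aside. If $a=0<b$, then $c_k=w_k(b)=2^{k-1}\neq0$ for all sufficiently negative $k$, so the claim that $c_k$ vanishes for very negative $k$ requires $a>0$. The main argument is unaffected.
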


\begin{proof}
Each truncation $T:=T^{t_2^{(k)}}_{t_1^{(k)}}$ is non-decreasing, and $a\le b$, so $w_k(x,y)=T(a)\le T(b)=w_k(\xi,y)$, i.e.\ $c_k\ge0$. Applying the telescoping identity of Lemma~\ref{lem:telescope} at $(x,y)$ and $(\xi,y)$ and subtracting (justified since all $c_k\ge0$ and the partial sums are bounded by $b-a$) gives $\sum_kc_k=\sum_k\big(w_k(\xi,y)-w_k(x,y)\big)=v(\xi,y)-v(x,y)=b-a$.
\end{proof}

\begin{proposition}
\label{prop:sharp} For every $(x,y),(\xi,y)\in\R^{n+m}$,
\[
\sum_{k\in\mathbb Z}\big|w_k(x,y)-w_k(\xi,y)\big|^2\le\big|v(x,y)-v(\xi,y)\big|^2 .
\]
\end{proposition}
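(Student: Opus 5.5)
The plan is to reduce Proposition~\ref{prop:sharp} to the two preceding lemmas, exactly in the configuration fixed before Lemma~\ref{lem:increments}. Fix $(x,y),(\xi,y)$ and put $a:=v(x,y)$, $b:=v(\xi,y)$. The statement is symmetric in the two points, since both sides are unchanged when they are interchanged. We may therefore assume $a\le b$ without loss of generality. If $a$ or $b$ is $+\infty$ we interpret the right-hand side as $+\infty$ and there is nothing to prove. So we assume $0\le a\le b<\infty$, recalling that $v\ge0$.

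Next we set $c_k:=w_k(\xi,y)-w_k(x,y)$. Lemma~\ref{lem:increments} gives $c_k\ge0$ for every $k\in\mathbb Z$ and $\sum_k c_k=b-a<\infty$.

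One point needs checking. Lemma~\ref{lem:telescope} was stated for a.e.\ point of $\mathcal C_r$, but its proof is purely pointwise in the value $s=v(\cdot)$ and uses only $s\in[0,\infty)$. Hence it holds at every point where $v$ is finite, and so does Lemma~\ref{lem:increments}. This is also where we must be careful with the subtraction of the two telescoping series. Since both series $\sum_k w_k(x,y)=a$ and $\sum_k w_k(\xi,y)=b$ have nonnegative terms and converge, their termwise difference converges to $b-a$. No rearrangement issue arises.

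Finally we apply Lemma~\ref{lem:elementary} to the nonnegative sequence $(c_k)$ with $S=b-a$. This yields $\sum_k c_k^2\le (b-a)^2$, which is the claim because $|w_k(x,y)-w_k(\xi,y)|=c_k$. If Lemma~\ref{lem:elementary} is to be justified rather than assumed, the one-line argument is $c_k\le S$ for every $k$, whence $\sum_k c_k^2\le S\sum_k c_k=S^2$.

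No step is a genuine obstacle. The only delicate point is the bookkeeping just mentioned: the proposition is asserted for all pairs in $\R^{n+m}$, not a.e.\ in $\mathcal C_r$. So one must note that the telescoping identity is a pointwise identity for every finite value of $v$, independent of Lebesgue points or of the cylinder.
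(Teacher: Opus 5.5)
Your proposal is correct and follows the paper's own proof: you reduce by symmetry to $a\le b$, take $c_k\ge0$ with $\sum_k c_k=b-a$ from Lemma~\ref{lem:increments}, and conclude with Lemma~\ref{lem:elementary}. Your extra remarks are accurate and make explicit what the paper leaves implicit: the telescoping identity of Lemma~\ref{lem:telescope} holds at every point where $v$ is finite, not just a.e.\ in $\mathcal C_r$, and Lemma~\ref{lem:elementary} follows in one line from $\sum_k c_k^2\le(\sup_k c_k)\sum_k c_k\le S^2$.
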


\begin{proof}
By symmetry assume $a=v(x,y)\le v(\xi,y)=b$. Since $w_k(x,y)\le w_k(\xi,y)$, Lemma~\ref{lem:increments} gives $|w_k(x,y)-w_k(\xi,y)|=c_k\ge0$ with $\sum_kc_k=b-a$, and Lemma~\ref{lem:elementary} with $S=b-a$ yields
\[
\sum_{k\in\mathbb Z}|w_k(x,y)-w_k(\xi,y)|^2
=\sum_{k\in\mathbb Z}c_k^2\le(b-a)^2=|v(x,y)-v(\xi,y)|^2.\qedhere
\]
\end{proof}

Multiplying the inequality of Proposition~\ref{prop:sharp} by $\tfrac12|x-\xi|^{-n-2\alpha}$ and integrating over all $\xi\in\R^n$ gives the same bound for the unrestricted density $D_{x,\alpha}$:

\begin{corollary}
\label{cor:truncation-D} Let $v\ge0$ be measurable on $\R^{n+m}$ with $\nabla_yv\in L^1_{\mathrm{loc}}$ slice-wise, and let $w_k:=T^{t_2^{(k)}}_{t_1^{(k)}}[v]$ be as above. Then for every measurable $\Omega\subseteq\R^{n+m}$,
\[
\sum_{k\in\mathbb Z}\int_\Omega D_{x,\alpha}w_k\,dz
\le\int_\Omega D_{x,\alpha}v\,dz,
\qquad
\sum_{k\in\mathbb Z}\int_\Omega|x|^{2\beta}|\nabla_yw_k|^2\,dz
\le\int_\Omega|x|^{2\beta}|\nabla_yv|^2\,dz ,
\]
and consequently
\[
\sum_{k\in\mathbb Z}\big\|\nabla_{\mathcal L}w_k\big\|_{L^2(\Omega)}^2
\le\big\|\nabla_{\mathcal L}v\big\|_{L^2(\Omega)}^2 .
\]
The same holds for any reindexing of the dyadic ladder, in particular for the family $\big\{v_{2^{k-2}}^{2^{k-1}}\big\}_{k\in\mathbb Z}$.
\end{corollary}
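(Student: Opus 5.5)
The plan is to integrate Proposition~\ref{prop:sharp} and handle the two energy pieces separately; the third inequality is then their sum.

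\emph{Nonlocal part.} Proposition~\ref{prop:sharp} holds for every pair $(x,y),(\xi,y)$ with no restriction to a cylinder. Multiplying by $\tfrac12|x-\xi|^{-n-2\alpha}\ge0$ and integrating in $\xi\in\R^n$ gives, at each fixed $(x,y)$,
\[
\sum_{k\in\mathbb Z}D_{x,\alpha}w_k(x,y)\le D_{x,\alpha}v(x,y).
\]
Here the interchange of sum and integral is justified by Tonelli's theorem, since all terms are nonnegative. Integrating over $\Omega$ and applying Tonelli once more yields the first inequality, with no integrability assumptions needed.

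\emph{Local part.} Fix $x$ such that $v(x,\cdot)\in W^{1,1}_{\mathrm{loc}}$ on the relevant slice. By the chain rule of Lemma~\ref{lem:lip-stable}, applied slice-wise to the $1$-Lipschitz truncation, we have
\[
\nabla_y w_k=\nabla_y v\,\chi_{\{2^{k-1}<v<2^k\}}\quad\text{a.e.}
\]
The sets $\{2^{k-1}<v<2^k\}$, $k\in\mathbb Z$, are pairwise disjoint. Hence
\[
\sum_k|\nabla_yw_k|^2=|\nabla_yv|^2\chi_{\{v>0\}\setminus\{v\in 2^{\mathbb Z}\}}\le|\nabla_yv|^2\quad\text{a.e.}
\]
Multiplying by $|x|^{2\beta}$ and integrating over $\Omega$ with Tonelli gives the second inequality. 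Adding the two inequalities, with the factor $(1+\beta)^2$ in front of the second, gives the bound for $\sum_k\|\nabla_{\mathcal L}w_k\|^2_{L^2(\Omega)}$ by definition \eqref{eq:grushin-gradient}.

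\emph{Reindexing.} The reindexed family $v_{2^{k-2}}^{2^{k-1}}$ is just $w_{k-1}$, so its sum over $k\in\mathbb Z$ is the same sum. More generally, both arguments used only that the thresholds form a monotone ladder partitioning $(0,\infty)$. That fact gives the telescoping identity, hence Proposition~\ref{prop:sharp}, and it also gives the disjointness of the level bands.

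\emph{Main subtlety.} The only delicate point is the a.e.\ identification of $\nabla_y w_k$ on each slice, in particular on the level sets $\{v=2^k\}$. This is handled by the standard fact, also contained in Lemma~\ref{lem:lip-stable}/\cite{marcus_mizel}, that the weak gradient vanishes a.e.\ on level sets. Even without this fact, the inequality $|\nabla_y w_k|\le|\nabla_yv|\chi_{\{2^{k-1}\le v\le 2^k\}}$ suffices: the overlaps occur only on the sets $\{v=2^k\}$, where $\nabla_y v=0$ a.e.
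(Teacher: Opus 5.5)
Your proposal is correct and follows the paper's proof. The nonlocal part is Proposition~\ref{prop:sharp} integrated in $\xi$ and then over $\Omega$ via Tonelli; the weighted part uses the truncation formula for $\nabla_y w_k$ together with the pairwise disjointness of the level bands; the two are then added with weight $(1+\beta)^2$. (Your open bands $\{2^{k-1}<v<2^k\}$ versus the paper's half-open ones are immaterial. Your ``even without this fact'' fallback still relies on $\nabla_y v=0$ a.e.\ on level sets, so it is not actually independent of that fact.)
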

\begin{proof}
All integrands are nonnegative, so the order of summation and integration is immaterial by Tonelli's theorem. Multiplying the inequality of Proposition~\ref{prop:sharp} by $\tfrac12|x-\xi|^{-(n+2\alpha)}$ and integrating in $\xi\in\R^n$ and then in $(x,y)\in\Omega$ gives the first inequality. For the second, $\nabla_yw_k=\nabla_yv\, \chi_{\{t_1^{(k)}<v\le t_2^{(k)}\}}$ a.e., and the sets $\{t_1^{(k)}<v\le t_2^{(k)}\}$, $k\in\mathbb Z$, are pairwise disjoint. Adding the two, weighted by $1$ and $(1+\beta)^2$, and using \eqref{eq:gradient-on-omega} as an identity gives the last assertion.
\end{proof}

\begin{lemma}
\label{lem:measure_type_lemma} Let $\nu$ be a finite measure on a set $Y$ and let $w\ge0$ be $\nu$-measurable with $\nu(\{w=0\})\ge\nu(Y)/2$. Then for every $t>0$,
\[
\nu(\{w>t\})\le2\inf_{c\in\R}\nu\big(\{|w-c|>t/2\}\big).
\]
\end{lemma}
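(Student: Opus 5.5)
The plan is a case split on the size of $c$ relative to $t/2$, using the hypothesis that $w$ vanishes on at least half of $Y$. Fix $t>0$ and an arbitrary $c\in\R$. It suffices to prove $\nu(\{w>t\})\le 2\,\nu(\{|w-c|>t/2\})$ for this $c$, and then take the infimum over $c$.

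\emph{Case $|c|\le t/2$.} On $\{w>t\}$ we have $w-c>t-t/2=t/2$, so $\{w>t\}\subseteq\{|w-c|>t/2\}$. Hence $\nu(\{w>t\})\le\nu(\{|w-c|>t/2\})$, which is stronger than required.

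\emph{Case $|c|>t/2$.} On the zero set $\{w=0\}$ we have $|w-c|=|c|>t/2$, so $\{w=0\}\subseteq\{|w-c|>t/2\}$. The hypothesis then gives
\[
\nu(\{|w-c|>t/2\})\ge\nu(\{w=0\})\ge\nu(Y)/2 .
\]
Since $\{w>t\}\subseteq Y$, it follows that
\[
\nu(\{w>t\})\le\nu(Y)\le 2\,\nu(\{|w-c|>t/2\}).
\]

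The two cases together give the inequality for every $c$, and taking the infimum completes the argument. The only point needing care is the threshold: choosing it at $|c|\le t/2$ versus $|c|>t/2$ makes both inclusions valid with strict inequalities, and finiteness of $\nu$ ensures every quantity involved is finite.
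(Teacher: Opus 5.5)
Your proof is correct and follows essentially the same route as the paper: a case split on $c$ relative to $t/2$, using $\{w>t\}\subseteq\{|w-c|>t/2\}$ in one case and $\{w=0\}\subseteq\{|w-c|>t/2\}$ together with $\nu(\{w=0\})\ge\nu(Y)/2$ in the other. The only difference is that the paper splits on $c\le t/2$ versus $c>t/2$ rather than on $|c|$, which is immaterial since both versions of each inclusion are valid.
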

\begin{proof}
Fix $c\in\mathbb{R}$. If $c\le t/2$, then \( \{w>t\}\subset \{|w-c|>t/2\}, \) and hence \( \nu(\{w>t\}) \le \nu(\{|w-c|>t/2\}). \) On the other hand, if $c>t/2$, then \( \{w=0\}\subset \{|w-c|>t/2\}. \) Therefore,
\[
\nu(\{|w-c|>t/2\})
\ge \nu(\{w=0\})
\ge \frac{\nu(Y)}{2}
\ge \frac{\nu(\{w>t\})}{2}.
\]
Thus, in either case,
\[
\nu(\{w>t\})
\le 2\nu(\{|w-c|>t/2\}).
\]
Taking the infimum over $c\in\mathbb{R}$, which completes the proof.
\end{proof}

\begin{theorem}[Fractional Grushin-Sobolev inequality on anisotropic cylinders]
\label{thm:sobolev-cylinders} Let $0<\alpha<1$, $\beta\ge0$, $Q=n+m(\alpha+\beta)$, and $2^*_{\alpha,\beta}=2Q/(Q-2\alpha)$. Let $ \mathcal{C}_R=B_R^n\times B_{R^{\alpha+\beta}}^m \subset\R^n\times\R^m$ and let  {$u\in\mathcal H^{\mathrm{loc}}_{\alpha,\beta}(\R^{n+m})$, the set $V_R$ being as in \eqref{eq:VR}}. Then
\begin{equation}
\label{eq:Sobolev_Grushin_cylinder}
\inf_{c\in\R}\left(\int_{ \mathcal{C}_R}|u-c|^{2^*_{\alpha,\beta}}\,dz\right)^{1/2^*_{\alpha,\beta}}
\le C_2\left(\int_{V_R}|\nabla_{\mathcal L}u|^2\,dz\right)^{1/2},
\end{equation}
with $C_2=8(4C_1)^{1/2^*_{\alpha,\beta}}$, where $C_1=C(\alpha, \beta,n,m)$ is the constant in~\eqref{eq:main_weak_type}.
\end{theorem}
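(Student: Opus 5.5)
We follow the Maz'ya truncation scheme of \cite{hajlasz_koskela}. The weak-type bound \eqref{eq:main_weak_type} is upgraded to a strong one using the dyadic truncations $w_k$ and the nonlocal almost-orthogonality of Corollary~\ref{cor:truncation-D}.

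\emph{Reduction.} Choose a median $c_0$ of $u$ on $\mathcal C_R$, so that $|\{u-c_0>0\}\cap\mathcal C_R|\le|\mathcal C_R|/2$ and $|\{u-c_0<0\}\cap\mathcal C_R|\le|\mathcal C_R|/2$. Set $v_\pm:=(u-c_0)_\pm$. By Lemma~\ref{lem:lip-stable}, $v_\pm\in\mathcal H^{\mathrm{loc}}_{\alpha,\beta}$ with $|\nabla_{\mathcal L}v_\pm|\le|\nabla_{\mathcal L}u|$. Since $|u-c_0|^{2^*}=v_+^{2^*}+v_-^{2^*}$, it suffices to bound $\int_{\mathcal C_R}v^{2^*}$ for $v=v_\pm\ge0$ with $|\{v=0\}\cap\mathcal C_R|\ge|\mathcal C_R|/2$. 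To be careful, we also want the bound to involve only $\sum_\pm\|\nabla_{\mathcal L}v_\pm\|^2$ rather than $2\|\nabla_{\mathcal L}u\|^2$. Because $D_{x,\alpha}$ does not split between $v_+$ and $v_-$, we simply accept the resulting factor of $2$ and absorb it into $C_2$.

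\emph{Level-set estimate.} Fix $k\in\mathbb Z$ and set $w:=v^{2^{k-1}}_{2^{k-2}}$ (the reindexed ladder of Corollary~\ref{cor:truncation-D}). Then $w\ge0$, $w$ vanishes wherever $v=0$, and $w=2^{k-2}$ on $\{v>2^{k-1}\}$. Apply Lemma~\ref{lem:measure_type_lemma} on $Y=\mathcal C_R$ with threshold $t=2^{k-2}-\epsilon$, letting $\epsilon\to0$; this is fine because $\{v\ge2^{k-1}\}\subset\{w\ge 2^{k-2}\}$. Take $c=w_{\mathcal C_R}$ and use \eqref{eq:main_weak_type} for $w$, which is legitimate since $w\in\mathcal H^{\mathrm{loc}}_{\alpha,\beta}$ by Lemma~\ref{lem:lip-stable}. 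This gives
\[
|\{v\ge2^{k-1}\}\cap\mathcal C_R|\le 2C_1\,2^{-(k-3)2^*_{\alpha,\beta}}\Big(\int_{V_R}|\nabla_{\mathcal L}w|^2\Big)^{2^*_{\alpha,\beta}/2}.
\]

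\emph{Summation.} Write
\[
\int_{\mathcal C_R}v^{2^*}\le\sum_k 2^{k2^*}|\{2^{k-1}\le v<2^k\}\cap\mathcal C_R|\le\sum_k2^{k2^*}|\{v\ge2^{k-1}\}\cap\mathcal C_R|.
\]
Inserting the level-set estimate, the powers of $2$ cancel up to a factor $2^{3\cdot2^*}$. We are left with $\sum_k a_k^{2^*/2}$, where $a_k:=\|\nabla_{\mathcal L}v^{2^{k-1}}_{2^{k-2}}\|^2_{L^2(V_R)}$. Since $2^*/2\ge1$, we have $\sum_k a_k^{2^*/2}\le(\sum_ka_k)^{2^*/2}$, and Corollary~\ref{cor:truncation-D} applied with $\Omega=V_R$ gives $\sum_ka_k\le\|\nabla_{\mathcal L}v\|^2_{L^2(V_R)}$. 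Taking $2^*$-th roots gives $C_2=8(4C_1)^{1/2^*}$. Here the $4$ collects the factor $2$ from Lemma~\ref{lem:measure_type_lemma} and the factor $2$ from combining $v_\pm$, and the $8=2^3$ comes from the level shifts.

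\emph{Main obstacle.} The delicate step is the nonlocal part of the orthogonality. The density $D_{x,\alpha}w_k$ integrated over $V_R$ involves values of $v$ outside $V_R$, so the truncation property cannot be used in the usual local form (which would need $\nabla w_k$ supported on $\{t_1<v\le t_2\}$). This is exactly why we rely on the pointwise inequality of Proposition~\ref{prop:sharp}, integrated against the unrestricted kernel, rather than on disjointness of supports. We must also check that the representation and weak-type estimates were stated with the unrestricted $\|\nabla_{\mathcal L}\cdot\|_{L^2(V_R)}$, which they are, so the two ingredients are compatible.
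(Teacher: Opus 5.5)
Your proposal is correct and follows the paper's proof essentially step for step: median splitting into $v_\pm$, Lemma~\ref{lem:measure_type_lemma} combined with \eqref{eq:main_weak_type} for the reindexed truncations $v^{2^{k-1}}_{2^{k-2}}$, the dyadic layer-cake sum, superadditivity of $s\mapsto s^{2^*_{\alpha,\beta}/2}$, and Corollary~\ref{cor:truncation-D} on $\Omega=V_R$, with the same bookkeeping $2^{3\cdot 2^*_{\alpha,\beta}+2}C_1$ giving $C_2=8(4C_1)^{1/2^*_{\alpha,\beta}}$. One side remark needs correcting: $D_{x,\alpha}$ does split favourably, because $v_+v_-=0$ makes the cross term $2\big(v_+(x,y)v_-(\xi,y)+v_-(x,y)v_+(\xi,y)\big)$ nonnegative, hence $|\nabla_{\mathcal L}v_+|^2+|\nabla_{\mathcal L}v_-|^2\le|\nabla_{\mathcal L}u|^2$ pointwise and the factor $2$ from combining $v_\pm$ could be dropped, although keeping it, as you and the paper do, is harmless.
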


\begin{proof}
Choose $b\in\R$ (a median of $u$ on $ \mathcal{C}_R$) with $|\{u\ge b\}|\ge| \mathcal{C}_R|/2$ and $|\{u\le b\}|\ge| \mathcal{C}_R|/2$. Set $v_+:=\max\{u-b,0\}$, $v_-:=-\min\{u-b,0\}$, and let $v$ denote either $v_+$ or $v_-$ below. {Lemma~\ref{lem:lip-stable} gives $v_{t_1}^{t_2}\in\mathcal H^{\mathrm{loc}}_{\alpha,\beta}(\R^{n+m})$ with $|\nabla_{\mathcal L}v_{t_1}^{t_2}|\le|\nabla_{\mathcal L}u|$ a.e.; moreover $\{v_{t_1}^{t_2}=0\}\supseteq\{v=0\}$, so $|\{v_{t_1}^{t_2}=0\}|\ge|\mathcal{C}_R|/2$}. Applying Lemma~\ref{lem:measure_type_lemma} together with the weak-type estimate~\eqref{eq:main_weak_type}, we obtain
\begin{align*}
\sup_{t\ge0}\big|\{v_{t_1}^{t_2}>t\}\big|\,t^{2^*_{\alpha,\beta}}
&\le2^{2^*_{\alpha,\beta}+1}\inf_{c\in\R}\sup_{t\ge0}
\Big|\big\{|v_{t_1}^{t_2}-c|>\tfrac t2\big\}\Big|\Big(\frac t2\Big)^{2^*_{\alpha,\beta}}\\
&\le2^{2^*_{\alpha,\beta}+1}C_1
\big\|\nabla_{\mathcal L}v_{t_1}^{t_2}\big\|_{L^2(V_R)}^{2^*_{\alpha,\beta}}.
\end{align*}
By a standard dyadic decomposition, Proposition~\ref{prop:sharp} and above, we obtain

\begin{align*}
\int_{ \mathcal{C}_R}v^{2^*_{\alpha,\beta}}\,dz
&\le\sum_{k=-\infty}^\infty2^{k\cdot2^*_{\alpha,\beta}}
\big|\{2^{k-1}<v\le2^k\}\big|
\leq \sum_{k=-\infty}^\infty2^{k\cdot2^*_{\alpha,\beta}}\big|\{v\ge2^{k-1}\}\big|\\
&=\sum_{k=-\infty}^\infty2^{k\cdot2^*_{\alpha,\beta}}
\big|\{v_{2^{k-2}}^{2^{k-1}}\ge2^{k-2}\}\big|
\le2^{3\cdot2^*_{\alpha,\beta}+1}C_1
\sum_{k=-\infty}^\infty\Big(\int_{V_R}|\nabla_{\mathcal
L}v_{2^{k-2}}^{2^{k-1}}|^2\,dz\Big)^{2^*_{\alpha,\beta}/2}\\
&\le2^{3\cdot2^*_{\alpha,\beta}+1}C_1
\Big(\sum_{k=-\infty}^\infty\int_{V_R}|\nabla_{\mathcal
L}v_{2^{k-2}}^{2^{k-1}}|^2\,dz\Big)^{2^*_{\alpha,\beta}/2}\\
&\le2^{3\cdot2^*_{\alpha,\beta}+1}C_1
\big\|\nabla_{\mathcal L}v\big\|_{L^2(V_R)}^{2^*_{\alpha,\beta}}
\le2^{3\cdot2^*_{\alpha,\beta}+1}C_1
\big\|\nabla_{\mathcal L}u\big\|_{L^2(V_R)}^{2^*_{\alpha,\beta}},
\end{align*}
where the third inequality uses $2^*_{\alpha,\beta}/2\ge1$ (recall $Q>2\alpha$) together with the superadditivity of $s\mapsto s^{2^*_{\alpha,\beta}/2}$ on nonnegative summable families, the fourth is Corollary~\ref{cor:truncation-D} applied on $\Omega=V_R$ to the reindexed dyadic ladder, and the last holds  {by \eqref{eq:lip-stable-density}}.

Taking $v=v_+$ and $v=v_-$ separately and summing,
\[
\int_{ \mathcal{C}_R}|u-b|^{2^*_{\alpha,\beta}}\,dz
=\int_{ \mathcal{C}_R}v_+^{2^*_{\alpha,\beta}}\,dz
+\int_{ \mathcal{C}_R}v_-^{2^*_{\alpha,\beta}}\,dz
\le2^{3\cdot2^*_{\alpha,\beta}+2}C_1\big\|\nabla_{\mathcal
L}u\big\|_{L^2(V_R)}^{2^*_{\alpha,\beta}},
\]
and taking $1/2^*_{\alpha,\beta}$ powers gives~\eqref{eq:Sobolev_Grushin_cylinder} with $C_2=8(4C_1)^{1/2^*_{\alpha,\beta}}$.
\end{proof}

\begin{lemma}
\label{lem:exhaustion} For $u\in C_c^\infty(\R^{n+m})$, the map $R\mapsto\int_{\mathcal{C}_R}|\nabla_{\mathcal L}u|^2\,dz$ is nondecreasing on $(0,\infty)$, and
\begin{equation*}
\lim_{R\to\infty}\int_{\mathcal{C}_R}|\nabla_{\mathcal L}u|^2\,dz
=\int_{\R^{n+m}}|\nabla_{\mathcal L}u|^2\,dz\ = \,\mathcal
E_{\alpha,\beta}(u)<+\infty.
\end{equation*}
\end{lemma}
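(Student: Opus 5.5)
The plan is to show that the integrand is nonnegative and integrable, so that both claims reduce to monotone convergence. By \eqref{eq:grushin-gradient}, $|\nabla_{\mathcal L}u|^2=D_{x,\alpha}u+(1+\beta)^2|x|^{2\beta}|\nabla_yu|^2\ge0$ pointwise, with $D_{x,\alpha}u$ the unrestricted density \eqref{eq:density}. The cylinders are nested: for $R<R'$ we have $B_R^n\subset B_{R'}^n$, and since $t\mapsto t^{\alpha+\beta}$ is increasing, also $B^m_{R^{\alpha+\beta}}\subset B^m_{R'^{\alpha+\beta}}$. Hence $\mathcal C_R\subset\mathcal C_{R'}$, and the map $R\mapsto\int_{\mathcal C_R}|\nabla_{\mathcal L}u|^2\,dz$ is nondecreasing.

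Next, the cylinders exhaust $\R^{n+m}$. Given $(x,y)$, take $R>\max\{|x|,|y|^{1/(\alpha+\beta)}\}$; then $(x,y)\in\mathcal C_R$. So $\chi_{\mathcal C_R}\uparrow 1$ pointwise along any sequence $R_j\uparrow\infty$. The monotone convergence theorem then gives
\[
\lim_{R\to\infty}\int_{\mathcal C_R}|\nabla_{\mathcal L}u|^2\,dz=\int_{\R^{n+m}}|\nabla_{\mathcal L}u|^2\,dz .
\]
Monotonicity in $R$ turns the sequential limit into the full limit. By Tonelli and \eqref{eq:density}, the right-hand side equals $[u]_{x,\alpha}^2+(1+\beta)^2\int|x|^{2\beta}|\nabla_yu|^2\,dz=\mathcal E_{\alpha,\beta}(u)$, as in \eqref{eq:energy}.

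The only step with content is finiteness of $\mathcal E_{\alpha,\beta}(u)$ for $u\in C_c^\infty$. Choose $\operatorname{supp}u\subset B^n_\rho\times B^m_\rho$.

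\textbf{The weighted term.} The integrand $|x|^{2\beta}|\nabla_yu|^2$ is bounded by $\rho^{2\beta}\|\nabla_yu\|_\infty^2$ and has compact support, so its integral is finite.

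\textbf{The nonlocal term.} For each $y$, I split the $\xi$-integral into $|x-\xi|<1$ and $|x-\xi|\ge1$.
\begin{itemize}
\item On the near region, use $|u(x,y)-u(\xi,y)|\le\|\nabla_xu\|_\infty|x-\xi|$. This gives a bound $C\int_{|h|<1}|h|^{2-n-2\alpha}\,dh<\infty$, which needs $\alpha<1$.
\item On the far region, use $|u(x,y)-u(\xi,y)|^2\le2|u(x,y)|^2+2|u(\xi,y)|^2$. By symmetry of the kernel, this gives a bound $C\|u(\cdot,y)\|_{L^2(\R^n)}^2\int_{|h|\ge1}|h|^{-n-2\alpha}\,dh$, which needs $\alpha>0$.
\end{itemize}
Both contributions vanish unless $|y|<\rho$. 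When the near-region integrand is nonzero, at least one of $x,\xi$ lies in $B_\rho^n$, so $|x|<\rho+1$. Integrating over this bounded set of $(x,y)$ then yields $[u]_{x,\alpha}^2<\infty$.

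I expect no real obstacle here. The only point needing care is this finiteness check, and it uses exactly the constraint $0<\alpha<1$, as in the cutoff estimate for $K_R$ in the density proposition.
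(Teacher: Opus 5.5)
Your proof is correct: nonnegativity of the unrestricted density $D_{x,\alpha}u$, together with the nestedness and exhaustion of the cylinders $\mathcal C_R$, gives monotonicity and the limit by monotone convergence. Your splitting of the $\xi$-integral at $|x-\xi|=1$ correctly establishes $\mathcal E_{\alpha,\beta}(u)<\infty$: it uses $\alpha<1$ near the diagonal and $\alpha>0$ away from it, and confines the near-region contribution to $B^n_{\rho+1}\times B^m_\rho$. The paper states this lemma without proof, and your argument is the standard justification it leaves implicit; it is the same kernel estimate the paper uses for $K_R$ in the density proposition and in the cut-off lemma.
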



\begin{lemma}[Vanishing of the optimal centering constant]
\label{lem:vanishing-constant} Let $u\in C_c^\infty(\R^{n+m})$ and let $R_0>0$ be such that $\mathrm{supp}(u)\subset\mathcal{C}_{R_0}$. For $R\ge R_0$ define
\[
g_R(c):=\int_{\mathcal{C}_R}|u-c|^{2^*_{\alpha,\beta}}\,dz,\qquad c\in\R.
\]
Then:
\begin{enumerate}[label=(\roman*)]
\item for every $R\ge R_0$, $g_R$ attains its infimum over $\R$ at some point $c_R\in\R$;
\item $g_R(c_R)\le M:=\int_{\R^{n+m}}|u|^{2^*_{\alpha,\beta}}\,dz$ for every $R\ge R_0$;
\item $c_R\to0$ as $R\to\infty$;
\item $\displaystyle\lim_{R\to\infty}\int_{\mathcal{C}_{R_0}}|u-c_R|^{2^*_{\alpha,\beta}}\,dz=M$.
\end{enumerate}
\end{lemma}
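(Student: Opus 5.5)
Write $p:=2^*_{\alpha,\beta}>2$ and $\mathcal C:=\mathcal C_{R_0}$. The plan is to use only elementary real analysis on $g_R$, exploiting that $u$ vanishes on $\mathcal C_R\setminus\mathcal C_{R_0}$, a set whose measure $|\mathcal C_R|-|\mathcal C_{R_0}|$ grows like $R^{Q_{\alpha,\beta}}$.

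\emph{(i)} Since $u$ is bounded and $\mathcal C_R$ has finite measure, $g_R$ is finite and continuous in $c$ by dominated convergence (the integrand is dominated by $2^{p-1}(|u|^p+|c|^p)$ for $c$ in a bounded set). It is also coercive: by Minkowski's inequality,
\[
g_R(c)^{1/p}\ge |c|\,|\mathcal C_R|^{1/p}-\|u\|_{L^p}\to\infty
\quad\text{as }|c|\to\infty .
\]
Hence a minimizer $c_R$ exists. One could add that $g_R$ is strictly convex, so $c_R$ is unique, but uniqueness is not needed.

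\emph{(ii)} Since $\operatorname{supp}u\subset\mathcal C_{R_0}\subseteq\mathcal C_R$, we have $g_R(c_R)\le g_R(0)=\int_{\mathcal C_R}|u|^p\,dz=M$.

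\emph{(iii)} This is the only step with content. On the annulus $\mathcal C_R\setminus\mathcal C_{R_0}$ we have $u=0$, so
\[
|c_R|^p\big(|\mathcal C_R|-|\mathcal C_{R_0}|\big)\le g_R(c_R)\le M .
\]
Since $|\mathcal C_R|=\varpi_n|B_1^m|\,R^{Q_{\alpha,\beta}}\to\infty$, this gives $|c_R|^p\le M/(|\mathcal C_R|-|\mathcal C_{R_0}|)\to0$. The decay is at the explicit rate $|c_R|\lesssim R^{-Q_{\alpha,\beta}/p}$, although only convergence to $0$ is needed.

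\emph{(iv)} Since $|u-c_R|^p\to|u|^p$ pointwise and $|u-c_R|^p\le 2^{p-1}(\|u\|_\infty^p+\sup_{R\ge R_0}|c_R|^p)$ on the bounded set $\mathcal C_{R_0}$, dominated convergence gives
\[
\int_{\mathcal C_{R_0}}|u-c_R|^p\,dz\to\int_{\mathcal C_{R_0}}|u|^p\,dz=M .
\]
The supremum of $|c_R|$ is finite by the bound in (iii). Alternatively, the Minkowski estimate
\[
\Big|\,\|u-c_R\|_{L^p(\mathcal C_{R_0})}-\|u\|_{L^p(\mathcal C_{R_0})}\Big|\le |c_R|\,|\mathcal C_{R_0}|^{1/p}
\]
gives the same conclusion directly.

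The main (mild) obstacle is (iii). Its key point is to notice that the zero set of $u$ inside $\mathcal C_R$ has measure tending to infinity, which forces the optimal constant to vanish. The rest is bookkeeping. This lemma will then be combined with Theorem~\ref{thm:sobolev-cylinders} and Lemma~\ref{lem:exhaustion}, letting $R\to\infty$, to obtain Theorem~\ref{thm:global}.
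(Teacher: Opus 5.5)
Your proof is correct and follows the paper's argument essentially step for step. The steps match: coercivity from the reverse Minkowski inequality in (i), the competitor $c=0$ in (ii), the annulus estimate $|c_R|^p\big(|\mathcal C_R|-|\mathcal C_{R_0}|\big)\le g_R(c_R)\le M$ in (iii), and continuity of $c\mapsto\int_{\mathcal C_{R_0}}|u-c|^p\,dz$ at $c=0$ in (iv). One small imprecision: the bound from (iii) blows up as $R\downarrow R_0$, so $\sup_{R\ge R_0}|c_R|$ is not finite ``by (iii)''. Either take the supremum over $R\ge 2R_0$ (only large $R$ matter), or use $|c_R|\le 2M^{1/p}|\mathcal C_{R_0}|^{-1/p}$ from your coercivity estimate, or rely on your Minkowski alternative, which avoids the issue entirely.
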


\begin{proof}
If $u\equiv0$ the statement is trivial (with $c_R\equiv0$, $M=0$), so assume $u\not\equiv0$, and in particular $M\in(0,\infty)$ (finite because $u$ is bounded with compact support).

\smallskip\noindent Fix $R\ge R_0$. The function $c\mapsto|u(z)-c|^{2^*_{\alpha,\beta}}$ is convex for each fixed $z$ (as $2^*_{\alpha,\beta}>1$), so $g_R$ is convex, hence continuous, on $\R$. Moreover $g_R$ is coercive: by the reverse triangle inequality in $L^{2^*_{\alpha,\beta}}(\mathcal{C}_R)$,
\begin{align*}
g_R(c)^{1/2^*_{\alpha,\beta}}& =\|u-c\|_{L^{2^*_{\alpha,\beta}}(\mathcal{C}_R)} \\
& \ge\big|\,\|c\|_{L^{2^*_{\alpha,\beta}}(\mathcal{C}_R)}-\|u\|_{L^{2^*_{\alpha,\beta}}(\mathcal{C}_R)}\,\big| \\
& =\big|\,|c|\,|\mathcal{C}_R|^{1/2^*_{\alpha,\beta}}-M^{1/2^*_{\alpha,\beta}}\,\big|\xrightarrow[|c|\to\infty]{}\infty,
\end{align*}
using $\|u\|_{L^{2^*_{\alpha,\beta}}(\mathcal{C}_R)}=\|u\|_{L^{2^*_{\alpha,\beta}}(\R^{n+m})}=M^{1/2^*_{\alpha,\beta}}$ (valid since $u\equiv0$ outside $\mathcal{C}_{R_0}\subset\mathcal{C}_R$) and $|\mathcal{C}_R|<\infty$. A continuous coercive function on $\R$ attains its infimum, proving (i).

\smallskip\noindent\emph{(ii)} Since $c=0$ is an admissible competitor and $u\equiv0$ on $\mathcal{C}_R\setminus\mathcal{C}_{R_0}$,
\[
g_R(c_R)=\min_{c\in\R}g_R(c)\le g_R(0)=\int_{\mathcal{C}_R}|u|^{2^*_{\alpha,\beta}}\,dz
=\int_{\mathcal{C}_{R_0}}|u|^{2^*_{\alpha,\beta}}\,dz=\int_{\R^{n+m}}|u|^{2^*_{\alpha,\beta}}\,dz=M.
\]

\smallskip\noindent\emph{(iii)} Since $u\equiv0$ on $\mathcal{C}_R\setminus\mathcal{C}_{R_0}$ for $R\ge R_0$,
\[
g_R(c_R)=\int_{\mathcal{C}_{R_0}}|u-c_R|^{2^*_{\alpha,\beta}}\,dz+|c_R|^{2^*_{\alpha,\beta}}\big(|\mathcal{C}_R|-|\mathcal{C}_{R_0}|\big)
\ \ge\ |c_R|^{2^*_{\alpha,\beta}}\big(|\mathcal{C}_R|-|\mathcal{C}_{R_0}|\big).
\]
Combined with (ii), this gives
\begin{equation}
\label{eq:cR-bound}
|c_R|^{2^*_{\alpha,\beta}}\le\frac{M}{|\mathcal{C}_R|-|\mathcal{C}_{R_0}|}.
\end{equation}
Since, $|\mathcal{C}_R|-|\mathcal{C}_{R_0}|=C(n,m)(R^Q-R_0^Q)\to\infty$ as $R\to\infty$, so the right-hand side of \eqref{eq:cR-bound} tends to $0$, whence $c_R\to0$.

\smallskip\noindent\emph{(iv)} Consider the auxiliary function
\[
h(c):=\int_{\mathcal{C}_{R_0}}|u-c|^{2^*_{\alpha,\beta}}\,dz,\qquad c\in\R.
\]
Since $u$ is bounded on $\mathcal{C}_{R_0}$ (say $|u|\le M_0$ there) and $|\mathcal{C}_{R_0}|<\infty$, dominated convergence shows $h$ is continuous at $c=0$: for $|c|\le1$, $|u-c|^{2^*_{\alpha,\beta}}\le(M_0+1)^{2^*_{\alpha,\beta}}\mathbf1_{\mathcal{C}_{R_0}}\in L^1(\mathcal{C}_{R_0})$, and $|u(z)-c|^{2^*_{\alpha,\beta}}\to|u(z)|^{2^*_{\alpha,\beta}}$ pointwise as $c\to0$, so by dominated convergence $h(c)\to h(0)=\int_{\mathcal{C}_{R_0}}|u|^{2^*_{\alpha,\beta}}\,dz=M$ as $c\to0$. Since $c_R\to0$ by (iii), continuity of $h$ at $0$ gives
\[
\lim_{R\to\infty}\int_{\mathcal{C}_{R_0}}|u-c_R|^{2^*_{\alpha,\beta}}\,dz=\lim_{R\to\infty}h(c_R)=h(0)=M.\qedhere
\]
\end{proof}

\begin{proof}[Proof of Theorem \ref{thm:global}]
Fix $u\in C_c^\infty(\R^{n+m})$; if $u\equiv0$ there is nothing to prove, so assume $u\not\equiv0$. Let $R_0>0$ be such that $\mathrm{supp}(u)\subset\mathcal{C}_{R_0}$, and let $M:=\int_{\R^{n+m}}|u|^{2^*_{\alpha,\beta}}\,dz\in(0,\infty)$ as in Lemma~\ref{lem:vanishing-constant}. With the notation $g_R(c)=\int_{\mathcal{C}_R}|u-c|^{2^*_{\alpha,\beta}}\,dz$ of Lemma~\ref{lem:vanishing-constant} and $c_R$ a minimizer of $g_R$, inequality~\eqref{eq:Sobolev_Grushin_cylinder} reads
\begin{equation}
\label{eq:step1}
g_R(c_R)^{1/2^*_{\alpha,\beta}}=\inf_{c\in\R}\left(\int_{\mathcal{C}_R}|u-c|^{2^*_{\alpha,\beta}}\,dz\right)^{1/2^*_{\alpha,\beta}}
\le C_2\left(\int_{V_R}|\nabla_{\mathcal L}u|^2\,dz\right)^{1/2},
\qquad R>0.
\end{equation}
For every $R\ge R_0$, since $u\equiv0$ on $\mathcal{C}_R\setminus\mathcal{C}_{R_0}$,
\[
g_R(c_R)=\int_{\mathcal{C}_{R_0}}|u-c_R|^{2^*_{\alpha,\beta}}\,dz+|c_R|^{2^*_{\alpha,\beta}}\big(|\mathcal{C}_R|-|\mathcal{C}_{R_0}|\big)
\ \ge\ \int_{\mathcal{C}_{R_0}}|u-c_R|^{2^*_{\alpha,\beta}}\,dz,
\]
and by part (iv) of Lemma~\ref{lem:vanishing-constant}, the right-hand side converges to $\int_{\R^{n+m}}|u|^{2^*_{\alpha,\beta}}\,dz$ as $R\to\infty$. Hence by Lemma~\ref{lem:exhaustion} and equation \eqref{eq:step1}, as $R\to\infty$, we obtain
\[
\left( \int_{\R^{n+m}}|u|^{2^*_{\alpha,\beta}}\,dz\ \right)^{1/2_{\alpha,\beta}^*} \le\ C_2 \left(\int_{\rnn}|\nabla_{\mathcal L}u|^2\,dz\right)^{1/2}.
\]
By a standard extension argument, we can extend the above inequality to $\mathcal D^{\alpha,2}_\beta(\R^{n+m})$. This completes the proof.
\end{proof}

We end this section by proving a Rellich-Kondrachov type compact embedding in our setup for the space $\mathcal D^{\alpha,2}_\beta(\R^{n+m})$.

\begin{proof}[Proof of Theorem \ref{lem:rellich}]
Write $Q:=Q_{\alpha,\beta}$ and $\Lambda:=\sup_k\mathcal E_{\alpha,\beta}(u_k)^{1/2}<\infty$. To prove the theorem, we use the Riesz-Kolmogorov criterion given in Theorem $4.26$ of \cite{brezis_book}.

We begin with the uniform bound on the set $\mathcal A$. Theorem~\ref{thm:global} give $\|u_k\|_{L^{2^*_{\alpha,\beta}}(\R^{n+m})}\le C\Lambda$ for every $k$, so that, by H\"older's inequality and $\tfrac12-\tfrac1{2^*_{\alpha,\beta}}=\tfrac{\alpha}{Q}$,
\begin{equation}
\label{eq:rk-L2bound}
\|u_k\|_{L^2(\mathcal A)}\le\|u_k\|_{L^{2^*_{\alpha,\beta}}(\R^{n+m})}\,|\mathcal A|^{\alpha/Q}
\le C(\mathcal A,\Lambda).
\end{equation}
Now, note that any general translation on $\R^{n+m}$ can be factored as $\tau_{(h,\zeta)}=\tau^x_h\circ\tau^y_\zeta$ where $\tau^x_h$ and $\tau^y_\zeta$ are translations in $\R^n$ and $\R^m$ respectively. So, it enough to prove uniform convergence of the translations separately in $x$ and $y$ variable. Moreover, since the space $C_c^\infty(\R^{n+m})$ is dense in $\mathcal D^{\alpha,2}_\beta(\R^{n+m})$ with respect to $\mathcal E_{\alpha,\beta}(\cdot)^{1/2}$, it is enough to prove the translation estimates for $C_c^\infty(\R^{n+m})$.

Let $v\in C_c^\infty(\R^{n+m})$. First we prove the continuity of translations in the $x$-variable. To this end, write $\tau^x_hv(x,y):=v(x-h,y)$ and take the Fourier transform in $x$ and applying Plancherel for a.e.\ fixed $y$, together with the identification of Gagliardo seminorm in Fourier terms, we get
\begin{equation*}
\begin{aligned}
\|\tau^x_hv-v\|_{L^2(\R^{n+m})}^2
& =\int_{\R^m}\int_{\R^n}\big|e^{-i\xi\cdot h}-1\big|^2|\widehat v(\xi,y)|^2
\,d\xi\,dy \\
& \leq  \int_{\R^m}\int_{\R^n}\min\{4,|\xi|^2 |h|^2\}|\widehat v(\xi,y)|^2
\,d\xi\,dy \\
& \leq  \int_{\R^m}\int_{\R^n} 4^{1-\alpha}|h|^{2\alpha}|\xi|^{2\alpha}|\widehat v(\xi,y)|^2 \,d\xi\,dy \\
& \leq C(n,\alpha)\,|h|^{2\alpha} [v]_{x,\alpha}^2.
\end{aligned}
\end{equation*}
To estimate the translations in the $y$-variable we split at a level $\sigma\in(0,1]$. The set $\{|x|\ge\sigma\}$ is invariant under $y$-translations and carries $|x|^{-\beta}\le\sigma^{-\beta}$, so with $\tau^y_\zeta v(x,y):=v(x,y-\zeta)$, we obtain
\[
\|\tau^y_\zeta v-v\|_{L^2(\{|x|\ge\sigma\})}
\le|\zeta|\,\|\nabla_yv\|_{L^2(\{|x|\ge\sigma\})}
\le|\zeta|\,\sigma^{-\beta}\,\big\||x|^{\beta}\nabla_yv\big\|_{L^2(\R^{n+m})} .
\]
Let $\mathcal A_1:=\{z:\operatorname{dist}(z,\mathcal A)\le1\}$; for $|\zeta|\le1$ both $v$ and $\tau^y_\zeta v$ are supported, over $\mathcal A$, in $E_\sigma:=\{|x|<\sigma\}\cap\mathcal A_1$, whose measure is at most $C(\mathcal A)\sigma^n$, so that
\[
\|\tau^y_\zeta v-v\|_{L^2(\{|x|<\sigma\}\cap\mathcal A)}
\le2\|v\|_{L^{2^*_{\alpha,\beta}}}|E_\sigma|^{\frac12-\frac1{2^*_{\alpha,\beta}}}
\le C(\mathcal A)\,\|v\|_{L^{2^*_{\alpha,\beta}}}\,\sigma^{\frac{\alpha n}{Q}} .
\]
Adding the two contributions and invoking Theorem~\ref{thm:global},
\begin{equation*}
\|\tau^y_\zeta v-v\|_{L^2(\mathcal A)}
\le C\Big(|\zeta|\,\sigma^{-\beta}+\sigma^{\frac{\alpha n}{Q}}\Big)
\mathcal E_{\alpha,\beta}(v)^{1/2},
\qquad 0<\sigma\le1,\ |\zeta|\le1,
\end{equation*}
with $C=C(n,m,\alpha,\beta,\mathcal A)$. Given $\varepsilon>0$ one now fixes $\sigma$ with $C\sigma^{\alpha n/Q}\Lambda<\varepsilon/2$ and then $|\zeta|$ with $C|\zeta|\sigma^{-\beta}\Lambda<\varepsilon/2$; the resulting threshold depends on $\varepsilon$ and $\Lambda$, not on $k$. Thus, translations in $y$ are uniformly continuous.

Therefore, by Theorem $4.26$ of \cite{brezis_book} the family $\{u_k\}$ has a subsequence converges in $L^2(\mathcal A)$, say to $u$.

For $\delta\in(2,2^*_{\alpha,\beta})$ write $\tfrac1\delta=\tfrac{1-\theta}2+\tfrac{\theta}{2^*_{\alpha,\beta}}$ with $\theta\in(0,1)$; then
\[
\|u_k-u_j\|_{L^\delta(\mathcal A)}
\le\|u_k-u_j\|_{L^2(\mathcal A)}^{1-\theta}
\|u_k-u_j\|_{L^{2^*_{\alpha,\beta}}(\R^{n+m})}^{\theta}
\le  C \Lambda^{\theta}
\|u_k-u_j\|_{L^2(\mathcal A)}^{1-\theta}\to0,
\]
so the same subsequence is Cauchy in $L^\delta(\mathcal A)$, while for $1\le\delta<2$ the claim follows from the case $\delta=2$ and H\"older's inequality on the bounded set $\mathcal A$.
\end{proof}

\begin{corollary}[Local compactness]
Let $\{u_k\}$ be bounded in $\mathcal D^{\alpha,2}_\beta(\R^{n+m})$. Then there are a subsequence (not relabelled) and $u\in\mathcal D^{\alpha,2}_\beta(\R^{n+m})$ with $u_k\rightharpoonup u$ weakly in $\mathcal D^{\alpha,2}_\beta(\R^{n+m})$ and $u_k\to u$ a.e. in $\R^{n+m}$.
\end{corollary}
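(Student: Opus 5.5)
The plan is to combine the Hilbert-space structure of $\mathcal D^{\alpha,2}_\beta(\R^{n+m})$ with Theorem~\ref{lem:rellich}, followed by a diagonal argument for the pointwise convergence.

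\emph{Weak limit.} By construction $\mathcal D^{\alpha,2}_\beta(\R^{n+m})$ is the completion of $C_c^\infty(\R^{n+m})$ under the norm $\mathcal E_{\alpha,\beta}(\cdot)^{1/2}$. This norm comes from the inner product $\mathcal B_{\alpha,\beta}$ of \eqref{eq:bilinear}, so the space is a Hilbert space. By Theorem~\ref{thm:global}, every Cauchy sequence in the energy norm is also Cauchy in $L^{2^*_{\alpha,\beta}}$. Hence elements of the completion are genuine functions in $L^{2^*_{\alpha,\beta}}(\R^{n+m})$, and the embedding $\mathcal D^{\alpha,2}_\beta\hookrightarrow L^{2^*_{\alpha,\beta}}$ is continuous and injective. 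Boundedness of $\{u_k\}$ and reflexivity then give a subsequence with $u_k\rightharpoonup u$ in $\mathcal D^{\alpha,2}_\beta$. Composing with the continuous embedding, we also get $u_k\rightharpoonup u$ weakly in $L^{2^*_{\alpha,\beta}}(\R^{n+m})$.

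\emph{Local strong convergence and diagonalization.} Take the cylinders $\mathcal C_j$, $j\in\mathbb N$, which exhaust $\R^{n+m}$. Applying Theorem~\ref{lem:rellich} with $\mathcal A=\mathcal C_1$ and $\delta=2$ yields a further subsequence converging in $L^2(\mathcal C_1)$. Since strong $L^2$ convergence implies a.e.\ convergence along a subsequence, we may pass to one converging a.e.\ on $\mathcal C_1$. We repeat this on $\mathcal C_2,\mathcal C_3,\dots$, each time extracting from the previous subsequence. The diagonal subsequence then converges in $L^2(\mathcal C_j)$ for every $j$ and a.e.\ in $\R^{n+m}$, to some function $\tilde u$.

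\emph{Identification of the limits.} This is the only point requiring care: we must show $\tilde u=u$ a.e. Fix $j$ and $\varphi\in L^\infty(\mathcal C_j)$, extended by zero, so that $\varphi\in L^{(2^*_{\alpha,\beta})'}(\R^{n+m})$. Weak convergence in $L^{2^*_{\alpha,\beta}}$ gives $\int u_k\varphi\to\int u\varphi$. Strong $L^2(\mathcal C_j)$ convergence gives $\int u_k\varphi\to\int\tilde u\varphi$. Therefore $\int(u-\tilde u)\varphi=0$ for all such $\varphi$, so $u=\tilde u$ a.e.\ on $\mathcal C_j$, and hence a.e.\ on $\R^{n+m}$. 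The diagonal subsequence thus satisfies both $u_k\rightharpoonup u$ in $\mathcal D^{\alpha,2}_\beta(\R^{n+m})$ and $u_k\to u$ a.e.
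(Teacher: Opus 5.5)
Your proposal is correct and follows essentially the same route as the paper: weak compactness in the Hilbert space $\mathcal D^{\alpha,2}_\beta(\R^{n+m})$, then Theorem~\ref{lem:rellich} with $\delta=2$ on the exhausting cylinders $\mathcal C_j$, extraction of a.e.\ convergent subsequences, and a diagonal argument. Your identification of the two limits, via the weak convergence in $L^{2^*_{\alpha,\beta}}$ inherited through Theorem~\ref{thm:global}, is simply an explicit version of the paper's appeal to uniqueness of distributional limits.
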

\begin{proof}
Weak subsequential convergence is immediate, $\mathcal D^{\alpha,2}_\beta(\R^{n+m})$ being a Hilbert space. Applying Theorem~\ref{lem:rellich} with $\delta=2$ on the cylinder $\mathcal C_j$ produces an $L^2(\mathcal C_j)$-convergent subsequence, hence, by \cite[Theorem 4.9]{brezis_book}, one converging a.e.\ on $\mathcal C_j$; a diagonal extraction over $j=1,2,\dots$ together with $\mathcal C_j\nearrow\R^{n+m}$ yields a single subsequence converging a.e.\ on $\R^{n+m}$, whose pointwise limit agrees with $u$ by uniqueness of distributional limits.
\end{proof}


\section{Concentration-Compactness Principle and Sobolev extremals}
\label{sec:optimal}
The embedding $\mathcal D^{\alpha,2}_\beta(\R^{n+m})\hookrightarrow L^{2^*_{\alpha,\beta}}(\R^{n+m})$ is continuous but not compact, since the Sobolev quotient is invariant under the noncompact group \eqref{scaling} below. In this section we prove Theorems~\ref{lem:ccp-mixed-fractional} and~\ref{thm:existence-minimizer-alphabeta}. \\

We recall $\mathcal D^{\alpha,2}_\beta(\R^{n+m})$ as the completion of $C_c^\infty(\R^{n+m})$ with respect to the norm
\[
\|u\|_{\mathcal D^{\alpha,2}_\beta}
:=\Big([u]_{x,\alpha}^2
+(1+\beta)^2\int_{\R^{n+m}}|x|^{2\beta}|\nabla_yu|^2\,dx\,dy\Big)^{1/2}.
\]
For brevity we write
\[
\mathcal E_{\alpha,\beta}(u):=[u]_{x,\alpha}^2
+(1+\beta)^2\int_{\R^{n+m}}|x|^{2\beta}|\nabla_yu|^2\,dx\,dy.
\]
By Theorem~\ref{thm:global}, there is a constant $C>0$ such that
\begin{equation}
\label{optimal}
\left(\int_{\R^{n+m}}|u|^{2^*_{\alpha,\beta}}\,dx\,dy\right)^{2/2^*_{\alpha,\beta}}
\le C\,\mathcal E_{\alpha,\beta}(u),
\qquad u\in C_c^\infty(\R^{n+m}).
\end{equation}
The optimal Sobolev constant in~\eqref{optimal} is
\[
S_{\alpha,\beta}(n,m):=\inf_{\substack{u\in\mathcal
D^{\alpha,2}_\beta(\R^{n+m})\\u\ne0}}
\frac{\mathcal E_{\alpha,\beta}(u)}
{\big(\int_{\R^{n+m}}|u|^{2^*_{\alpha,\beta}}\,dx\,dy\big)^{2/2^*_{\alpha,\beta}}}
=\inf_{\substack{u\in\mathcal D^{\alpha,2}_\beta(\R^{n+m})\\
\|u\|_{L^{2^*_{\alpha,\beta}}}=1}}\mathcal E_{\alpha,\beta}(u).
\]
We show that, for $\beta>0$ and all $n,m\ge1$, this infimum is attained.

The quotient defining $S_{\alpha,\beta}(n,m)$ is invariant under the translation-dilation group
\begin{equation}
\label{scaling}
u_{\rho,\eta}(x,y):=\rho^{\frac{Q_{\alpha,\beta}-2\alpha}2}u(\rho
x,\rho^{\alpha+\beta}y+\eta),
\qquad\rho>0,\ \eta\in\R^m,
\end{equation}
in the sense that $\|u_{\rho,\eta}\|_{L^{2^*_{\alpha,\beta}}(\R^{n+m})} =\|u\|_{L^{2^*_{\alpha,\beta}}(\R^{n+m})}$ and $\mathcal E_{\alpha,\beta}(u_{\rho,\eta})=\mathcal E_{\alpha,\beta}(u)$. Notice that the quotient is \emph{not} invariant under translations in the $x$-variable for $\beta > 0$, because of the presence of the weight $|x|^{2\beta}$.

We first recall some standard notation from measure theory. Let \(\Omega\subset\mathbb{R}^{n+m}\) be an open set. We denote by \(C_0(\Omega)\) the space of continuous functions \(\varphi:\Omega\to\mathbb{R}\) that vanish at infinity, endowed with the supremum norm
\[
\|\varphi\|_{\infty}:=\sup_{z\in\Omega}|\varphi(z)|.
\]
 We denote by
\[
\mathcal{M}(\Omega):=C_0(\Omega)^*
\]
the space of finite signed measures on \(\Omega\). For \(\mu\in\mathcal{M}(\Omega)\), its total variation norm is defined by
\[
\|\mu\|_{\mathcal{M}(\Omega)}
:=|\mu|(\Omega)
=
\sup\left\{
\left|\int_{\Omega}\varphi\,d\mu\right|:
\varphi\in C_0(\Omega),\
\|\varphi\|_{\infty}\leq1
\right\}.
\]
In particular, if \(\mu\) is nonnegative, then \( \|\mu\|_{\mathcal{M}(\Omega)}=\mu(\Omega). \) For a sequence \(\{\mu_k\}\subset\mathcal{M}(\Omega)\), we write \( \mu_k\stackrel{*}{\rightharpoonup}\mu ~~\text{in }\mathcal{M}(\Omega) \) if
\[
\int_{\Omega}\varphi\,d\mu_k
\longrightarrow
\int_{\Omega}\varphi\,d\mu
\qquad
\text{for every }\varphi\in C_0(\Omega).
\]
For measurable $w:\R^{n+m}\to\R$, we also recall the fractional $x$-energy density
\[
D_{x,\alpha}w(x,y):=\frac{1}2\int_{\R^n}
\frac{|w(x,y)-w(\xi,y)|^2}{|x-\xi|^{n+2\alpha}}\,d\xi,
\quad\text{so that}\quad
[w]_{x,\alpha}^2=\int_{\R^{n+m}}D_{x,\alpha}w(x,y)\,dx\,dy.
\]

We begin with a lemma describing how the energy of a bounded sequence in $\mathcal D^{\alpha,2}_\beta(\R^{n+m})$ changes when the sequence is multiplied by a cutoff function.

\begin{lemma}[Cut-off estimate]
\label{lem:ccp-cutoff} Let $\{v_k\}\subset\mathcal D^{\alpha,2}_\beta(\R^{n+m})$ be bounded with $v_k\to0$ a.e.\ in $\R^{n+m}$, and set
\[
d\mu_k:=\Big[D_{x,\alpha}v_k+(1+\beta)^2|x|^{2\beta}|\nabla_yv_k|^2\Big]dx\,dy .
\]
Let $\psi\in C^\infty(\R^{n+m})$ satisfy $0\le\psi\le1$, let $\nabla\psi$ be bounded and supported in a compact set $\mathcal K\subset\{|x|\le R_{x}\}\times\{|y|\le R_{y}\}$ for some $R_{x}, R_{y} >0$, and assume that $\psi(\cdot,y)$ is constant on $\{|x|>R_{x}\}$ for every $y\in\R^m$. Then for every $\varepsilon>0$ there is $C_\varepsilon=C_\varepsilon(\beta)>0$, independent of $k$, such that
\begin{equation}
\label{eq:ccp-cutoff}
\mathcal E_{\alpha,\beta}(\psi v_k)
\le(1+\varepsilon)\int_{\R^{n+m}}\psi^2\,d\mu_k+C_\varepsilon\,\mathcal{R}_k(\psi),
\qquad
\lim_{k\to\infty}\mathcal{R}_k(\psi)=0,
\end{equation}
where, with $K_\psi( \xi,y):=\int_{\R^n}\frac{|\psi(x,y)-\psi(\xi,y)|^2}{|x-\xi|^{n+2\alpha}}\,dx$,
\[
\mathcal{R}_k(\psi):=\int_{\R^{n+m}}|x|^{2\beta}|\nabla_y\psi|^2|v_k|^2\,dx\,dy
+\int_{\R^{n+m}}|v_k(\xi, y)|^2K_\psi(\xi,y)\,d\xi\,dy .
\]
\end{lemma}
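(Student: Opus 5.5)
Expand $\mathcal E_{\alpha,\beta}(\psi v_k)$ directly with a product rule in each part, use Young's inequality to split off the main term $\int\psi^2\,d\mu_k$, and collect the commutator terms into $\mathcal R_k(\psi)$. Then show $\mathcal R_k(\psi)\to0$ by the local compactness of Theorem~\ref{lem:rellich}, together with a separate tail estimate for the nonlocal kernel $K_\psi$.

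\emph{Local part.} Since $\nabla_y(\psi v_k)=\psi\nabla_yv_k+v_k\nabla_y\psi$ slice-wise, Young's inequality $|a+b|^2\le(1+\varepsilon)|a|^2+(1+\varepsilon^{-1})|b|^2$ gives
\[
(1+\beta)^2\!\int|x|^{2\beta}|\nabla_y(\psi v_k)|^2\le(1+\varepsilon)(1+\beta)^2\!\int\psi^2|x|^{2\beta}|\nabla_yv_k|^2+(1+\varepsilon^{-1})(1+\beta)^2\!\int|x|^{2\beta}|\nabla_y\psi|^2|v_k|^2 .
\]
\emph{Nonlocal part.} Write $\psi v_k(x,y)-\psi v_k(\xi,y)=\psi(x,y)\big(v_k(x,y)-v_k(\xi,y)\big)+v_k(\xi,y)\big(\psi(x,y)-\psi(\xi,y)\big)$, exactly as in the density proposition. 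Young's inequality again yields
\[
[\psi v_k]_{x,\alpha}^2\le(1+\varepsilon)\int\psi^2D_{x,\alpha}v_k+\tfrac12(1+\varepsilon^{-1})\int|v_k(\xi,y)|^2K_\psi(\xi,y)\,d\xi\,dy .
\]
Summing the two estimates gives \eqref{eq:ccp-cutoff} with $C_\varepsilon=(1+\varepsilon^{-1})(1+\beta)^2$.

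\emph{Vanishing of $\mathcal R_k(\psi)$.} The first term is supported in $\mathcal K$, where $|x|^{2\beta}|\nabla_y\psi|^2\le C$. The sequence $v_k$ is bounded in $\mathcal D^{\alpha,2}_\beta$ and $v_k\to0$ a.e., so Theorem~\ref{lem:rellich} with $\delta=2$ gives $v_k\to0$ in $L^2(\mathcal K)$ along every subsequence. Hence this term tends to $0$ along the whole sequence.

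For the second term, $\psi(\cdot,y)$ is constant on $\{|x|>R_x\}$, and since $\nabla\psi$ is supported in $\mathcal K$, $\psi(\cdot,y)$ is constant in $x$ whenever $|y|>R_y$. So $K_\psi(\xi,y)=0$ for $|y|>R_y$.

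For $|y|\le R_y$:
\begin{itemize}
\item By the argument in the density proposition, $K_\psi\le C$ uniformly, using $|\psi(x,y)-\psi(\xi,y)|\le\min\{1,\|\nabla\psi\|_\infty|x-\xi|\}$.
\item For $|\xi|\ge2R_x$, the difference vanishes unless $|x|\le R_x$, so $K_\psi(\xi,y)\le C R_x^n|\xi|^{-n-2\alpha}$.
\end{itemize}

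Split the $\xi$-integral at $|\xi|=\Lambda$. On the bounded set $\{|\xi|\le\Lambda,|y|\le R_y\}$, the contribution tends to $0$ by Theorem~\ref{lem:rellich}. On the region $|\xi|>\Lambda$, apply Hölder with exponents $2^*_{\alpha,\beta}/2$ and its conjugate $Q/(2\alpha)$. This bounds the contribution by
\[
\|v_k\|^2_{L^{2^*_{\alpha,\beta}}}\Big(\int_{|y|\le R_y}\int_{|\xi|>\Lambda}|\xi|^{-(n+2\alpha)Q/(2\alpha)}\,d\xi\,dy\Big)^{2\alpha/Q}.
\]
Here $\|v_k\|_{L^{2^*_{\alpha,\beta}}}$ is uniformly bounded by Theorem~\ref{thm:global}. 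The $\xi$-integral converges because $(n+2\alpha)Q/(2\alpha)>n$, so this tail tends to $0$ as $\Lambda\to\infty$, uniformly in $k$. Letting $k\to\infty$ first and then $\Lambda\to\infty$ gives $\mathcal R_k(\psi)\to0$.

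The main obstacle is this nonlocal tail: $K_\psi$ is not compactly supported in $\xi$, so the local Rellich compactness alone does not suffice. The decay of $K_\psi$ combined with the uniform $L^{2^*_{\alpha,\beta}}$ bound is what closes the argument. A secondary point to check is that the boundedness of $|x|^{2\beta}$ on $\mathcal K$ is what makes the weighted $y$-term local.
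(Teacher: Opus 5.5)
Your proposal is correct and follows the paper's proof essentially step for step: the same Young-inequality splitting of the local and nonlocal parts, the same uniform bound and $|\xi|^{-n-2\alpha}$ decay of $K_\psi$ (with $K_\psi(\cdot,y)=0$ for $|y|>R_y$), and the same split at $|\xi|=\Lambda$, using Rellich on the bounded piece and H\"older against the uniform $L^{2^*_{\alpha,\beta}}$ bound on the tail. One phrasing fix: the Rellich step should say that every subsequence has a further subsequence converging in $L^2(\mathcal K)$, with the a.e.\ convergence identifying the limit as $0$ (Lemma~\ref{lem:vitali-upgrade} gives this directly).
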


\begin{proof}
By Young's inequality, we obtain pointwise $$|\nabla_y(\psi v_k)|^2\le(1+\varepsilon)\psi^2|\nabla_yv_k|^2+C_\varepsilon|v_k|^2|\nabla_y\psi|^2.$$ Likewise, splitting
\[
\psi(x,y)v_k(x,y)-\psi(\xi,y)v_k(\xi,y)
=\psi(x,y)\big(v_k(x,y)-v_k(\xi,y)\big)+v_k(\xi,y)\big(\psi(x,y)-\psi(\xi,y)\big)
\]
and applying Young's inequality before integrating in $(x,\xi,y)$ gives
\[
[\psi v_k]_{x,\alpha}^2
\le(1+\varepsilon)\int_{\R^{n+m}}\psi^2D_{x,\alpha}v_k\,dx\,dy
+C_\varepsilon\int_{\R^{n+m}}|v_k|^2K_\psi\,d\xi\,dy .
\]
Adding the two estimates yields first term in \eqref{eq:ccp-cutoff}.

\smallskip

Now, we show that $\mathcal{R}_k(\psi) \to 0$ as $k \to \infty$. For the first term we note that, as $\beta\ge0$, the function $|x|^{2\beta}|\nabla_y\psi|^2$ is bounded and supported in the compact set $\mathcal K$. Thus we obtain $$\int_{\R^{n+m}}|x|^{2\beta}|\nabla_y\psi|^2|v_k|^2\le C\int_{\mathcal K}|v_k|^2\to0,$$ where Theorem \ref{lem:rellich} gives $v_k\to0$ in $L^2_{\mathrm{loc}}(\R^{n+m})$. For the second term, we first estimate $K_\psi$. Since $\psi$ is smooth with bounded gradient and $0\le\psi\le1$, the mean value theorem gives
\begin{equation}\label{eq:ccp-Kdecay0}
K_\psi(\xi,y)\le\|\nabla_x\psi\|_\infty^2\int_{|h|<1}|h|^{2-n-2\alpha}dh
+\int_{|h|\ge1}|h|^{-n-2\alpha}dh=:\Gamma_0<\infty ,
\end{equation}
both integrals converging because $0<\alpha<1$. If $|y|>R_{y}$ then $\nabla\psi(\cdot,y)\equiv0$, so $\psi(\cdot,y)$ is constant and $K_\psi(\cdot,y)\equiv0$. Finally, if $|\xi|>2R_{x}$ then $\psi(x,y)-\psi(\xi,y)$ vanishes for $|x|>R_{x}$ by hypothesis, while for $|x|<R_{x}$ we have $|x-\xi|\ge|\xi|/2$ there, so
\begin{equation}
\label{eq:ccp-Kdecay}
K_\psi(\xi,y)\le\int_{|x|\le R_{x}}\frac{dx}{|x-\xi|^{n+2\alpha}}\le\frac{C(n)\,R_{x}^n}{|\xi|^{n+2\alpha}} .
\end{equation}
Now, fix $\Lambda>2R_{x}$ and split
\[
\int_{\R^{n+m}}|v_k|^2K_\psi
=\int_{\{|\xi|\le\Lambda,\,|y|\le R_{y}\}}|v_k|^2K_\psi
+\int_{\{|\xi|>\Lambda,\,|y|\le R_{y}\}}|v_k|^2K_\psi=:\mathrm{I}_k+\mathrm{II}_k .
\]
By \eqref{eq:ccp-Kdecay0} and $v_k\to0$ in $L^2_{\mathrm{loc}}(\R^{n+m})$, we deduce $$\mathrm{I}_k\le\Gamma_0\int_{\{|\xi|\le\Lambda,|y|\le R_{y}\}}|v_k|^2\to0$$ as $k\to\infty$, for each fixed $\Lambda$. \\
For $\mathrm{II}_k$ use \eqref{eq:ccp-Kdecay} and H\"older's inequality with exponents $2^*_{\alpha,\beta}/2$ and $2^*_{\alpha,\beta}/(2^*_{\alpha,\beta}-2)$; since
\[
q:=(n+2\alpha)\frac{2^*_{\alpha,\beta}}{2^*_{\alpha,\beta}-2}=(n+2\alpha)\frac{Q_{\alpha,\beta}}{2\alpha}>n,
\]
we obtain, with $C$ independent of $k$ and $\Lambda$,
\[
\mathrm{II}_k\le CR_{x}^n\|v_k\|_{L^{2^*_{\alpha,\beta}}}^2
\bigg(\int_{\{|\xi|>\Lambda,\,|y|\le R_{y}\}}|\xi|^{-q}\,d\xi\,dy\bigg)^{\frac{2^*_{\alpha,\beta}-2}{2^*_{\alpha,\beta}}}
\le C(R_{x},R_{y})\,\Lambda^{-\theta},
\]
where $\theta:=n+2\alpha-\frac{2\alpha n}{Q_{\alpha,\beta}}$ which is positive since $Q_{\alpha,\beta}>2\alpha$. Hence $\limsup_{k\to\infty}\int|v_k|^2K_\psi\le C(R_{x},R_{y})\Lambda^{-\theta}$ for every $\Lambda>2R_{x}$, and letting $\Lambda\to\infty$ gives the claim.
\end{proof}

\begin{lemma}
\label{lem:vitali-upgrade} Let $1\le r<q<\infty$ and let $\{g_k\}$ be bounded in $L^{q}(\R^{n+m})$ with $g_k\to0$ a.e. Then $g_k\to0$ in $L^{r}(\mathcal A)$ for every bounded measurable $\mathcal A\subset\R^{n+m}$.
\end{lemma}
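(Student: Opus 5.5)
This is the classical Vitali convergence argument on a set of finite measure: uniform integrability comes from the $L^q$ bound, and pointwise convergence comes from Egorov's theorem. Fix a bounded measurable $\mathcal A$ and set $\Lambda:=\sup_k\|g_k\|_{L^q(\R^{n+m})}<\infty$. Since $|\mathcal A|<\infty$, it suffices to show that for every $\varepsilon>0$ we eventually have $\int_{\mathcal A}|g_k|^r\,dz<\varepsilon$.

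\emph{Step 1: small sets carry uniformly small mass.} For any measurable $E\subset\mathcal A$, Hölder's inequality with exponents $q/r$ and $q/(q-r)$ gives
\[
\int_E|g_k|^r\,dz\le\|g_k\|_{L^q(E)}^r\,|E|^{1-\frac rq}\le\Lambda^r|E|^{1-\frac rq}.
\]
Since $r<q$, the exponent $1-r/q$ is positive. So given $\varepsilon>0$ we can pick $\eta>0$ with $\Lambda^r\eta^{1-r/q}<\varepsilon/2$, and this choice is uniform in $k$.

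\emph{Step 2: uniform convergence off a small set.} Because $g_k\to0$ a.e. on $\mathcal A$ and $|\mathcal A|<\infty$, Egorov's theorem provides a measurable $E\subset\mathcal A$ with $|E|<\eta$ such that $g_k\to0$ uniformly on $\mathcal A\setminus E$. Hence for $k$ large, $\sup_{\mathcal A\setminus E}|g_k|^r\,|\mathcal A|<\varepsilon/2$.

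\emph{Step 3: conclusion.} Splitting $\int_{\mathcal A}|g_k|^r=\int_{\mathcal A\setminus E}+\int_E$ gives a total below $\varepsilon$ for all large $k$, which proves $g_k\to0$ in $L^r(\mathcal A)$.

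There is no real obstacle here. The only point that needs care is that the strict inequality $r<q$ is what makes the Hölder exponent in Step 1 positive; this is exactly what gives equi-integrability. In the intended application, $q=2^*_{\alpha,\beta}$ and the $L^q$ bound comes from Theorem~\ref{thm:global}.
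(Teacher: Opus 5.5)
Your proof is correct and matches the paper's argument. Both use the same H\"older bound $\int_E|g_k|^r\le C|E|^{1-r/q}$ to get uniform integrability on $\mathcal A$. Where the paper then cites Vitali's convergence theorem, you simply write out its standard proof via Egorov's theorem.
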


\begin{proof}
For measurable $E\subset\mathcal A$, H\"older's inequality gives $\int_E|g_k|^{r}\le\|g_k\|_{L^{q}}^{r}|E|^{1-r/q}\le C|E|^{1-r/q}$ with $C$ independent of $k$, and $1-r/q>0$; so $\{|g_k|^{r}\}$ is uniformly integrable on $\mathcal A$, which has finite measure. Since $g_k\to0$ a.e., Vitali's convergence theorem (see e.g.\ \cite{folland_book}) applies.
\end{proof}

\begin{proof}[Proof of Theorem \ref{lem:ccp-mixed-fractional}]
Write  $d\mu_k:=\big[D_{x,\alpha}w_k+(1+\beta)^2|x|^{2\beta}|\nabla_yw_k|^2\big]dx\,dy$, so that $\mu_k(\R^{n+m})=\mathcal E_{\alpha,\beta}(w_k)$ and $\mu_k\rightharpoonup^*\mu$. The sequence $\{w_k\}$ is bounded in $\mathcal D^{\alpha,2}_\beta$ and $w_k\to0$ a.e., so Lemma~\ref{lem:ccp-cutoff} applies to it.

Let $\varphi\in C_c^\infty(\R^{n+m})$; replacing $\varphi$ by $|\varphi|$ we may assume $0\le\varphi\le1$ after normalization, and $\varphi$ satisfies the hypotheses of Lemma~\ref{lem:ccp-cutoff} with $A,B$ determined by $\operatorname{supp}\varphi$. By the definition of $S_{\alpha,\beta}(n,m)$ and \eqref{eq:ccp-cutoff},
\[
S_{\alpha,\beta}(n,m)\bigg(\int_{\R^{n+m}}|\varphi w_k|^{2^*_{\alpha,\beta}}\,dx\,dy\bigg)^{2/{2^*_{\alpha,\beta}}}
\le\mathcal E_{\alpha,\beta}(\varphi w_k)
\le(1+\varepsilon)\int_{\R^{n+m}}\varphi^2\,d\mu_k+o_k(1).
\]
Since $\varphi^2\in C_c(\R^{n+m})$ and $|\varphi|^{2^*_{\alpha,\beta}}\in C_c(\R^{n+m})$, the weak-$*$ convergences give $\int_{\R^{n+m}}\varphi^2d\mu_k\to\int_{\R^{n+m}}\varphi^2d\mu$ and $\int_{\R^{n+m}}|\varphi w_k|^{2^*_{\alpha,\beta}}\,dx\,dy\to\int_{\R^{n+m}}|\varphi|^{2^*_{\alpha,\beta}} d\nu$. Letting $k\to\infty$ and then $\varepsilon\downarrow0$,
\begin{equation}
\label{eqmeasure}
S_{\alpha,\beta}(n,m)\bigg(\int_{\R^{n+m}}|\varphi|^{2^*_{\alpha,\beta}}\,d\nu\bigg)^{2/{2^*_{\alpha,\beta}}}
\le\int_{\R^{n+m}}\varphi^2\,d\mu
\qquad\text{for all }\varphi\in C_c^\infty(\R^{n+m}).
\end{equation}
By density, \eqref{eqmeasure} extends to all $\varphi\in C_c(\R^{n+m})$.

Choose $\varphi_j\in C_c^\infty(\R^{n+m})$ with $0\le\varphi_j\le1$, $\varphi_j\equiv1$ on $B_j$ and $\varphi_j\uparrow1$ pointwise. Putting $\varphi_j$ in \eqref{eqmeasure} and letting $j\to\infty$, monotone convergence gives
\begin{equation}
\label{eqfm}
\|\nu\|^{2/{2^*_{\alpha,\beta}}}\le S_{\alpha,\beta}(n,m)^{-1}\|\mu\| .
\end{equation}

Fix $\eta\in C^\infty(\R^{n+m})$ with $0\le\eta\le1$, $\eta\equiv0$ on $B_1$ and $\eta\equiv1$ off $B_2$, and set $\eta_R:=\eta\circ\delta_{1/R}$, so that $\eta_R\equiv0$ on $B_R$ and $\eta_R\equiv1$ off $B_{2R}$. Then $\nabla\eta_R$ is supported in the compact set $\overline{B_{2R}}\setminus B_R\subset \{|x|\le2R\}\times\{|y|\le(2R)^{\alpha+\beta}\}$, and for $|y|>(2R)^{\alpha+\beta}$ one has $\eta_R(\cdot,y)\equiv1$, while for $|x|>2R$ one has $\eta_R(x,y)=1$; hence Lemma~\ref{lem:ccp-cutoff} applies to $\eta_R$ with $R_x=2R$, $R_y=(2R)^{\alpha+\beta}$. Consequently, for each fixed $R$, we have
\[
S_{\alpha,\beta}(n,m)\bigg(\int_{\R^{n+m}}|\eta_Rw_k|^{2^*_{\alpha,\beta}}\,dx\,dy\bigg)^{2/{2^*_{\alpha,\beta}}}
\le\mathcal E_{\alpha,\beta}(\eta_Rw_k)
\le(1+\varepsilon)\int_{\R^{n+m}}\eta_R^2\,d\mu_k+o_k(1).
\]
Using $\mathbf 1_{B_{2R}^c}\le\eta_R\le\mathbf 1_{B_R^c}$ on both sides and taking $\limsup_{k\to\infty}$,
\[
S_{\alpha,\beta}(n,m)\Big(\limsup_{k\to\infty}\int_{B_{2R}^c}|w_k|^{2^*_{\alpha,\beta}}\,dx\,dy\Big)^{2/{2^*_{\alpha,\beta}}}
\le(1+\varepsilon)\limsup_{k\to\infty}\mu_k\big(B_R^c\big).
\]
Letting $R\to\infty$ and then $\varepsilon\downarrow0$ yields
\begin{equation*}
\nu_\infty^{2/{2^*_{\alpha,\beta}}}\le S_{\alpha,\beta}(n,m)^{-1}\mu_\infty ,
\end{equation*}
which together with \eqref{eqfm} proves \eqref{minq}.

Since $\mathcal E_{\alpha,\beta}$ is the quadratic form of the bilinear form $\mathcal B_{\alpha,\beta}$ and $u_k=u+w_k$,
\[
\mathcal E_{\alpha,\beta}(u_k)
=\mathcal E_{\alpha,\beta}(u)+\mathcal E_{\alpha,\beta}(w_k)
+2\,\mathcal B_{\alpha,\beta}(u,w_k),
\qquad
\mathcal B_{\alpha,\beta}(u,w_k)\to0,
\]
the last limit because $w_k\rightharpoonup0$ in $\mathcal D^{\alpha,2}_\beta (\R^{n+m})$ and $\mathcal B_{\alpha,\beta}(u,\cdot)$ is a bounded linear functional there. For every $R>0$ with $\mu(\partial B_R)=0$ one has $\mu_k(B_R)\to\mu(B_R)$, whence
\[
\limsup_{k\to\infty}\mathcal E_{\alpha,\beta}(w_k)
=\limsup_{k\to\infty}\big[\mu_k(B_R)+\mu_k(B_R^c)\big]
=\mu(B_R)+\limsup_{k\to\infty}\mu_k(B_R^c).
\]
The left-hand side does not depend on $R$; letting $R\to\infty$ along such radii (all but countably many) gives $\limsup_k\mathcal E_{\alpha,\beta}(w_k)=\|\mu\|+\mu_\infty$, and \eqref{eq:E-balance} follows. Identity \eqref{eq:Lp-balance} is obtained in the same way, starting from the Brezis-Lieb lemma
\[
\int_{\R^{n+m}}|u_k|^{2^*_{\alpha,\beta}}\,dx\,dy
=\int_{\R^{n+m}}|u|^{2^*_{\alpha,\beta}}\,dx\,dy+\int_{\R^{n+m}}|w_k|^{2^*_{\alpha,\beta}}\,dx\,dy+o_k(1),
\]
which applies since $u_k\to u$ a.e.\ and $\{u_k\}$ is bounded in $L^{2^*_{\alpha,\beta}}$, and then splitting $\int_{\R^{n+m}}|w_k|^{2^*_{\alpha,\beta}}$ over $B_R$ and $B_R^c$ as above.

Assume $u=0$, $\nu\ne0$ and $S_{\alpha,\beta}(n,m)\|\nu\|^{2/{2^*_{\alpha,\beta}}}=\|\mu\|$. Taking square roots in \eqref{eqmeasure} and applying H\"older's inequality with respect to $\mu$ (exponents ${2^*_{\alpha,\beta}}/2$ and ${2^*_{\alpha,\beta}}/({2^*_{\alpha,\beta}}-2)$),
\begin{equation}
\label{eq:localized-square-root}
\begin{aligned}
S_{\alpha,\beta}(n,m)^{1/2} &\bigg(\int_{\R^{n+m}}|\varphi|^{2^*_{\alpha,\beta}}\,d\nu\bigg)^{1/{2^*_{\alpha,\beta}}}
\le\bigg(\int_{\R^{n+m}}\varphi^2\,d\mu\bigg)^{1/2} \\
&\le\|\mu\|^{\frac12-\frac{1}{2^*_{\alpha,\beta}}}\bigg(\int_{\R^{n+m}}|\varphi|^{2^*_{\alpha,\beta}}\,d\mu\bigg)^{1/{2^*_{\alpha,\beta}}},
\end{aligned}
\end{equation}
so that, raising to the power ${2^*_{\alpha,\beta}}$ and using that $\varphi\in C_c$ is arbitrary, we get $$\nu\le S_{\alpha,\beta}(n,m)^{-{2^*_{\alpha,\beta}}/2}\|\mu\|^{{2^*_{\alpha,\beta}}/2-1}\mu$$ as measures. The right-hand measure has total mass $S_{\alpha,\beta}(n,m)^{-{2^*_{\alpha,\beta}}/2}\|\mu\|^{{2^*_{\alpha,\beta}}/2}=\|\nu\|$ by the equality assumption, so the nonnegative measure $S_{\alpha,\beta}(n,m)^{-{2^*_{\alpha,\beta}}/2}\|\mu\|^{{2^*_{\alpha,\beta}}/2-1}\mu-\nu$ has zero mass and hence vanishes:
\[
\mu=S_{\alpha,\beta}(n,m)\|\nu\|^{\frac{2}{2^*_{\alpha,\beta}}-1}\nu .
\]
Substituting this into the first inequality in \eqref{eq:localized-square-root} gives
\begin{equation}
\label{eq:nu-localized-final}
\|\nu\|^{\frac12-\frac{1}{2^*_{\alpha,\beta}}}\bigg(\int_{\R^{n+m}}|\varphi|^{2^*_{\alpha,\beta}}\,d\nu\bigg)^{1/{2^*_{\alpha,\beta}}}
\le\bigg(\int_{\R^{n+m}}\varphi^2\,d\nu\bigg)^{1/2},
\qquad \varphi\in C_c(\R^{n+m}).
\end{equation}
Let $\Omega\subset\R^{n+m}$ be open and pick $\varphi_j\in C_c(\Omega)$ with $0\le\varphi_j\le1$ and $\varphi_j\uparrow\mathbf 1_\Omega$ pointwise; monotone convergence in \eqref{eq:nu-localized-final} gives $\|\nu\|^{\frac12-\frac{1}{2^*_{\alpha,\beta}}}\nu(\Omega)^{1/{2^*_{\alpha,\beta}}}\le\nu(\Omega)^{1/2}$. If $\nu(\Omega)>0$ this reads $\|\nu\|^{\frac12-\frac{1}{2^*_{\alpha,\beta}}}\le\nu(\Omega)^{\frac12-\frac{1}{2^*_{\alpha,\beta}}}$, and since $2^*_{\alpha,\beta}>2$ and $\nu(\Omega)\le\|\nu\|$ we conclude $\nu(\Omega)=\|\nu\|$. Thus
\begin{equation}
\label{eq:zero-one-law}
\nu(\Omega)\in\{0,\|\nu\|\}\qquad\text{for every open }\Omega .
\end{equation}
Finally, $\operatorname{supp}\nu$ is a single point: if $z_1\ne z_2$ both belonged to it, we could choose disjoint open sets $\Omega_1\ni z_1$, $\Omega_2\ni z_2$, and \eqref{eq:zero-one-law} would force $\nu(\Omega_1)=\nu(\Omega_2)=\|\nu\|>0$, whence $\nu(\Omega_1\cup\Omega_2)=2\|\nu\|>\|\nu\|$, a contradiction. Writing $\operatorname{supp}\nu=\{z_0\}$ we get $\nu=\nu_0\delta_{z_0}$ with $\nu_0=\|\nu\|>0$.


Finally for the last part of the proof. Fix $z_0=(x_0,y_0)$ with $x_0\ne0$.  Put $c_0:=|x_0|/2>0$ and $\mathcal A:=\{|x-x_0|<c_0\}\times\{|y-y_0|<1\}$, so that
\begin{equation}
\label{eq:frozen-weight}
c_0\le|x|\le3c_0\qquad\text{for every }(x,y)\in\mathcal A .
\end{equation}
Fix $\chi\in C_c^\infty(\mathcal A)$ with $0\le\chi\le1$ and $\chi\equiv1$ on a smaller neighbourhood $\mathcal A'\Subset\mathcal A$ of $z_0$, and set $\varphi_k:=\chi w_k$. Then $\{\varphi_k\}$ is bounded in the \emph{non-degenerate} space $\mathcal D^{\alpha,2}_0(\R^{n+m})$, whose energy is that of the same family of operators with the parameter $\beta$ replaced by $0$,
\[
\mathcal E_{\alpha,0}(v)=[v]_{x,\alpha}^2
+\int_{\R^{n+m}}|\nabla_yv|^2\,dx\,dy .
\]
Indeed, by \eqref{eq:frozen-weight},
\[
\|\nabla_y\varphi_k\|_{L^2}
\le\|\chi\nabla_yw_k\|_{L^2}+\|w_k\nabla_y\chi\|_{L^2}
\le c_0^{-\beta}\big\||x|^{\beta}\nabla_yw_k\big\|_{L^2}
+\|\nabla_y\chi\|_{\infty}\|w_k\|_{L^2(\mathcal A)},
\]
which is bounded uniformly in $k$ by \eqref{eq:rk-L2bound}, while $[\chi w_k]_{x,\alpha}$ is bounded by the localization inequality established in the first part of the proof of Lemma~\ref{lem:ccp-cutoff}, together with $\|K_\chi\|_\infty<\infty$ and the boundedness of $\{w_k\}$ in $L^2(\mathcal A)$.

However, the homogeneous dimension attached to $(\alpha,0)$ is $Q_{\alpha,0}=n+\alpha m$, and $Q_{\alpha,0}>2\alpha$ always holds. Hence Theorem~\ref{thm:global} with $\beta=0$ give
\begin{equation}
\label{eq:frozen-sobolev}
\|v\|_{L^{2^*_{\alpha,0}}(\R^{n+m})}^2
\le S_{\alpha,0}(n,m)^{-1}\,\mathcal E_{\alpha,0}(v),
\qquad
2^*_{\alpha,0}=\frac{2Q_{\alpha,0}}{Q_{\alpha,0}-2\alpha}.
\end{equation}
Since $\beta>0$ and $m\ge1$ force $Q_{\alpha,0}=n+\alpha m<n+(\alpha+\beta)m=Q$, and $t\mapsto2t/(t-2\alpha)$ is strictly decreasing on $(2\alpha,\infty)$, we obtain
\begin{equation}
\label{eq:exponent-gap}
2^*_{\alpha,\beta}<2^*_{\alpha,0} .
\end{equation}
Consequently $\{\varphi_k\}$ is bounded in $L^{2^*_{\alpha,0}}(\R^{n+m})$ by \eqref{eq:frozen-sobolev}, and $\varphi_k\to0$ a.e.\ because $w_k\to0$ a.e. Since \eqref{eq:exponent-gap} places $2^*_{\alpha,\beta}$ strictly below $2^*_{\alpha,0}$, Lemma~\ref{lem:vitali-upgrade} applies with $r=2^*_{\alpha,\beta}$ and $q=2^*_{\alpha,0}$ and gives $\int_{\mathcal A}|\chi w_k|^{2^*}\,dz\to0$. Because $\chi\equiv1$ on $\mathcal A'$ this yields $\int_{\mathcal A'}|w_k|^{2^*}\,dz\to0$, and testing the weak-$*$ convergence $|w_k|^{2^*}dz\stackrel{*}{\rightharpoonup}\nu$ against $\psi\in C_c(\mathcal A')$ with $\psi\uparrow\mathbf1_{\mathcal A'}$ gives $\nu(\mathcal A')=0$. Covering $\R^{n+m}\setminus\Sigma$ by countably many such $\mathcal A'$ completes the proof.
\end{proof}

\begin{proof}[Proof of Theorem \ref{thm:existence-minimizer-alphabeta}]
Following as in \cite[Theorem~3.4]{alves_BN_grushin}, by using the Lemma \ref{lem:ccp-mixed-fractional} with the key fact that the measure $\nu$ concentrates only on the degenerate hyperplane $\Sigma$, we have our result.
\end{proof}

\section*{Declarations}
			\noindent\textbf{Ethical Approval.} Not applicable.

			\medskip
			\noindent\textbf{Competing interests.} The authors declare that they have no competing interests.

			\medskip
			\noindent\textbf{Authors contributions.} The authors contributed equally to this work.

			\medskip
			\noindent\textbf{Availability of data and materials.} Data sharing is not applicable to this article as no new data were created or analyzed in this study.

			\section*{Acknowledgements}
			\noindent The author, S. Malhotra (PMRF ID-1403266), acknowledges the support provided by the Ministry of Education, Government of India, under the Prime Minister's Research Fellows scheme.

\bibliographystyle{abbrv}
\bibliography{references1}

@article{alves_BN_grushin,
  author  = {Alves, C. O. and Gandal, S. and Loiudice, A. and Tyagi, J.},
  title   = {A {Br\'ezis--Nirenberg} type problem for a class of degenerate elliptic problems involving the {Grushin} operator},
  journal = {J. Geom. Anal.},
  volume  = {34},
  number  = {2},
  year    = {2024},
  pages   = {Paper No. 52, 41 pp.},
  doi     = {10.1007/s12220-023-01507-3}
}

@techreport{aronszajn_frac,
  author      = {Aronszajn, N.},
  title       = {Boundary values of functions with finite {Dirichlet} integral},
  institution = {University of Kansas},
  type        = {Technical Report},
  number      = {14},
  year        = {1955},
  pages       = {77--94},
  note        = {Studies in Eigenvalue Problems}
}

@article{balhara2019,
  author  = {Balhara, R.},
  title   = {{Hardy}'s inequality for the fractional powers of the {Grushin} operator},
  journal = {Proc. Indian Acad. Sci. Math. Sci.},
  volume  = {129},
  number  = {3},
  year    = {2019},
  pages   = {Paper No. 33, 25 pp.},
  doi     = {10.1007/s12044-019-0471-2}
}

@article{baouendi_phd_thesis,
  author  = {Baouendi, M. S.},
  title   = {Sur une classe d'op\'erateurs elliptiques d\'eg\'en\'er\'es},
  journal = {Bull. Soc. Math. France},
  volume  = {95},
  year    = {1967},
  pages   = {45--87}
}

@article{bebendorf_poincare,
  author  = {Bebendorf, M.},
  title   = {A note on the {Poincar\'e} inequality for convex domains},
  journal = {Z. Anal. Anwendungen},
  volume  = {22},
  number  = {4},
  year    = {2003},
  pages   = {751--756},
  doi     = {10.4171/ZAA/1170}
}

@article{bogdan_jakubowski,
  author  = {Bogdan, K. and Jakubowski, T.},
  title   = {Estimates of heat kernel of fractional {Laplacian} perturbed by gradient operators},
  journal = {Comm. Math. Phys.},
  volume  = {271},
  number  = {1},
  year    = {2007},
  pages   = {179--198}
}

@article{bogdan_sztonyk_knopova,
  author  = {Bogdan, K. and Sztonyk, P. and Knopova, V.},
  title   = {Heat kernel of anisotropic nonlocal operators},
  journal = {Doc. Math.},
  volume  = {25},
  year    = {2020},
  pages   = {1--54}
}

@article{bourgain_brezis_mironescu,
  author  = {Bourgain, J. and Brezis, H. and Mironescu, P.},
  title   = {Limiting embedding theorems for {$W^{s,p}$} when {$s\uparrow1$} and applications},
  journal = {J. Anal. Math.},
  volume  = {87},
  year    = {2002},
  pages   = {77--101},
  note    = {Dedicated to the memory of Thomas H. Wolff},
  doi     = {10.1007/BF02868470}
}

@book{brezis_book,
  author    = {Brezis, H.},
  title     = {Functional analysis, {Sobolev} spaces and partial differential equations},
  series    = {Universitext},
  publisher = {Springer},
  address   = {New York},
  year      = {2011},
  pages     = {xiv+599},
  isbn      = {978-0-387-70913-0}
}

@article{caffarelli_silvestre,
  author  = {Caffarelli, L. and Silvestre, L.},
  title   = {An extension problem related to the fractional {Laplacian}},
  journal = {Comm. Partial Differential Equations},
  volume  = {32},
  number  = {7--9},
  year    = {2007},
  pages   = {1245--1260},
  doi     = {10.1080/03605300600987306}
}

@article{cotsiolis_tavoularis,
  author  = {Cotsiolis, A. and Tavoularis, N. K.},
  title   = {Best constants for {Sobolev} inequalities for higher order fractional derivatives},
  journal = {J. Math. Anal. Appl.},
  volume  = {295},
  number  = {1},
  year    = {2004},
  pages   = {225--236},
  doi     = {10.1016/j.jmaa.2004.03.034}
}

@article{dambrosio,
  author  = {D'Ambrosio, L.},
  title   = {{Hardy} inequalities related to {Grushin} type operators},
  journal = {Proc. Amer. Math. Soc.},
  volume  = {132},
  number  = {3},
  year    = {2004},
  pages   = {725--734}
}

@article{di_nezza_palatucci_valdinoci,
  author  = {Di Nezza, E. and Palatucci, G. and Valdinoci, E.},
  title   = {Hitchhiker's guide to the fractional {Sobolev} spaces},
  journal = {Bull. Sci. Math.},
  volume  = {136},
  number  = {5},
  year    = {2012},
  pages   = {521--573}
}

@book{folland_book,
  author    = {Folland, G. B.},
  title     = {Real analysis: Modern techniques and their applications},
  series    = {Pure and Applied Mathematics (New York)},
  edition   = {2},
  publisher = {John Wiley \& Sons},
  address   = {New York},
  year      = {1999},
  pages     = {xvi+386},
  isbn      = {0-471-31716-0}
}

@article{franchi_gutierrez_wheeden,
  author  = {Franchi, B. and Guti\'errez, C. E. and Wheeden, R. L.},
  title   = {Weighted {Sobolev--Poincar\'e} inequalities for {Grushin} type operators},
  journal = {Comm. Partial Differential Equations},
  volume  = {19},
  number  = {3--4},
  year    = {1994},
  pages   = {523--604},
  doi     = {10.1080/03605309408821025}
}

@article{franchi_lanconelli_lambda_op_holder_cty,
  author  = {Franchi, B. and Lanconelli, E.},
  title   = {{H\"older} regularity theorem for a class of linear nonuniformly elliptic operators with measurable coefficients},
  journal = {Ann. Scuola Norm. Sup. Pisa Cl. Sci. (4)},
  volume  = {10},
  number  = {4},
  year    = {1983},
  pages   = {523--541}
}

@article{franchi_lanconelli_origin_lambda_operators,
  author  = {Franchi, B. and Lanconelli, E.},
  title   = {Une m\'etrique associ\'ee \`a une classe d'op\'erateurs elliptiques d\'eg\'en\'er\'es},
  journal = {Rend. Sem. Mat. Univ. Politec. Torino},
  year    = {1983},
  pages   = {105--114},
  note    = {Conference on linear partial and pseudodifferential operators (Torino, 1982)}
}

@article{franchi_lanconelli_lambda_op_harnack,
  author  = {Franchi, B. and Lanconelli, E.},
  title   = {An embedding theorem for {Sobolev} spaces related to nonsmooth vector fields and {Harnack} inequality},
  journal = {Comm. Partial Differential Equations},
  volume  = {9},
  number  = {13},
  year    = {1984},
  pages   = {1237--1264},
  doi     = {10.1080/03605308408820362}
}

@article{franchi_lu_wheeden,
  author  = {Franchi, B. and Lu, G. and Wheeden, R. L.},
  title   = {Representation formulas and weighted {Poincar\'e} inequalities for {H\"ormander} vector fields},
  journal = {Ann. Inst. Fourier (Grenoble)},
  volume  = {45},
  number  = {2},
  year    = {1995},
  pages   = {577--604},
  doi     = {10.5802/aif.1466}
}

@article{gagliardo_frac_space,
  author  = {Gagliardo, E.},
  title   = {Propriet\`a di alcune classi di funzioni in pi\`u variabili},
  journal = {Ricerche Mat.},
  volume  = {7},
  year    = {1958},
  pages   = {102--137}
}

@article{garofalo_nhieu,
  author  = {Garofalo, N. and Nhieu, D.-M.},
  title   = {Isoperimetric and {Sobolev} inequalities for {Carnot--Carath\'eodory} spaces and the existence of minimal surfaces},
  journal = {Comm. Pure Appl. Math.},
  volume  = {49},
  number  = {10},
  year    = {1996},
  pages   = {1081--1144}
}

@article{grushin_origin,
  author  = {Grushin, V. V.},
  title   = {A certain class of hypoelliptic operators},
  journal = {Mat. Sb. (N.S.)},
  volume  = {83(125)},
  year    = {1970},
  pages   = {456--473}
}

@article{hajlasz_koskela,
  author  = {Haj{\l}asz, P. and Koskela, P.},
  title   = {{Sobolev} met {Poincar\'e}},
  journal = {Mem. Amer. Math. Soc.},
  volume  = {145},
  number  = {688},
  year    = {2000},
  pages   = {x+101},
  doi     = {10.1090/memo/0688}
}

@book{heinonen_nonlinear_potential_book,
  author    = {Heinonen, J. and Kilpel\"ainen, T. and Martio, O.},
  title     = {Nonlinear potential theory of degenerate elliptic equations},
  series    = {Oxford Mathematical Monographs},
  publisher = {Clarendon Press, Oxford University Press},
  address   = {New York},
  year      = {1993},
  pages     = {vi+363},
  note      = {Oxford Science Publications},
  isbn      = {0-19-853669-0}
}

@article{jleliSamet2017,
  author  = {Jleli, M. and Samet, B.},
  title   = {Liouville-type theorems for an elliptic system involving fractional {Laplacian} operators with mixed order},
  journal = {Electron. J. Differential Equations},
  volume  = {2017},
  number  = {105},
  year    = {2017},
  pages   = {1--11}
}

@article{lieb_sharp,
  author  = {Lieb, E. H.},
  title   = {Sharp constants in the {Hardy--Littlewood--Sobolev} and related inequalities},
  journal = {Ann. of Math. (2)},
  volume  = {118},
  number  = {2},
  year    = {1983},
  pages   = {349--374},
  doi     = {10.2307/2007032}
}

@article{lions1985,
  author  = {Lions, P.-L.},
  title   = {The concentration-compactness principle in the calculus of variations. The limit case, part 1},
  journal = {Rev. Mat. Iberoamericana},
  volume  = {1},
  number  = {1},
  year    = {1985},
  pages   = {145--201}
}

@article{loiudice,
  author  = {Loiudice, A.},
  title   = {Semilinear subelliptic problems with critical growth on {Carnot} groups},
  journal = {Manuscripta Math.},
  volume  = {124},
  number  = {2},
  year    = {2007},
  pages   = {247--259}
}

@article{monti_sobolev_grushin,
  author  = {Monti, R.},
  title   = {{Sobolev} inequalities for weighted gradients},
  journal = {Comm. Partial Differential Equations},
  volume  = {31},
  number  = {10--12},
  year    = {2006},
  pages   = {1479--1504},
  doi     = {10.1080/03605300500361594}
}

@article{monti_morbidelli,
  author  = {Monti, R. and Morbidelli, D.},
  title   = {{Kelvin} transform for {Grushin} operators and critical semilinear equations},
  journal = {Duke Math. J.},
  volume  = {131},
  number  = {1},
  year    = {2006},
  pages   = {167--202}
}

@article{Pala-Pis-Cal,
  author  = {Palatucci, G. and Pisante, A.},
  title   = {Improved {Sobolev} embeddings, profile decomposition, and concentration-compactness for fractional {Sobolev} spaces},
  journal = {Calc. Var. Partial Differential Equations},
  volume  = {50},
  number  = {3--4},
  year    = {2014},
  pages   = {799--829},
  doi     = {10.1007/s00526-013-0656-y}
}

@article{payne_weinberger,
  author  = {Payne, L. E. and Weinberger, H. F.},
  title   = {An optimal {Poincar\'e} inequality for convex domains},
  journal = {Arch. Rational Mech. Anal.},
  volume  = {5},
  year    = {1960},
  pages   = {286--292},
  doi     = {10.1007/BF00252910}
}

@article{ponce_poincare,
  author  = {Ponce, A. C.},
  title   = {An estimate in the spirit of {Poincar\'e}'s inequality},
  journal = {J. Eur. Math. Soc. (JEMS)},
  volume  = {6},
  number  = {1},
  year    = {2004},
  pages   = {1--15}
}

@article{servadei_valdinoci,
  author  = {Servadei, R. and Valdinoci, E.},
  title   = {The {Brezis--Nirenberg} result for the fractional {Laplacian}},
  journal = {Trans. Amer. Math. Soc.},
  volume  = {367},
  number  = {1},
  year    = {2015},
  pages   = {67--102}
}

@article{slobodeckii1958generalized,
  author  = {Slobodecki{\i}, L. N.},
  title   = {Generalized {Sobolev} spaces and their application to boundary problems for partial differential equations},
  journal = {Leningrad. Gos. Ped. Inst. Ucen. Zap.},
  volume  = {197},
  year    = {1958},
  pages   = {54--112}
}

@book{stein_singular_book,
  author    = {Stein, E. M.},
  title     = {Singular integrals and differentiability properties of functions},
  series    = {Princeton Mathematical Series},
  volume    = {30},
  publisher = {Princeton University Press},
  address   = {Princeton, NJ},
  year      = {1970},
  pages     = {xiv+290}
}

@article{tricomi_grushin,
  author  = {Tricomi, F. G.},
  title   = {Sulle equazioni lineari alle derivate parziali di {$2^{o}$} ordine di tipo misto},
  journal = {Atti Accad. Naz. Lincei Mem. Cl. Sci. Fis. Mat. Natur. (5)},
  volume  = {14},
  year    = {1923},
  pages   = {133--247}
}

@article{marcus_mizel,
  author  = {Marcus, Moshe and Mizel, Victor J.},
  title   = {Complete characterization of functions which act, via
              superposition, on Sobolev spaces},
  journal = {Trans. Amer. Math. Soc.},
  volume  = {251},
  year    = {1979},
  pages   = {187--218}
}

\end{document}